\DeclareFontFamily{U}{mathx}{\hyphenchar\font45}
\DeclareFontShape{U}{mathx}{m}{n}{
      <5> <6> <7> <8> <9> <10>
      <10.95> <12> <14.4> <17.28> <20.74> <24.88>
      mathx10
      }{}
\DeclareSymbolFont{mathx}{U}{mathx}{m}{n}
\DeclareMathAccent{\widecheck}{0}{mathx}{"71}
\newcommand{\tc}[2]{{\textcolor{#1}{#2}}}
\newcommand{\lrp}[1]{\left(#1\right)}
\newcommand{\lrb}[1]{\left[#1\right]}
\newcommand{\lrbb}[1]{\llbracket{ #1 }\rrbracket }
\newcommand{\lrm}[1]{\left|#1\right|}
\newcommand{\lrc}[1]{\left\{#1\right\}}
\newcommand{\lra}[1]{\left\langle{#1}\right\rangle}
\newcommand{\eq}[2]{\begin{equation}\label{#2} \begin{split} #1  \end{split} \end{equation}}
\newcommand{\eqn}[1]{\begin{equation*} \begin{split} #1 \end{split} \end{equation*}}
\newcommand{\pd}[2]{\frac{\partial #1}{\partial #2}}
\newcommand{\pds}[2]{\frac{\partial{\phantom{#2}}}{\partial{#2}}\left( #1 \right)}
\newcommand{\Q}{\mathbb{Q} }
\newcommand{\R}{\mathbb{R} }
\newcommand{\C}{\mathbb{C} }
\newcommand{\A}{\mathbb{A} }
\newcommand{\PP}{\mathbb{P} }
\newcommand{\TT}{\mathbb{T} }
\newcommand{\Z}{\mathbb{Z} }
\newcommand{\gv}{\mathbf{g} }
\newcommand{\cv}{\mathbf{c} }
\newcommand{\vb}[1]{\mathbf{#1}}
\newcommand{\cA}{\mathcal{A} }
\newcommand{\cAp}{\mathcal{A}_{\mathrm{prin}} }
\newcommand{\cAps}[1]{\mathcal{A}_{\mathrm{prin},{#1}} }
\newcommand{\prin}{{\mathrm{prin}} }
\newcommand{\cX}{\mathcal{X} }
\newcommand{\ssO}{\mathcal{O} }
\newcommand{\uf}{\mathrm{uf} }
\newcommand{\tp}{\raisebox{-1.5ex}{\scalebox{2}{$\ulcorner$}} }
\newcommand{\bp}{\scalebox{2}{$\lrcorner$} }
\newcommand{\Xsp}{\widehat{\cX} }
\newcommand{\Xfam}{\mathscr{X} }
\newcommand{\Xfsp}{\widehat{\Xfam} }
\newcommand{\Xt}{\widetilde{X} }
\newcommand{\At}{\widetilde{A} }
\newcommand{\cham}{\mathcal{G}}
\newcommand{\sK}{\mathcal{K} }
\newcommand{\negphantom}[1]{\ifmmode\settowidth{\dimen0}{$#1$}\else\settowidth{\dimen0}{#1}\fi\hspace*{-\dimen0}}
\newcommand{\fg}{\mathfrak{g} }
\newcommand{\fp}{\mathfrak{p} }
\newcommand{\Nuf}{N_{\text{uf}}}
\newcommand{\Iuf}{I_{\text{uf}}}
\newcommand{\wall}{\mathfrak{d}}
\DeclareMathOperator{\Sing}{Sing}
\newcommand{\scat}{\mathfrak{D}}
\newcommand{\seed}{\textbf{s}}
\newcommand{\tf}{\vartheta}
\newcommand{\ubu}{{{\Iuf}\times{\Iuf}}}
\newcommand{\ubI}{{{\Iuf}\times{I}}}
\newcommand{\Bt}{\widetilde{B}}
\newcommand{\epshat}{\widehat{\epsilon}}
\newcommand{\qbz}{q_{\mathrm{BZ}}}
\newcommand{\qfg}{q_{\mathrm{FG}}}
\newcommand{\cc}{\Delta^+}
\newcommand{\gp}{\mathrm{gp}}
\DeclareMathOperator{\Sp}{span}
\DeclareMathOperator{\Hom}{Hom}
\DeclareMathOperator{\sgn}{sgn}
\DeclareMathOperator{\Spec}{Spec}
\DeclareMathOperator{\lcm}{lcm}
\theoremstyle{plain}
\newtheorem{theorem}{Theorem}[section]
\newtheorem{prop}[theorem]{Proposition}
\newtheorem{lemma}[theorem]{Lemma}
\newtheorem{cor}[theorem]{Corollary}
\theoremstyle{definition}
\newtheorem{definition}[theorem]{Definition}
\newtheorem{example}[theorem]{Example}
\theoremstyle{remark}
\newtheorem{remark}[theorem]{Remark}
\newenvironment{problem}[1]{ \flushleft \textcolor{blue}{\normalsize {#1}}}
\newenvironment{subprob}[1]{ \flushleft \textcolor{blue}{\normalsize {#1}}}
\newcommand{\etalchar}[1]{$^{#1}$}
\begin{document}

\renewcommand{\labelenumi}{(\arabic{enumi})}

\newcommand*{\equal}{=}
\newcommand*{\cequal}{{:}=}

\title{Quantization of deformed cluster Poisson varieties}
\author{Man-Wai Mandy Cheung, Juan Bosco Fr\'ias-Medina, and Timothy Magee}
\date{}

\maketitle

\begin{abstract}
    Fock and Goncharov described a quantization of cluster $\cX$-varieties (also known as {\it{cluster Poisson varieties}}) in \cite{FG_cluster_ensembles}.
    Meanwhile, families of deformations of cluster $\cX$-varieties were introduced in \cite{BFMNC}.
    In this paper we show that the two constructions are compatible-- we extend the Fock-Goncharov quantization of $\cX$-varieties to the families of \cite{BFMNC}.
    As a corollary, we obtain that these families and each of their fibers have Poisson structures.
    We relate this construction to the Berenstein-Zelevinsky quantization of $\cA$-varieties (\cite{BZquantum}).
    Finally, inspired by the counter-example to quantum positivity of the quantum greedy basis in \cite{Lee9712}, we compute a counter-example to quantum positivity of the quantum theta basis.
\end{abstract}

\textbf{Keywords:} Cluster varieties; Toric degenerations; Quantum cluster algebras; Poisson structure.


\section{Introduction}

Cluster varieties come in two flavors, $\cA$ and $\cX$.
Both types are a union of tori glued via a special class of birational maps, known as {\it{mutation maps}}.
The input of this construction is a collection of combinatorial data $\Gamma$ and the equivalence class $[\seed]$ of a seed $\seed$, which we review in Section~\ref{normalcluster}.
The output is a pair of schemes $\lrp{\cA_{\Gamma,[\seed]},\cX_{\Gamma,[\seed]}}$, together with maps $p:\cA_{\Gamma,[\seed]} \to \cX_{\Gamma,[\seed]}$.
Up to a certain rescaling of character lattices, the tori in the atlas of $\cA_{\Gamma,[\seed]}$ and the tori in the atlas of $\cX_{\Gamma,[\seed]}$ are dual to one another.
Replacing $\Gamma$ and $\seed$ with {\it{Langlands dual data}} $\Gamma^\vee$ and {\it{Langlands dual seed}} $\seed^\vee$ (reviewed in Section~\ref{normalcluster}) gives a new pair $\lrp{\cA_{\Gamma^\vee,[\seed^\vee]},\cX_{\Gamma^\vee,[\seed^\vee]}}$ where this duality is precise-- the tori in the atlas of $\cA_{\Gamma,[\seed]}$ are dual to the tori in the atlas of $\cX_{\Gamma^\vee,[\seed^\vee]}$, and the tori in the atlas of $\cX_{\Gamma,[\seed]}$ are dual to the tori in the atlas of $\cA_{\Gamma^\vee,[\seed^\vee]}$.

While closely related, $\cA$- and $\cX$-varieties differ in some important ways. $\cA$-varieties exhibit the {\it{Laurent phenomenon}} (\cite{FZ_clustersI}) and as a result come with a natural collection of linearly independent global regular functions ({\it{theta functions}}).
A consequence of this fact is that $\cA$-varieties naturally (partially) compactify with projective geometry constructions.
These constructions are reminiscent of the construction of (relative) projective toric varieties via convex polyhedra. (See \cite{CMNcpt}.)
On the other hand, $\cX$-varieties are generally non-separated and so cannot possibly be compactified in this way.
Instead, they come with an atlas naturally described in terms of a fan, and partially compactify in much the same way that a fan defines a toric variety partially compactifying a torus.
In fact, a single object ({\it{a scattering diagram}}) encodes both the theta functions on $\cA_{\Gamma,[\seed]}$ and this partial compactification (known as the {\it{special completion}}) $\Xsp_{\Gamma^\vee,[\seed^\vee]}$ of $\cX_{\Gamma^\vee,[\seed^\vee]}$.
The special completion was initially explored by Fock and Goncharov in \cite{FG_X}.

In both cases, the partial compactifications can be put into families by introducing coefficients to the mutations.
For $\cA$, these coefficients already appeared in the first work on cluster algebras \cite{FZ_clustersI}, and the resulting toric degenerations were explored in \cite{GHKK}.
For $\cX$, the coefficients and corresponding toric degenerations were given in \cite{BFMNC}.

Next, Fock and Goncharov describe in \cite{FG_cluster_ensembles} a quantization $\cX_q$ of the $\cX$-variety. 
In fact, they show that $\cX$-varieties have a Poisson structure-- one of their fundamental features-- by taking a semi-classical limit of $\cX_q$. 
Meanwhile, Berenstein and Zelevinsky address the quantization of $\cA$ varieties in \cite{BZquantum}.
The Berenstein-Zelevinsky $\cA$-quantization admits coefficients, and thus admits families of quantum $\cA$-varieties. 

In this paper we show that the quantization of cluster Poisson varieties due to Fock and Goncharov (\cite{FG_cluster_ensembles}) extends to the family of cluster Poisson varieties with coefficients of \cite{BFMNC}. 
To do so, we introduce coefficients to the quantum mutation formulas given by Fock and Goncharov in \cite[Section~3.3.1]{FG_cluster_ensembles} and obtain
\begin{prop}[\thref{prop:qdmutation}]
The quantum $\cX$-mutation with coefficients 
\[\mu_{k,\vb{t};\cham}^q: \sK\lrp{\A^{\lrm{I}}_{M;\cham',q}\lrp{R}} \to \sK\lrp{\A^{\lrm{I}}_{M;\cham,q}\lrp{R}} \]
is given in cluster coordinates by
\begin{equation*}
    \mu_{k,\vb{t};\cham}^q \lrp{\Xt_{i;\cham'}} = \begin{cases} \Xt_{i;\cham}^{-1} &\text{ if } i=k,\\
& \\
\Xt_{i;\cham} \lrp{\displaystyle\prod_{\ell=1}^{|\epsilon_{ik}|} \lrp{\vb{t}^{[\sgn{\lrp{\epsilon_{ik}}}\vb{c}_{k;\cham}]_+}+\vb{t}^{[-\sgn{\lrp{\epsilon_{ik}}}\vb{c}_{k;\cham}]_+} q_k^{2\ell-1} \Xt_{k;\cham}^{-\sgn{\lrp{\epsilon_{ik}}}}}}^{-\sgn{\lrp{\epsilon_{ik}}}}  &\text{ if } i\not =k \end{cases}.
\end{equation*}
Setting the coefficients $t_i$ to $1$, we recover the usual Fock-Goncharov quantum $\cX$-mutation.
Meanwhile, taking the quantum parameter $q$ to $1$, we recover the classical $\cX$-mutation formula with coefficients of \cite{BFMNC}.
\end{prop}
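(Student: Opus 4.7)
The plan is to define the candidate map $\mu_{k,\vb{t};\cham}^q$ by the stated formula on the quantum cluster generators $\Xt_{i;\cham'}$, extend multiplicatively, and then verify both that this gives a well-defined isomorphism of skew fields and that it specializes correctly at $\vb{t}=1$ and at $q=1$. The strategy is to combine the classical $\cX$-mutation with coefficients of \cite{BFMNC} with the Fock-Goncharov quantization of \cite[Section~3.3.1]{FG_cluster_ensembles}, treating the two as independent deformations of the same underlying lattice automorphism.

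First I would argue that the right-hand side is well-defined in $\sK\lrp{\A^{\lrm{I}}_{M;\cham,q}(R)}$. The coefficient monomials $\vb{t}^{\vb{c}_{k;\cham}}$ are central in the quantum $\cX$-algebra with coefficients, so each factor in the product over $\ell$ involves only $\Xt_{k;\cham}$ together with central coefficients. All factors therefore commute amongst themselves, the product is order-independent, and raising each factor to the power $-\sgn(\epsilon_{ik})$ makes sense in the skew field.

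Next, I would verify that this assignment extends to an $R$-algebra isomorphism. For this it suffices to check that the images satisfy the $q$-commutation relations of the source quantum torus at $\cham'$. For $i,j\ne k$ the image of $\Xt_{i;\cham'}$ is $\Xt_{i;\cham}$ times a function only of $\Xt_{k;\cham}$ and central coefficients, so the commutator of two images reduces to the conjugation of $\Xt_{i;\cham}$ past the $\Xt_{k;\cham}$-function in the image of $\Xt_{j;\cham'}$ and vice versa. The powers $q_k^{2\ell-1}$ are precisely those that cause the resulting telescoping to reproduce the mutated $q$-bilinear form in the uncoefficiented Fock-Goncharov argument; the insertion of $\vb{t}^{[\pm\sgn(\epsilon_{ik})\vb{c}_{k;\cham}]_+}$ does not alter the commutation at all because those coefficients are central. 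The case $i=k$ is immediate from $\Xt_{k;\cham}^{-1}$, and the mixed case $i=k$, $j\ne k$ follows from the same conjugation calculation applied once.

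Finally, the two specialization claims follow by direct substitution. Setting every $t_i=1$ collapses each factor in the product to $\lrp{1+q_k^{2\ell-1}\Xt_{k;\cham}^{-\sgn(\epsilon_{ik})}}$, exactly the Fock-Goncharov quantum $\cX$-mutation. Setting $q=1$ makes $q_k^{2\ell-1}=1$ for every $\ell$, so the product collapses to
\[\lrp{\vb{t}^{[\sgn(\epsilon_{ik})\vb{c}_{k;\cham}]_+}+\vb{t}^{[-\sgn(\epsilon_{ik})\vb{c}_{k;\cham}]_+}\Xt_{k;\cham}^{-\sgn(\epsilon_{ik})}}^{-\sgn(\epsilon_{ik})\lrm{\epsilon_{ik}}},\]
which standard manipulations of $[\cdot]_+$ and $\sgn$ rearrange into the classical $\cX$-mutation with coefficients of \cite{BFMNC}. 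The main obstacle is the homomorphism check in the middle step: one must verify carefully that the telescoping of $q$-shifted factors reproduces the correct mutated commutation relation in the presence of the central coefficient modifications, and keep track of how the choices of $[\pm\vb{c}_{k;\cham}]_+$ align with the classical limit. Because the coefficients are central, no fundamentally new quantum identity is required beyond the Fock-Goncharov argument; the work is essentially careful sign and exponent bookkeeping.
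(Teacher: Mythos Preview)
Your approach is valid but genuinely different from the paper's. The paper does not define $\mu_{k,\vb{t};\cham}^q$ by the stated formula and then verify the $q$-commutation relations; instead it \emph{defines} the map as a composition
\[
\mu_{k,\vb{t};\cham}^q := \mu_{k,\vb{t};\cham}^\sharp \circ \mu_{k,\vb{t};\cham}',
\]
where $\mu_{k,\vb{t};\cham}^\sharp$ is conjugation by the quantum dilogarithm $\vb{\Psi}_{q_k}\bigl(\vb{t}^{[\cv_{k;\cham}]_+}/\vb{t}^{[-\cv_{k;\cham}]_+}\,\Xt_{k;\cham}\bigr)$ and $\mu_{k,\vb{t};\cham}'$ is an explicit monomial change of basis scaled by a coefficient monomial. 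Both pieces are manifestly $*$-algebra homomorphisms (conjugation is inner, the monomial map is checked in one line), so well-definedness and the homomorphism property come for free. The formula in the proposition is then \emph{derived} by applying the quantum dilogarithm difference relation $\vb{\Psi}_q(q^{\pm 2}x)=(1+q^{\pm 1}x)^{\pm 1}\vb{\Psi}_q(x)$ to compute $\mu^\sharp$ on each generator (\thref{qmusharpAlt}) and composing with $\mu'$ in the two cases $\epsilon_{ik}\lessgtr 0$.

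Your route---define by formula, then check relations---would succeed, and your observation that the $\vb{t}$-coefficients are central so nothing quantum-theoretically new is needed is exactly right. But the paper's decomposition buys more: it makes the $*$-homomorphism property (\thref{prop:*hom}) transparent via $\vb{\Psi}_{q^{-1}}=\vb{\Psi}_q^{-1}$, it makes invertibility obvious, and it avoids the telescoping commutation check entirely. One small gap in your sketch: you assert the images satisfy the mutated $q$-commutation relations by appealing to ``the Fock-Goncharov argument,'' but Fock and Goncharov themselves use precisely the $\mu^\sharp\circ\mu'$ decomposition rather than a direct relations check, so you would actually have to carry out that telescoping computation yourself rather than cite it.
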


We further obtain a Poisson structure on the family $\Xfsp$ of \cite{BFMNC} and its fibers through a semi-classical limit: 
\begin{prop}[\thref{PFam}, \thref{PFib}] 
$\Xfsp$ is a Poisson scheme over $\C\lrb{t_i: i\in I}$, and its fibers over closed points are Poisson schemes over $\C$.
\end{prop}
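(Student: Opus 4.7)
The plan is to obtain the Poisson structure on $\Xfsp$ as the semi-classical limit of the quantum family assembled from Proposition~\ref{prop:qdmutation}, and then to check (i) compatibility with the gluing so that the local brackets globalize, and (ii) that the coefficients $t_i$ lie in the Poisson center, so the structure descends to fibers.

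On a single cluster chart indexed by a seed $\cham$, the quantum algebra built from the quantum $\cX$-mutations with coefficients is a noncommutative $\C[t_i: i\in I][q^{\pm 1}]$-algebra whose generators $\Xt_{i;\cham}$ $q$-commute via $\Xt_{i;\cham}\Xt_{j;\cham} = q^{2\lambda_{ij}^{\cham}}\Xt_{j;\cham}\Xt_{i;\cham}$ for an integer skew form $\lambda^{\cham}$, with each $t_i$ central. For any lifts $\tilde{f},\tilde{g}$ to the quantum algebra of classical elements $f,g$, the commutator $[\tilde{f},\tilde{g}]$ is divisible by $q - q^{-1}$ (this is visible on monomials from the $q$-commutation), and
\[ \sk{f}{g} := \left.\frac{[\tilde{f},\tilde{g}]}{q - q^{-1}}\right|_{q=1} \]
defines a bracket on the chart. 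Standard semi-classical-limit verifications yield bilinearity, skew-symmetry, Leibniz, and Jacobi, and centrality of the $t_i$ in the quantum algebra produces $\sk{t_i}{-} = 0$.

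For gluing, observe that $\mu_{k,\vb{t};\cham}^q$ from Proposition~\ref{prop:qdmutation} is an algebra homomorphism of the relevant Ore localizations, hence preserves commutators; dividing by $q - q^{-1}$ and setting $q = 1$ shows its semi-classical limit is a Poisson morphism between the two charts. The final sentence of Proposition~\ref{prop:qdmutation} identifies this limit with the classical $\cX$-mutation with coefficients from \cite{BFMNC}, which is exactly the gluing map used to construct $\Xfsp$. Therefore the local Poisson brackets agree on overlaps and assemble into a Poisson structure on $\Xfsp$ over $\C[t_i: i\in I]$. For the fiber statement, let $\tau \in \Spec \C[t_i: i\in I]$ be a closed point with maximal ideal $I_\tau$; since each $t_i - \tau_i$ is Poisson-central, $I_\tau$ is a Poisson ideal, so the bracket descends to $\ssO_{\Xfsp}/I_\tau$, equipping the fiber $\Xfsps{\tau}$ with a Poisson structure over $\C$.

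The main obstacle I anticipate is verifying carefully that the commutator-over-$(q-q^{-1})$ construction extends to the Ore localizations used in defining the quantum mutations and is independent of the choices of lifts. Once this is settled on torus charts, the globalization is a formal consequence of the algebra-homomorphism property of $\mu_{k,\vb{t};\cham}^q$ together with the classical-limit identification in Proposition~\ref{prop:qdmutation}, and the fiber statement is then immediate from Poisson-centrality of the $t_i$.
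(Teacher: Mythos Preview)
Your proposal is correct and follows essentially the same approach as the paper: both obtain the Poisson bracket on each affine patch as the semi-classical limit of the $q$-commutation relations, then use that the quantum mutation with coefficients is an algebra homomorphism whose $q\to 1$ limit is the classical $\cX$-mutation with coefficients (\thref{cor:recover}) to conclude that the local brackets patch globally; the fiber statement is then immediate (the paper states it as a corollary, while you spell out the Poisson-centrality of the $t_i$). The only cosmetic difference is normalization---the paper divides by $q-1$ rather than $q-q^{-1}$, yielding a harmless overall factor of $2$ in the bracket.
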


Finally, we show that this quantization of $\cX$-varieties with coefficients is compatible with the Berenstein-Zelevinsky quantization of $\cA$-varieties with coefficients.

\begin{prop}[\thref{prop:quantum-pstar}]
With the identification $\qfg= \qbz^{-\frac{1}{2}\lcm\lrp{d_i: i \in \Iuf}} $, 
a $p^*$-map on the level of character lattices of cluster tori induces a $*$\=/algebra homomorphism from the quantum $\cX$-torus algebra with coefficients to the quantum $\cA$-torus algebra with coefficients which commutes with mutation. 
That is, it induces a map from the Berenstein-Zelevinsky quantum $\cA$-variety with coefficients to the Fock-Goncharov $\cX$-variety with coefficients. 
\end{prop}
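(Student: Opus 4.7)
The plan is to verify the claim in three stages: define the quantum lift of the classical $p^*$ map to the coefficient-extended quantum cluster tori, check that this lift is a $*$\=/algebra homomorphism under the identification $\qfg = \qbz^{-\tfrac{1}{2}\lcm(d_i: i \in \Iuf)}$, and show that it commutes with the $\cX$- and $\cA$-mutations with coefficients.

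The first two stages are essentially linear algebra. The quasi-commutation in the quantum $\cX$-torus is encoded by the $\epsilon$-matrix, while that of the quantum $\cA$-torus is encoded by the compatible skew form $\Lambda$ of Berenstein-Zelevinsky. The compatible-pair condition on $(\Bt,\Lambda)$ is exactly what guarantees that the pullback along $p^*$ of the $\cX$-skew form agrees with the $\cA$-skew form after the rescaling dictated by $\qfg = \qbz^{-\tfrac{1}{2}\lcm(d_i)}$. Once quasi-commutation is matched, the bar involution $q \mapsto q^{-1}$ commutes with $p^*$ automatically, since $p^*$ is a monoid map on the character lattices and the $q$-reordering factors invert consistently on both sides. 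The coefficients $t_i$ are pulled back along $p^*$ to the appropriate frozen $\cA$-variables of the principal-coefficient construction, preserving the $*$-structure.

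The heart of the argument is the third stage. On generators, one must verify, for every unfrozen $k$,
\[p^*\bigl(\mu_{k,\vb{t};\cham}^{q,\cX}(\Xt_{i;\cham'})\bigr) = \mu_{k,\vb{t};\cham}^{q,\cA}\bigl(p^*(\Xt_{i;\cham'})\bigr).\]
The case $i=k$ reduces to $p^*(\Xt_{k;\cham'}) = p^*(\Xt_{k;\cham})^{-1}$, which follows from the classical identity of \cite{BFMNC} and the fact that $p^*(\Xt_k)$ is a monomial in the $\At_j$ for $j \neq k$, so no $q$-reordering obstruction arises. The case $i \neq k$ amounts to pushing the product of $|\epsilon_{ik}|$ coefficient-bearing binomials in \thref{prop:qdmutation} through $p^*$ and matching the result to the Berenstein-Zelevinsky mutation formula with coefficients. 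The conversion between the product of binomials on the $\cX$-side and the corresponding $q$-integer-weighted expression on the $\cA$-side is dictated by the standard $q$-identity $\prod_{\ell=1}^{N}(1+q^{2\ell-1}z) = \sum_\ell \binom{N}{\ell}_{q^2} q^{\ell^2} z^\ell$ with $N = |\epsilon_{ik}|$.

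The main obstacle is the coefficient bookkeeping in this last step. The classical ($q=1$) matching of the $\cX$- and $\cA$-mutations with coefficients was established in \cite{BFMNC}, and the uncoefficiented ($\vb{t}=1$) quantum matching is due to Fock-Goncharov together with Berenstein-Zelevinsky. What is new is the interaction between the coefficient refinement and the quantum refinement: tracing the $\vb{t}^{[\pm\vb{c}_k]_+}$-monomials through $p^*$ and confirming that they land on exactly the frozen monomials prescribed by the Berenstein-Zelevinsky mutation with coefficients. Once this is verified by direct calculation on generators, the proposition follows because both quantum mutations are algebra automorphisms of localizations of their respective torus algebras.
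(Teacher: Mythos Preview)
Your three-stage plan matches the paper's structure exactly: first pin down the $q$-identification via the compatible-pair relation, then check commutation with mutation on generators, splitting into $i=k$ and $i\neq k$. The paper carries this out by direct computation rather than invoking the $q$-binomial expansion you mention: for $i\neq k$ it separates the cases $\epsilon_{ik}<0$ and $\epsilon_{ik}>0$, writes $\mu_{k,\vb{t};\cham}^{q}\lrp{p^*\lrp{\Xt_{i;\cham'}}}$ as the $\epsilon_{ik}'$-th power of the Berenstein--Zelevinsky binomial, and then uses the $q$-commutation relation $\At^{f_{k;\cham'}}\At^{p^*(e_{k;\cham})} = \qbz^{-d_k^\vee}\At^{p^*(e_{k;\cham})}\At^{f_{k;\cham'}}$ to peel off the $\At^{f_{k;\cham'}}$ factors one at a time, converting the repeated binomial into a product of shifted binomials $\prod_{\ell}\lrp{\vb{t}^{[\pm\cv_k]_+} + \vb{t}^{[\mp\cv_k]_+}\qbz^{-\frac{2\ell-1}{2}d_k^\vee}\At^{\pm p^*(e_k)}}^{\mp 1}$ that visibly matches the $p^*$-image of the $\cX$-mutation formula. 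Your $q$-binomial identity would also work---it is the sum-side of the same manipulation---but the paper never expands into sums; it stays in product form throughout, which keeps the coefficient bookkeeping for $\vb{t}^{[\pm\cv_k]_+}$ transparent since those factors are central and simply ride along.
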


To complete the story, we highlight a key difference between the classical and quantum settings.
Specifically, inspired by an example of \cite{Lee9712} for the quantum greedy basis, we illustrate a wall in a rank two quantum scattering diagram whose scattering function is non-positive.
This leads to the failure of ``quantum positivity'' for the quantum theta basis, which is expected from the corresponding result for the quantum greedy basis in \cite{Lee9712}.

\subsection*{Structure of the paper}
Section~\ref{sec:background} is an extended background section, split into several parts.
First, we recall basic notions for (classical) cluster varieties and cluster varieties with principal coefficients in Section~\ref{normalcluster}. 
This is followed by a review of the formulation of scattering diagrams in Section~\ref{sec:scat}. 
We then move on to the quantum setting for both $\cX$ and $\cA$ cluster algebras in Section~\ref{sec:quantum}. 
We conclude the background section with a sketch of the construction of quantum scattering diagrams in Section~\ref{sec:q-scat}, where we also provide an example of a quantum scattering diagram which has wall function with a non-positive coefficient-- a key difference from the classical case. The detailed computation used in this example is provided in Appendix~\ref{app:wallx}.

In Section~\ref{sec:Xfamq}, we construct the quantization of the family $\Xfsp$.
For the convenience of the reader, we review some properties of the quantum affine algebra in Section~\ref{sec:qAff}.
The quantum mutation of the $\Xfsp$ family is introduced in Section~\ref{sec:qmut}. 

Section~\ref{sec:q-pstar} is devoted to relating quantum $\cX$-mutation with coefficients (Section~\ref{sec:Xfamq}) with Berenstein-Zelevinsky's quantum $\cA$-mutation and Fomin-Zelevinsky's $\cA$-mutation with coefficients.
We focus on the case of principal coefficients. 

Using the quantum family $\Xfsp_q$ and taking the classical limit $q\to 1$, we prove in Section~\ref{sec:PoisFromQ} that the family $\Xfsp$ has a Poisson structure, as do its fibers. 
Finally, for the reader who wishes to see a more elementary proof of this result, in Appendix~\ref{app:poisson} we provide a direct proof of the Poisson structure of the $\Xfsp$ family without passing through quantization.


\section{Background}\label{sec:background}
\subsection{Cluster varieties}\label{normalcluster}
Before going into the quantum setting, we recall the definition of cluster algebras and cluster varieties following \cite{GHK_birational} and \cite{GHKK}. 
We start with the \emph{fixed data} $\Gamma$:

\begin{itemize}
\setlength\itemsep{0em}
    \item a finite set $I$ of \emph{directions} with a subset of \emph{unfrozen directions} $\Iuf$; 
    \item a lattice $N$ of rank $ |I|$;
    \item a saturated sublattice $ \Nuf \subseteq N$ of rank $|I_{\text{uf}}|$;
    \item a skew-symmetric bilinear form $\lbrace \cdot , \cdot \rbrace: N\times N \to \Q$;
    \item a sublattice $N^{\circ}\subseteq N$ of finite index satisfying
    $\lbrace N_{\text{uf}}, N^{\circ}\rbrace \subset \Z  $ and $  \lbrace N,N_{\text{uf}}\cap N^{\circ}\rbrace \subset \Z$;
    \item a tuple of positive integers $(d_i:i\in I)$ with greatest common divisor 1;
    \item $M=\Hom(N,\Z)$ and $M^{\circ}=\Hom(N^{\circ},\Z)$.
\end{itemize}

Then a \emph{seed} is a tuple $\seed =(e_{i;\vb{s}}\in N :i\in I)$ such that $\{e_{i;\vb{s}}:i\in I\}$ is a basis of $N$, 
$\{ e_{i;\vb{s}} : i\in \Iuf\} $ is a basis of $\Nuf$, 
and $\{d_i e_{i;\vb{s}} : i\in I\}$ is a basis $N^{\circ}$.
Define the matrix $\hat{\epsilon}_{ij} = \{ e_{i;\vb{s}}, e_{j;\vb{s}} \}$.
The exchange matrix is defined as 
\eqn{\epsilon = (\epsilon_{ij}), \quad \text{where} \ \epsilon_{ij} := \{ e_{i;\vb{s}}, e_{j;\vb{s}} \} d_j.} 
We denote the dual basis to $\vb{s}$ by $\lrc{e_{i;\vb{s}}^*: i \in I}$, and we set $f_{i;\vb{s}} = d_i^{-1}e_{i;\vb{s}}^*$.
Observe that $\lrc{f_{i;\vb{s}}: i \in I}$ is a basis for $M^\circ$.
For $ r \in \R$, define $[r]_+ =\max (0,r)$. Given seed data \textbf{s} and $k \in \Iuf$, the mutation $\vb{s}'$ of $\vb{s}$ is a new basis
\begin{equation} \label{eq:xmutatelattice}
        e_{i;\vb{s}'} :=
\begin{cases}
    e_{i;\vb{s}} + [\epsilon_{ik}]_+ e_{k;\vb{s}} & \text{ for } i \neq k, \\
    -e_{k;\vb{s}} & \text{ for } i = k.
\end{cases}
\end{equation}
Note that $\{e_{i;\vb{s}'} : i \in \Iuf \}$ is still a basis for $\Nuf$ and $\lrc{d_i e_{i;\vb{s}'}:i\in I}$ is still a basis for $N^{\circ}$. 
The new basis $\lrc{ f_{i;\vb{s}'} :i \in I} $ of $ M^{\circ}$ is obtained as the dual of $\lrc{d_i e_{i;\vb{s}'}:i\in I}$.
We can now associate two tori
\begin{equation} \label{def:axtori}
    \cX_{\textbf{s}} = T_M = \Spec \C[N] \text{\qquad and \qquad} \cA_{\textbf{s}} = T_{N^{\circ}} = \Spec \C [M^{\circ}].
\end{equation}

Write $\lrc{X_i:i \in I}$ for the coordinates on $\cX_{\textbf{s}}$ corresponding to the basis vectors $\lrc{e_{i;\vb{s}}:i \in I}$, i.e. $X_i = X^{e_{i;\vb{s}}}$,
and similarly write $\lrc{A_i :i \in I}$ for the coordinates on $\cA_{\vb{s}}$ corresponding to the basis vectors $\lrc{f_{i;\vb{s}}: i\in I}$, i.e. $A_i = A^{f_{i;\vb{s}}}$. 
The coordinates $X_i, A_i$ are called {\it{cluster variables}}. 
These coordinates give identifications 
\[
\cX_{\vb{s}}  \rightarrow \lrp{\C^*}^{\lrm{I}} , \qquad \cA_{\vb{s}} \rightarrow \lrp{\C^*}^{\lrm{I}}. 
\]

We define the birational maps (``{\it{mutation maps}}'')
\[
\mu_k: \cX_{\vb{s}} \dashrightarrow \cX_{\vb{s}'},
 \quad
\mu_k: \cA_{\vb{s}} \dashrightarrow \cA_{\vb{s}'}.
\]
via pull-back of functions
\begin{equation*} 
    \mu_k^* \lrp{X^n} = X^n (1+ X^{e_{k;\vb{s}}} )^{-\lrc{n,e_{k;\vb{s}}}d_k}  \text{ for } n \in N, 
\end{equation*}
\begin{equation*}  
    \mu_k^* \lrp{A^m} = A^m (1+ A^{v_{k;\vb{s}}} )^{-\langle d_k e_{k;\vb{s}}, m \rangle}  \text{ for } m \in M^{\circ},
\end{equation*}
where $v_{k;\vb{s}} = \lrc{e_{k;\vb{s}}, \ \cdot\ } \in M^\circ$. 
In terms of cluster variables, these mutation maps may be rewritten as
\eq{ 
    \mu_k^*\lrp{X_{i;\vb{s}'}}= 
    \begin{cases}
        X_{k;\vb{s}}^{-1} & \text{for } i=k\\
        X_{i;\vb{s}} \lrp{1 + X_{k;\vb{s}}^{-\sgn(\epsilon_{ik})}}^{-\epsilon_{ik}} & \text{for } i \neq k
    \end{cases} 
}{eq:muX}
and
\eq{ 
    \mu_k^*\lrp{A_{i;\vb{s}'}}= 
    \begin{cases}
        A_{k;\vb{s}}^{-1}\lrp{ \prod_{j: \epsilon_{kj}>0} A_{j;\vb{s}}^{\epsilon_{kj}} + \prod_{j: \epsilon_{kj}<0}A_{j;\vb{s}}^{-\epsilon_{kj}} } & \text{for } i=k\\
        A_{i;\vb{s}} & \text{for } i \neq k
    \end{cases}. 
}{eq:muA}
The $\cA$ and $\cX$ {\it{cluster varieties}} associated to $\Gamma$ and the mutation equivalence class $[\seed]$ of $\seed$ are the schemes
\eqn{\cA_{\Gamma,[\seed]} := \bigcup_{\seed \in [\seed]} \cA_\seed / \sim, }
where tori are glued via the birational maps \eqref{eq:muA},
and
\eqn{\cX_{\Gamma,[\seed]} := \bigcup_{\seed \in [\seed]} \cX_\seed / \sim, }
where tori are glued via the birational maps \eqref{eq:muX}.
See \cite[Proposition 2.4]{GHK_birational} and the discussion following it for further details.
\begin{remark}
When there is no risk of confusion, we will generally drop the subscripts $\Gamma$ and $[\seed]$ simply write $\cA$ and $\cX$.
\end{remark}
The $\cA$- and $\cX$-{\it{cluster algebras}} are the spaces of regular functions $\Gamma(\cA, \mathcal{O}_{\cA})$ and $\Gamma(\cX, \mathcal{O}_{\cX})$ on these cluster varieties.

\begin{remark}
There is a notion of duality for the fixed data $\Gamma$ and seed $\seed$.
The {\it{Langlands dual fixed data}} $\Gamma^\vee$ is defined by setting
\eqn{I^\vee := I, \qquad \Iuf^\vee := \Iuf, \qquad N^\vee = N^\circ, \qquad \lrc{ \ \cdot\ ,\ \cdot\ }^\vee := \frac{1}{\lcm\lrp{d_i:i\in I}} \lrc{ \ \cdot\ ,\ \cdot\ },}  
\eqn{\lrp{N^\vee}^\circ := \lcm\lrp{d_i:i\in I} \cdot N, \; \text{ and} \qquad d_j^\vee:= \frac{\lcm\lrp{d_i:i\in I}}{d_j}.}
The {\it{Langlands dual seed}} is $\seed^\vee:= \lrp{d_i e_{i;\seed}: i\in I}$. 
Naturally, this Langlands dual data determines a new pair of cluster varieties: $\cX_{\Gamma^\vee,[\seed^\vee]}$ and $\cA_{\Gamma^\vee,[\seed^\vee]}$ are said to be the {\it Fock-Goncharov duals} of $\cA_{\Gamma,[\seed]}$ and $\cX_{\Gamma,[\seed]}$.
See \cite[Section~1.2.10]{FG_cluster_ensembles}.
\end{remark}

For both $\cA$ and $\cX$, the mutation relations may be generalized to allow coefficients in a semi-field.
This was treated for $\cA$ in \cite{FZ_clustersI} and for $\cX$ in \cite{BFMNC}.
Importantly, the $\cA$ and $\cX$ mutations with coefficients remain dual to one another in a precise sense: see \cite[Section~3.3]{BFMNC}.
An instance of this construction of particular interest-- and the setting of the article-- is the case of {\it{principal coefficients}}.
We review cluster varieties with principal coefficients in the following subsection.

\subsubsection{Cluster varieties with principal coefficients}\label{sec:prin}

We first treat principal coefficients for $\cA$-varieties following \cite{GHK_birational}.
Let $\Gamma$ be the fixed data and $\seed$ the seed data of a (coefficient-free\footnote{We distinguish frozen variables and coefficients, so $I$ may have frozen directions.  The distinction is not so important for $\cA$, but it is significant for $\cX$ and affects the notion of cluster duality.}) pair of cluster varieties $\lrp{\cA,\cX}$.
Now consider the lattice and skew-symmetric bilinear form 
\[
\widetilde{N}:=N\oplus M^{\circ}, \quad \{\lrp{n_1,m_1},\lrp{n_2,m_2} \}_{\prin}
:=\{n_1,n_2\}+\langle n_1,m_2\rangle - \langle n_2,m_1 \rangle.
\]
Take $\widetilde{I} = I \sqcup I $ with $\widetilde{I}_{\uf} $ the inclusion of $\Iuf$ into the first copy of $I$, $\widetilde{N}_{\uf} = \Nuf \subset \widetilde{N}$, $\widetilde{N}^{\circ}=N^{\circ} \oplus M$, 
and $\widetilde{d}_i = d_i$ for $i$ in either copy of $I$.
This defines new fixed data $\widetilde{\Gamma}$.
Meanwhile, from the seed $\vb{s}$ we obtain a seed $\widetilde{\vb{s}}$ as $\lrp{(e_{i;\vb{s}},0): i \in I}\cup \lrp{(0,f_{i;\vb{s}}): i \in I}$. 
This data defines an $\cA$-variety denoted $\cAp$ closely related to the original $\cA$-variety.
In fact, denoting ${\widetilde{A}}^{(0,f_{i;\vb{s}})}$ by $t_i$, 
$\cAp$ may be viewed as a family of deformations of $\cA$ over 
$\Spec\lrp{\C\lrb{t_i^{\pm 1}: i\in I}}$.
Moreover a choice of seed $\vb{s}$ determines an extension of $\cAp$ to a family $\cAps{\vb{s}}$ over $\Spec\lrp{\C\lrb{t_i: i\in I}}$.
Explicitly, mutation of $\cA$-variables takes the form
\eq{
\mu_k^* \lrp{\widetilde{A}_{i;\vb{s'}}} = 
    \begin{cases}
        \widetilde{A}_{k;\vb{s}}^{-1}\lrp{\vb{t}^{[\cv_{k;\vb{s}}]_+}\prod_{j: \epsilon_{kj}>0}\widetilde{A}_{j;\vb{s}}^{\epsilon_{kj}} + \vb{t}^{[-\cv_{k;\vb{s}}]_+}\prod_{j: \epsilon_{kj}<0}\widetilde{A}_{j;\vb{s}}^{-\epsilon_{kj}}} & \text{if } i = k\\
        \widetilde{A}_{i;\vb{s}} & \text{if } i\neq k
    \end{cases}.
}{eq:Aprinmu}
Here $\cv_{k;\vb{s}}$ is a {\it{$\cv$-vector}}.
If $I_1$ and $I_2$ are the first and second copies of $I$ in $\widetilde{I}$, then $\cv_{k;\vb{s}}$ is the $k^{\text{th}}$ row of the submatrix $\widetilde{\epsilon}_{I_1 \times I_2}$ of 
the exchange matrix $\widetilde{\epsilon}$.
A major result in cluster theory is that $\cv$-vectors are {\it{sign coherent}}, meaning all entries are either non-negative or non-positive. See \cite[Corollary~5.5]{GHKK}.
In writing $[\cv_{k;\vb{s}}]_+$ in \eqref{eq:Aprinmu}, we use this sign coherence. Explicitly, given a sign coherent element $\vb{x}$ of $\R^n$, we define 
\eqn{ [\vb{x}]_+ := \begin{cases}
    \vb{x} & \text{if the entries of $\vb{x}$ are positive}\\
    0 & \text{otherwise.}
\end{cases} 
}

Next, the $\cX$-mutation with principal coefficients defining the corresponding family of deformations $\Xfam$ of $\cX$ from \cite{BFMNC} is given by
\eq{
\mu_k^*\lrp{\Xt_{i;\vb{s}'}} = 
    \begin{cases} 
        \Xt_{i;\vb{s}}^{-1} &\text{for } i=k\\
        \Xt_{i;\vb{s}}\lrp{\vb{t}^{[\sgn(\epsilon_{ik})\vb{c}_{k;\vb{s}}]_+} + \vb{t}^{[-\sgn(\epsilon_{ik})\vb{c}_{k;\vb{s}}]_+}\Xt_{k;\vb{s}}^{-\sgn(\epsilon_{ik})}}^{-\epsilon_{ik}} &\text{for } i\not =k
    \end{cases}.
}{eq:Xfammu}
Note that $\Xfam$ is {\emph{not}} the scheme $\cX_{\mathrm{prin}}$ of \cite{GHKK}.
The difference stems from treatment of coefficients and frozen variables.
In the skew-symmetric case,\footnote{When the exchange matrix is only skew-symmetrizable, we also need to take the Langlands dual data to obtain the dual cluster variety with principal coefficients.}
$\Xfam$ is dual to $\cAp$ as cluster varieties over $\C\lrb{t_i^{\pm 1}:i \in I}$,
while $\cX_{\mathrm{prin}}$ is dual to $\cAp$ as cluster varieties over $\C$.  
See Section~\ref{sec:q-pstar} for a more in depth discussion of this distinction.

\begin{remark}
As mentioned previously, the mutation relations with principal coefficients \eqref{eq:Aprinmu} and \eqref{eq:Xfammu} are both special cases of more general mutation relations with coefficients in an arbitrary semifield $\PP$.
The $\cA$ mutation with coefficients in $\PP$ was described by Fomin and Zelevinsky in \cite{FZ_clustersI}, 
while the $\cX$-mutations with coefficients in $\PP$ was given in \cite{BFMNC}.
For reference, these more general relations take the form:
\eq{
\mu_k^* \lrp{\widetilde{A}_{i;\vb{s'}}} = 
    \begin{cases}
        \widetilde{A}_{k;\vb{s}}^{-1}\lrp{p_k^{+}\prod_{j: \epsilon_{kj}>0}\widetilde{A}_{j;\vb{s}}^{\epsilon_{kj}} + p_k^{-}\prod_{j: \epsilon_{kj}<0}\widetilde{A}_{j;\vb{s}}^{-\epsilon_{kj}}} & \text{if } i = k\\
        \widetilde{A}_{i;\vb{s}} & \text{if } i\neq k
    \end{cases}
}{eq:Acoefmu}
and
\eq{
\mu_k^*\lrp{\Xt_{i;\vb{s}'}} = 
    \begin{cases} 
        \Xt_{i;\vb{s}}^{-1} &\text{for } i=k\\
        \Xt_{i;\vb{s}}\lrp{p_k^{ \lrbb{\epsilon_{ik} } } + p_k^{\lrbb{-\epsilon_{ik} } }\Xt_{k;\vb{s}}^{-\sgn(\epsilon_{ik})}}^{-\epsilon_{ik}} &\text{for } i\not =k
    \end{cases},
}{eq:Xcoefmu}
where
\eqn{
p^+:=\dfrac{p}{p\oplus 1}, 
\;\;\;\; 
p^-:=\dfrac{1}{p\oplus 1}, 
\; \mathrm{and} \;\;\;\;
p^{\lrbb{x}}:= \begin{cases} p^{-} &\text{ if } x<0,\\
1 &\text{ if } x=0,\\
p^+ &\text{ if } x>0.\end{cases}
}

\end{remark}

\subsection{Scattering diagrams}\label{sec:scat}

The geometry and combinatorics of cluster varieties and their associated cluster algebras can be encoded using scattering diagrams, as formulated in \cite{GHKK}.
In this section, we review the definition of scattering diagrams over skew-symmetric Lie algebras as in \cite{KS14}, \cite{GHKK}, \cite{dtpaper}, \cite{davison2019strong}.
We then use this machinery to discuss quantum scattering diagrams in Section~\ref{sec:q-scat}.

Consider a finite rank lattice $L$ with a skew-symmetric form $\{ \cdot, \cdot \}$.
Let $L^{\vee}$ be the dual lattice. 
Choose a strictly convex rational polyhedral cone $\mathcal{C}$ in $L_{\R} = L \otimes \R$. 
Let $L^{\oplus} = \mathcal{C} \cap L$ and $L^+ = L^{\oplus} \setminus \{ 0\}$. 
Now let $\fg$ be an $L^+$-graded Lie algebra: 
\eq{
\fg = \bigoplus_{n \in L^+} \fg_n, \qquad \lrb{\fg_{n_1},\fg_{n_2}} \subset \fg_{n_1+n_2}.
}{eq:Lie} 
Assume $\fg$ is skew-symmetric with respect to $\{ \cdot , \cdot \} $, meaning $[ \fg_{n_1} , \fg_{n_2} ] = 0$ whenever $\{ n_1, n_2 \} = 0$.
Choose a linear function $d: L \rightarrow \Z$ such that $d(n) >0$ for all $n \in L^+$. 
Then define $\displaystyle{\fg^{> k} := \bigoplus_{d(n)>k} \fg_n}$, which is a Lie subalgebra of $\fg$,
and let $\fg^{\leq k} = \fg / \fg^{> k}$. 
For each $n \in L^+$, define $\displaystyle{\fg_{n}^{\parallel}:=  \bigoplus_{k \geq 1} \fg_{kn}}$.
We have the corresponding nilpotent, pronilpotent, and abelian algebraic groups
$G_k:= \exp \fg^{\leq k}$, 
$G := \exp \fg = \varprojlim G_k$, 
and $G_{n}^{\parallel}:= \exp(\fg_n^{\parallel}) \subset G$.

\begin{definition}
A {\it{wall}} in $L^{\vee}_{\R}$ over $\fg$ is a pair $(\wall,g_{\wall})$ such that 
\begin{itemize}
    \item $g_{\wall}\in G_{n_{\wall}}^{\parallel}$ for some primitive $n_{\wall}\in L^+$.  
    \item $\wall \subset L^{\vee}_{\R}$ is a closed, convex (but not necessarily strictly convex), rational-polyhedral, codimension-one cone in $L^{\vee}_{\R}$ contained in $n_{\wall}^{\perp}$.  We call $\wall$ the {\it{support}} of the wall.
\end{itemize}
The wall $(\wall, g_{\wall})$ is said to be {\it{incoming}} if $\lrc{n_\wall, \ \cdot \ } \in \wall$.
Otherwise, it is said to be {\it{outgoing}}.

A {\it{scattering diagram}} $\scat$ over $\fg$ is a set of walls in $L^{\vee}_{\R}$ over $\fg$ such that for each $k >0$, there are only finitely many $(\wall,g_{\wall})\in \scat$ with $g_{\wall}$ not $1$ in $G_k$.   
The {\it{singular locus}} of the scattering diagram is defined as
	\eqn{\Sing (\scat) = {\bigcup_{(\wall,g) \in \scat} \partial \wall} \quad \cup {\bigcup_{\substack{(\wall_1,g_1), (\wall_2,g_2) \in \scat \\  \dim \wall_1 \cap \wall_2 = n-2}} \wall_1 \cap \wall_2}.} 
\end{definition}

Now consider a smooth immersion $\gamma: [0,1] \rightarrow L^{\vee}_{\R} \setminus \Sing(\scat)$, with endpoints not contained in the support of $\scat$, that only crosses walls transversely.
For each $k>0$, $\gamma$ will cross only a finite number $s_k$ of walls $(\wall,g_{\wall})$ with $g_{\wall}$ not $1$ in $G_k$, and we label them in order of crossing by
$(\wall_i, g_{\wall_i})$, $i = 1 , \dots, s_k$. 
Let $t_i$ be the time at which $\gamma$ crosses the wall $\wall_i$.
For this $k$, we define
\[
\fp^k_{\gamma} = g_{\wall_{s_k}}^{\sgn (\langle n_{\wall_{s_k}}, -\gamma'(t_{s_k}) \rangle)}  \cdots 
g_{\wall_{1}}^{\sgn (\langle n_{\wall_1}, -\gamma'(t_1) \rangle)}, 
\]
and taking the limit $k \to \infty$ we define the {\it{path-ordered product}} 
$	\fp_{\gamma} =\displaystyle{ \lim_{k \rightarrow \infty} \fp ^k_{\gamma} \in {G}}$. 
Note that if $(\wall_1,g_{\wall_1})$ and $(\wall_2,g_{\wall_2})$ are two walls of $\scat$ with $\text{codim}_{L^{\vee}_{\R}}(\wall_1\cap \wall_2)=1$, then by skew symmetry of $\fg$ with respect to $\{ \cdot , \cdot \} $, we have $[g_{\wall_1},g_{\wall_2}]=0$.
As a result, the previous definition does not depend on how we order walls which are crossed simultaneously.

Two scattering diagram $\scat$ and $\scat'$ are said to be \emph{equivalent} if $\fp_{\gamma, \scat} = \fp_{\gamma, \scat'}$ for all paths $\gamma$ for which both are defined.
A scattering diagram is \emph{consistent} if $\fp_{\gamma, \scat}$ only depends on the endpoints of $\gamma$ for any path $\gamma$ for which $\fp_{\gamma, \scat}$ is defined.

Versions of the following fundamental result have been proved in various contexts and degrees generality by various authors.

\begin{prop}[\cite{KS06}, \cite{GS11},  \cite{KS14}, \cite{GHKK}, \cite{Bri17}] \thlabel{thm:ks}
Let $\fg$ be a skew-symmetric $L^+$-graded Lie algebra, and let $\scat_{\mathrm{in}}$ be a finite scattering diagram 
whose walls are of the form $(n_i^{\perp},g_i)$ for  primitive $n_i\in L^+$. 
Then there is a unique-up-to-equivalence scattering diagram $\scat$ such that $\scat$ is consistent, $\scat \supset \scat_{\mathrm{in}}$, and 
$\scat \setminus \scat_{\mathrm{in}}$ contains only outgoing walls.
\end{prop}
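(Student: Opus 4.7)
The plan is to build $\scat$ as a limit of finite-order approximations. Specifically, I would construct an inverse system of scattering diagrams $\scat^{(k)}$, each living in $G_k = \exp \fg^{\leq k}$ and consistent there, with $\scat^{(k)} \supset \scat_{\mathrm{in}} \bmod \fg^{>k}$ and $\scat^{(k)} \setminus \scat_{\mathrm{in}}$ consisting only of outgoing walls. Since $G = \varprojlim G_k$, such a system produces the desired $\scat$. The finiteness condition in the definition of a scattering diagram is automatic at each stage, because only finitely many walls contribute non-trivially in any fixed $G_k$.

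For the base case, choose $k_0 < \min_i d(n_i)$: modulo $\fg^{>k_0}$ every $g_i$ is trivial, so the empty scattering diagram suffices. For the induction step, assume $\scat^{(k)}$ has been built. Let $\scat'$ be obtained from $\scat^{(k)}$ by enhancing each incoming wall $(n_i^\perp, g_i)$ to the next order modulo $\fg^{>k+1}$. Then $\scat'$ may fail to be consistent modulo $\fg^{>k+1}$; the defect is measured by the path-ordered product $\fp_\gamma$ around small loops $\gamma$ encircling the codimension-two joints of $\Sing(\scat')$. By the inductive hypothesis $\fp_\gamma$ vanishes modulo $\fg^{>k}$, so the residual defect lies in $\bigoplus_{d(n)=k+1} \fg_n$, which by the skew-symmetry hypothesis on $\fg$ is abelian. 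We then cancel the defect joint-by-joint by adding outgoing walls $(\wall,\exp(-c_n))$, where $\wall$ is a half-hyperplane perpendicular to $n$ emanating from the joint and $c_n$ is the $\fg_n$-component of the defect.

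The hard part is verifying that these local cancellations glue into a globally consistent $\scat^{(k+1)}$ and that the freshly added outgoing walls do not themselves disturb order-$(k+1)$ consistency. Following the strategy of \cite{KS14, GHKK}, I would work one joint $J$ at a time and slice $L^\vee_\R$ by a plane transverse to $J$, reducing to a rank-two scattering problem inside the Lie subalgebra generated by the graded pieces $\fg_n$ for $n$ in the saturation of the directions meeting at $J$. In this rank-two setting, Baker-Campbell-Hausdorff makes the defect explicit as a finite sum $\sum c_n$ with $c_n \in \fg_n$, and for each primitive $n$ a single outgoing wall in direction $n^\perp$ suffices to cancel $c_n$; the skew-symmetry hypothesis guarantees that these new outgoing walls commute modulo $\fg^{>k+1}$, so no second-order corrections at this order are required. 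Uniqueness is immediate from the construction: the incoming walls are fixed by $\scat_{\mathrm{in}}$, and at each order the outgoing walls are forced by the cancellation requirement. Passing to the inverse limit produces the unique consistent $\scat$.
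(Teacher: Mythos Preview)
The paper does not actually prove this proposition. It is stated with attribution to \cite{KS06, GS11, KS14, GHKK, Bri17}, and the paragraph following the statement only surveys what each reference contributes; no argument is given in the paper itself. So there is nothing to compare your proposal against on the paper's side.

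That said, your sketch is the standard order-by-order inductive construction found in those references (especially \cite{KS14} and \cite{GHKK}): build consistent diagrams in $G_k$, measure the defect of consistency at each joint after passing to $G_{k+1}$, observe that the defect lies in the abelian piece $\bigoplus_{d(n)=k+1}\fg_n$, and cancel it by outgoing walls. The outline is correct, though as written it is a sketch rather than a proof: the step ``verifying that these local cancellations glue into a globally consistent $\scat^{(k+1)}$'' is precisely where the work lies, and you have flagged it but not carried it out. If you intend this as a citation-and-sketch, it is fine; if you intend it as a self-contained proof, you would need to supply the rank-two reduction and the global gluing argument in detail.
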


Briefly, \cite{KS06} treats the two dimensional case, \cite{GS11} deals with scattering diagrams of arbitrary dimension in a logarithmic geometry context, \cite{KS14} elaborates the construction of general scattering diagrams over skew-symmetric Lie algebras and is the source for the formulation of \thref{thm:ks} we have provided, \cite{GHKK} develops scattering diagrams for cluster varieties, and \cite{Bri17} considers scattering diagrams over skew-symmetric Lie algebras as well and takes Hall algebras to obtain stability scattering diagrams.

\subsubsection{Cluster scattering diagrams}\label{sec:cluster-scat}

Restricting to the cluster setting and following \cite[Section~1]{GHKK},
consider the fixed data $\Gamma$ reviewed in Section~\ref{normalcluster}.
We need to use $\Gamma$ and a choice of seed $\seed= \lrp{e_i: i \in I}$ to construct a graded Lie algebra as in \eqref{eq:Lie}.

Naturally, the lattice $N$, which comes equipped with the skew-form $\lrc{\cdot , \cdot }$,  replaces the lattice $L$ from the general discussion. 
Now define
\eq{N^+_\seed := \lrc{ \sum_{i\in \Iuf} a_i e_i :  a_i\geq 0, \sum_{i\in \Iuf} a_i >0  } .}{eq:N+} 
This plays the role of $L^+$ from the general discussion. 

Define the map 
\eqn{
p_1^*: \Nuf &\rightarrow M^\circ\\  
 n &\mapsto \{ n,\ \cdot\ \}. 
}
In order to construct the cluster scattering diagram, \cite{GHKK} require $p_1^*$ to be injective.\footnote{
This is known as the {\it{injectivity assumption}}.
While it may seem limiting at first glance, this always holds after introducing principal coefficients, which we reviewed in Section~\ref{sec:prin}.
\cite{GHKK} use $\cAp$ to draw conclusions about $\cA$ (which is a fiber of $\cAp$) and $\cX$ (which is a quotient of $\cAp$ by a torus action).}
Using the injectivity assumption, we can choose a strictly convex top-dimensional
cone $\sigma \subseteq M^\circ_{\R}$
with associated monoid $P:= \sigma \cap M^{\circ}$ such that $p_1^*(e_i) \in P \setminus\{0\}$ for all $i \in \Iuf$.
Denote by $\widehat{\C[P]}$ the completion of $\C[P]$ with respect to the monomial ideal generated by $P\setminus\lrc{0}$.
Now consider the Lie algebra $\C [P] \otimes_{\Z} N^{\circ}$, with Lie bracket given by
\eqn{\lrb{A^{m_1}\otimes n_1, A^{m_2}\otimes n_2} = A^{m_1+m_2} \otimes \lrp{\lra{n_1,m_2}n_2 - \lra{n_2,m_1}n_1 }.}
Let it act on $\C [P]$ as follows:
$(f \otimes n ) (A^m) = { \langle n , m \rangle } f A^m$. 
Define $\fg_n $ to be the one-dimensional subspace of $\C [P] \otimes_{\Z} N^{\circ}$ spanned by $A^{p_1^*(n)}\otimes n$. 
Then the Lie subalgebra $\fg\subset \C [P] \otimes_{\Z} N^{\circ}$ is defined to be 
\eqn{\fg := \bigoplus_{n \in N^+_\seed} \fg_n.} 
Observe that $\fg$ is skew-symmetric with respect to $\lrc{\cdot,\cdot}$.
We can again choose a linear function $d:N \to \Z$ such that $d(n) >0$ for all $n\in N^+_\seed$.
We then define $\fg^{>k}$, $\fg^{\leq k}$, $\fg_{n}^{\parallel}$, and the corresponding Lie groups $G_k$, $G$, and $G_n^\parallel$ precisely as before.

All that remains is to give an initial scattering diagram.
For $n\in N^+_\seed$ and $f\in \widehat{\C[P]}$ of the form $${f= 1 + \sum_{k=1}^{\infty} c_k A^{k p_1^*(n)}},$$
define $\fp_f \in G_n^\parallel$ by
\eqn{\fp_f\lrp{A^m} = f^{\lra{n',m}} A^m,}
where $n'$ is the generator of $\R_{\geq 0} \cdot n \cap N^\circ$.
Assuming the injectivity assumption is satisfied (so the scattering diagram may be defined), 
the initial walls of the cluster scattering diagram are 
\eq{
\lrc{\lrp{ e_i^{\perp} , \fp_{f_{\wall_i}} } :\  i \in \Iuf,\, f_{\wall_i} = 1+A^{ p_1^*\left( e_i \right)}}.
}{eq:initial_scat}
For instance, the standard example is the scattering diagram associated to the $A_2$ quiver $v_1 \rightarrow v_2$.
This is shorthand for the following the skew-symmetric type\footnote{Meaning, the positive integers $d_i$ are all 1.} data without frozen directions: 
\begin{itemize}
    \item $N\cong \Z^2$,
    \item $\seed=\lrp{e_{1;\seed},e_{2;\seed}}$,
    \item $\lrc{e_{1;\seed},e_{2;\seed}} =1$.
\end{itemize}   
The initial scattering diagram described in \eqref{eq:initial_scat} is shown on the left side of Figure~\ref{fig:A2Scat},
with the associated consistent scattering diagram of \thref{thm:ks} on the right.


\noindent
\begingroup
\captionsetup{type=figure}
\begin{center}
\begin{tikzpicture}[scale=.9]

    \def\l{3}
    \def\d{1}
    \def\op{.5}

    \path (-\l,0) coordinate (3) --++ (\l,0) coordinate (0) --++ (\l,0) coordinate (1);
    \path (0,\l) coordinate (2) --++ (0,-2*\l) coordinate (4);
    \path (0,0) --++ (-3*\d,2*\d) coordinate (5);

    \draw[thick,->] (3) -- (1);
    \draw[thick,->] (2) -- (4);

    \node at (-.65*\l,-.5) {$1+A_{1}^{-1}$};
    \node at (.3*\l,.85*\l) {$1+A_2$};

\begin{scope}[xshift=8cm]

    \def\l{3}
    \def\d{1}
    \def\op{.5}

    \path (-\l,0) coordinate (3) --++ (\l,0) coordinate (0) --++ (\l,0) coordinate (1);
    \path (0,\l) coordinate (2) --++ (0,-2*\l) coordinate (4);
    \path (0,0) --++ (\l,-\l) coordinate (5) node [pos=.5, sloped, above]  {$1+A_1^{-1} A_2$};

    \draw[thick,->] (3) -- (1);
    \draw[thick,->] (2) -- (4);
    \draw[thick,->] (0) -- (5);

    \node at (-.65*\l,-.5) {$1+A_{1}^{-1}$};
    \node at (.3*\l,.85*\l) {$1+A_2$};
    
\end{scope}
\end{tikzpicture}.
\captionof{figure}{\label{fig:A2Scat} The initial $A_2$ scattering diagram is shown on the left, and the associated consistent scattering diagram on the right.} 
\end{center}
\endgroup


As the injectivity assumption is always satisfied after introducing principal coefficients, we can always work on the level of $\cAp$ to write:
\[
\scat_{\mathrm{in}, \seed}^{\cAp} := \lrc{\left( (e_i, 0)^{\perp} , \fp_{f_{\wall_i}} \right) : \  i \in \Iuf,\,  f_{\wall_i}= 1+\At^{ p_1^*\left( (e_i,0) \right) }}. 
\]

By \thref{thm:ks}, there exists a consistent scattering diagram $\scat^{\cAp}$ containing $\scat_{\mathrm{in}, \seed}^{\cAp} $. 
This is the cluster scattering diagram for $\cAp$. 
\cite[Construction~7.11]{GHKK} gives precisely how this is related to $\cA$ and $\cX$, allowing the recovery of a scattering diagram for $\cX$ as a slice of the $\cAp$ scattering diagram and a similar combinatorial structure\footnote{In general the ``scattering functions'' of this structure may not have the same form as scattering functions of a scattering diagram.} for $\cA$ whose support is a quotient of the support of the $\cAp$ scattering diagram.  
An overview appears in \cite[Section 2.2.1 and 2.2.2]{CMNcpt}.

The following result about cluster scattering diagrams plays an important role in this paper.
\begin{theorem}[{\cite[Lemma~2.10]{GHKK}}]
Let $\Gamma$ be fixed data for which the injectivity assumption is satisfied, fix an initial seed $\seed$,
let $\scat$ be the associated cluster scattering diagram,
and let $M^\circ_\R$ be the ambient space of the support of $\scat$.
Then $\scat$ induces a chamber decomposition of a full dimensional subset of $M^\circ_\R$,
yielding a (usually infinite) simplicial generalized\footnote{The cones are convex, but not strictly convex if there are frozen directions.} fan $\cc$  whose maximal cones are in 1-1 correspondence with clusters. 
\end{theorem}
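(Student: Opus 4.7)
The plan is to construct the chamber decomposition explicitly via $g$-vectors attached to reachable seeds, and then verify that each facet of each such chamber coincides with a wall of $\scat$ on which the mutation map is induced.

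First, I would associate to each seed $\seed'$ reachable from the initial seed $\seed$ a simplicial cone $\chamdual^+_{\seed'} \subset M^\circ_\R$, defined as the $\R_{\geq 0}$-span of the $g$-vectors of the cluster variables of $\seed'$ (equivalently, the image of the positive orthant under the composition of tropicalized mutations taking $\seed$ to $\seed'$). For $\seed$ itself, the cone is simply $\R_{\geq 0}\lrc{f_{i;\seed}:i\in I}$, whose codimension-one faces are exactly the hyperplanes $e_{i;\seed}^\perp$ supporting the initial walls \eqref{eq:initial_scat}.

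Next, I would argue inductively on mutation distance from $\seed$. Assume that $\chamdual^+_{\seed'}$ has been produced and that each of its facets either is one of the supports $e_{k;\seed'}^\perp$ or lies in the support of $\scat$. Mutation at $k$ tropicalizes to a piecewise-linear map that is linear on each of the two half-spaces determined by $e_{k;\seed'}^\perp$; applying this to $\chamdual^+_{\seed'}$ across the $k$-facet produces $\chamdual^+_{\mu_k(\seed')}$, another simplicial cone in $M^\circ_\R$. Consistency of $\scat$ (\thref{thm:ks}) combined with the explicit form of the initial scattering functions $1 + A^{p_1^*(e_{k;\seed'})}$ implies that crossing this facet in $\scat$ yields exactly the tropical $k$-mutation, so the new cone is again glued along facets contained in $\scat$. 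This is the standard inductive construction of the cluster complex inside the scattering diagram.

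Then I would check that the cones $\chamdual^+_{\seed'}$ indexed by distinct reachable seeds have disjoint interiors, so that their union is a generalized simplicial fan $\cc$ whose maximal cones biject with clusters. The essential input here is sign coherence of the $g$-vectors of a single seed (a consequence of the positivity of the scattering functions in $\scat$ built via \thref{thm:ks}), which guarantees that each $\chamdual^+_{\seed'}$ is a full-dimensional simplicial cone, and that two seeds yielding the same chamber must coincide as labeled seeds when restricted to unfrozen indices. The frozen directions contribute lineality to each cone, which is why $\cc$ is only a \emph{generalized} fan when $I \setminus \Iuf \neq \emptyset$.

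The main technical obstacle is precisely this last step: ruling out overlaps between $\chamdual^+_{\seed'}$ and $\chamdual^+_{\seed''}$ for distinct reachable seeds, and identifying all of these cones as maximal cones of the fan induced by $\scat$. In GHKK this is handled by a careful analysis of the mutation invariance of the scattering diagram and sign coherence; in the generality stated here, where one works with $\cAp$ to satisfy the injectivity assumption, the proof passes through the slice/quotient correspondence of \cite[Construction~7.11]{GHKK} relating the $\cAp$, $\cA$, and $\cX$ scattering structures.
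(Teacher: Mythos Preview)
The paper does not prove this statement at all: it is quoted verbatim as \cite[Lemma~2.10]{GHKK} and used as background, with no argument supplied. So there is no ``paper's own proof'' to compare your proposal against.

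That said, your sketch is a reasonable outline of the GHKK argument. A few remarks on accuracy: in GHKK the chambers $\chamdual^\pm_{\seed'}$ are not defined directly as spans of $g$-vectors and then shown to lie in $\scat$; rather, the key technical input is the \emph{mutation invariance} of the scattering diagram (GHKK Theorem~1.24), which shows that after applying the piecewise-linear mutation map $T_k$ the transformed diagram $T_k(\scat_\seed)$ is equivalent to $\scat_{\mu_k(\seed)}$. The chambers are then identified as the images of the positive and negative orthants under iterated $T_k$'s, and disjointness of interiors follows because each chamber is cut out by the walls of $\scat$ itself (so distinct chambers are separated by walls with nontrivial scattering functions). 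Your inductive picture captures this in spirit, but the logical order is reversed: one does not first build the cones and then check they sit in $\scat$; one proves mutation invariance of $\scat$ and reads off the chamber structure as a consequence. Also, your appeal to \cite[Construction~7.11]{GHKK} for the ``main technical obstacle'' is misplaced---that construction is about passing between $\cAp$, $\cA$, and $\cX$ scattering diagrams, not about establishing the chamber structure in the first place.
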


\subsubsection*{Fan structure and cluster varieties}

The generalized fan $\cc$ is known as the {\it{cluster complex}}. 
If there are no frozen directions, the rays of $\cc$ correspond precisely to cluster variables.
Each ray is generated by the {\it{$\gv$-vector}} of a cluster variable, a notion which we will now review.
Let $p_2^*: N \to M^\circ/ \Nuf^\perp$ be the lattice map defined by 
\eqn{p_2^*(n) = \left.\lrc{n, \ \cdot\ }\right|_{\Nuf\cap N^\circ},}
and denote the projection $ M^\circ \to M^\circ/ \Nuf^\perp$ by $\pi$.
Now let $p^*:N \to M^\circ $ be any lattice map satisfying
\begin{itemize}
    \item $\left.p^*\right|_{\Nuf} = p_1^*$;
    \item $\pi \circ p^* = p_2^*$.
\end{itemize}
The inclusion 
\eqn{
N^\circ &\hookrightarrow N^\circ \oplus M\\
n &\mapsto \lrp{n, p^*(n)}
}
induces a $T_{N^\circ}$ action on $\cAp$.
After fixing an initial seed $\vb{s_0}$, every cluster variable $A_{i;\vb{s}}$ on $\cA$ has a canonical extension $\widetilde{A}_{i;\vb{s}}$ to $\cAp$,
and $\widetilde{A}_{i;\vb{s}}$ is an eigenfunction of this $T_{N^\circ}$ action.
The {\it{$\gv$-vector}} of $A_{i;\vb{s}}$ at $\vb{s_0}$ is the weight of $\widetilde{A}_{i;\vb{s}}$ under this $T_{N^\circ}$ action. See \cite[Section~5]{GHKK} for further details.

Even if the injectivity assumption is not satisfied for $\Gamma$, it is for $\widetilde{\Gamma}$ and projecting from $\widetilde{M}^\circ_\R$ to $M^\circ_\R$ once again yields a simplicial generalized fan $\cc$  (\cite[Theorem~2.13]{GHKK}).
This is still called the {\it{cluster complex}}.
We obtain an honest simplicial fan (as opposed to generalized fan) by defining the maximal cone associated to a cluster to be the $\R_{\geq 0}$-span of the $\gv$-vectors of the variables (both frozen and unfrozen) of that cluster.  
In this way each cone is contained in a chamber of $\cc$, and the integral points of maximal cones correspond the monomials in the variables of the corresponding cluster.
We call the resulting fan $\Sigma$ the {\it{$\gv$-vector fan}}.
We can avoid certain redundancies in the atlas of $\cA$ by indexing torus charts by maximal cones of $\Sigma$ rather than seeds, as multiple seeds may yield the same unlabeled cluster coordinates.
In this language, we have
\eqn{
\cA_{\Gamma,\Sigma}= \bigcup_{\cham \in \Sigma(\max)} \Spec\lrp{\C\lrb{\lrp{\cham \cap M^\circ}^\gp } } / \sim,
}
where $\lrp{\cham \cap M^\circ}^\gp$ is the group completion of the monoid $\cham \cap M^\circ$.

While $\Sigma$ is clearly closely related to $\cA_{\Gamma,\Sigma}$,
it also has a strong connection to $\cX_{\Gamma^\vee,\Sigma}$.
Observe that $\Sigma$ is a fan in $M^\circ_\R$, and $M^\circ$ is the cocharacter lattice of cluster tori in $\cX_{\Gamma^\vee,\Sigma}$.
In fact, $\cX$-cluster variables on $\cX_{\Gamma^\vee,\Sigma}$ can be associated to $\cv$-vectors (Section~\ref{sec:prin}, below \eqref{eq:Aprinmu}) in much the same way that $\cA$-variables on $\cA_{\Gamma,\Sigma}$ are associated to $\gv$-vectors. (See \cite[Definition~5.10]{BFMNC}.)
These $\cv$-vectors $\lrc{\cv_{i;\seed}:i \in I}$ associated to a cluster $\lrc{X_{i;\seed^\vee}:i \in I^\vee = I}$ generate the dual cone to $\cham:= \Sp_{\R_{\geq 0}} \lrc{\gv_{i;\seed}:i\in I }$. (See \cite[Theorem~1.2]{NZ}.) 
We can once again index by maximal cones of $\Sigma$ rather than seeds to avoid redundancies in the atlas of $\cX_{\Gamma^\vee,\Sigma}$.
In these terms, $\cX_{\Gamma^\vee,\Sigma}$ is the scheme
\eqn{
\cX_{\Gamma^\vee,\Sigma} = \bigcup_{\cham \in \Sigma(\max)} \Spec\lrp{\C\lrb{\lrp{\cham^\vee \cap N^\circ}^\gp } } / \sim,
}
where $\cham^\vee$ is the dual cone of $\cham$ and $\lrp{\cham^\vee \cap N^\circ}^\gp$ is the group completion of the monoid $\cham^\vee \cap N^\circ$.
Observe that this description is highly reminiscent of the construction of toric varieties in terms of a fan.
There are two key differences: the gluing maps and the group completion.
The gluing maps are what put us in the cluster world to begin with, and if we were to replace the mutation maps with toric gluing maps the end result would simply be a torus.
That is to say, given a fan $\Sigma\in M_\R$, if we use the toric gluing maps $\mu_k^*(X^n) = X^n$, we trivially have the equality
\eqn{
T_M:=\Spec\lrp{\C[N]} = \bigcup_{\cham \in \Sigma(\max)} \Spec\lrp{\C\lrb{\lrp{\cham^\vee \cap N}^\gp } } /\sim.
}
This suggests a very natural partial compactification of $\cX_{\Gamma^\vee,\Sigma}$, analogous to a toric variety partially compactifying a torus:
\eqn{
\Xsp_{\Gamma^\vee,\Sigma} = \bigcup_{\cham \in \Sigma(\max)} \Spec\lrp{\C\lrb{\cham^\vee \cap N^\circ} } / \sim.
}
The scheme $\Xsp_{\Gamma^\vee,\Sigma}$ is known as the {\it{special completion}}  of $\cX_{\Gamma^\vee,\Sigma}$, and it was introduced by Fock and Goncharov in \cite{FG_X}.
\begin{remark}
Note that Langlands duality is an involution (up to canonical isomorphism), so if we were to start with $\Gamma^\vee$ we would obtain the special completion $\Xsp_{\Gamma,\Sigma}$.
We will generally make use of this fact to avoid continually writing ``Langlands dual'' and ${}^\vee$.
\end{remark}

\begin{remark}\thlabel{rem:cham}
From Section~\ref{sec:Xfamq} on we will make use of the discussion in this section to index by maximal cones $\cham$ of $\Sigma$ rather than by seeds $\seed$. For instance, we will denote an $\cX$-cluster on $\Xfam_q$ by $\lrc{\Xt_{i;\cham}:i \in I}$. 
Note that for this to be possible, if $\seed_0$ is the initial seed and $\mu^q_{(\seed_0,\seed)}$ and $\mu^q_{(\seed_0,\seed')}$ are a pair of mutation sequences for $\seed,\seed'\in[\seed_0]$, we must have that $\mu^q_{(\seed_0,\seed)}\lrp{\Xt_{i;\seed}} = \mu^q_{(\seed_0,\seed')}\lrp{\Xt_{j;\seed'}}$ whenever $\seed$ and $\seed'$  correspond to the same maximal cone $\cham$ and $\cv_{i;\seed}=\cv_{j;\seed'}$.
This is known to hold for $\cX$, $\Xfam$, and $\cX_q$.
We will show that it holds for $\Xfam_q$ as well in \thref{prop:q-periods} and \thref{cor:cocycle}. For notational simplicity, we will freely use \thref{prop:q-periods} and \thref{cor:cocycle} before proving them.
\end{remark}


\subsection{Quantum cluster algebras} \label{sec:quantum}

Here we review the quantization of $\cX$ by Fock and Goncharov (\cite{FG_cluster_ensembles}), 
and the quantization of $\cA$ by Berenstein and Zelevinsky (\cite{BZquantum}).
$\cX_q$ (respectively $\cA_q$) is a noncommutative $q$-deformation of $\cX$ (respectively $\cA$), where tori are replaced by quantum tori (which are $*$-algebras) and the mutation maps are $*$-algebra isomorphisms of the noncommuative fraction fields of the quantum tori.

\subsubsection{Quantum \texorpdfstring{$\cX$}{X}-varieties}
\label{sec:FG-q}

We again begin with fixed data $\Gamma$ and a seed $\seed$ (Section~\ref{normalcluster}).
Let $d:=\lcm\lrp{{d_i: i\in I}}$, and consider the noncommutative torus 
\eqn{\mathcal{T}_{N} :=\C[q^{\pm \frac{1}{d}}]\lra{X^n\, : \, n\in N,\, X^n \cdot X^{n'} = q^{\lrc{ n, n'}} X^{n+n'}}.}
Thus, we have
\begin{equation} \label{eq:qcommute}
    q^{\{ n', n\}} X^n \cdot X^{n'} = q^{\{ n, n'\}} X^{n'} \cdot X^n. 
\end{equation}
Observe that
$\mathcal{T}_N$ is a $*$-algebra over $\C[q^{\pm\frac{1}{d}}]$, 
with the involutive antiautomorphism $* : \mathcal{T}_{N} \rightarrow \mathcal{T}_{N}$ defined by 
\begin{equation} \label{eq:*map}
    *(X^n) = X^n, \qquad *(q) = q^{-1}. 
\end{equation}

\subsubsection*{Quantum dilogarithm}
To construct a non-commutative deformation of the space $\cX$, the {\it{quantum space}} $\cX_q$, Fock and Goncharov \cite{FG_cluster_ensembles} consider the \emph{quantum dilogarithm} defined by 
\eqn{
\vb{\Psi}_{q}(x) = \prod_{\ell=1}^{\infty} \lrp{1 + q^{2\ell-1} x}^{-1}.
}
Write 
\begin{equation} \label{eq:li}
    \mbox{Li}_{2}\lrp{x; q}  
:= \sum_{\ell \geq 1} \frac{x^{\ell}}{\ell(q^{\ell}-q^{-\ell})}.
\end{equation}
One can check that
\begin{equation} \label{eq:psi}
    \vb{\Psi}_{q}\lrp{ x}  = \exp \left( -\mbox{Li}_{2}\lrp{-x; q} \right) .
\end{equation}
See \cite[Section~3.2]{FG_cluster_ensembles} for further details.

For each seed $\seed$ we will have a copy of $\mathcal{T}_N$, which we denote $\mathcal{T}_{N;\seed}$.
Since $\mathcal{T}_{N;\seed}$ is a {\it{right Ore domain}}\footnote{A domain $A$ is a {\it{right Ore domain}} if for each nonzero elements $x,y\in A$, there exist $r,s\in A$ such that $xr=ys\neq 0$.}, we can take its right noncommutative fraction field $\TT_{N; \seed}$.
The \emph{quantum mutation map} $\mu_k^q$, $k \in \Iuf$, is the isomorphism ${\mu_k^q : \TT_{N;\seed'} \rightarrow \TT_{N;\seed}}$ given by conjugation by $\vb{\Psi}_{q^{1/d_k}}\lrp{X_{k;\seed}}$, 
\[
\mu_k^q := \vb{Ad}_{\vb{\Psi}_{q^{1/d_k}}\lrp{X_{k;\seed}}}.
\] 
Fock and Goncharov \cite{FG_cluster_ensembles} showed that $\mu_k^q$ is a $\ast$-algebras homomorphism, which follows from the fact that the equality $\vb{\Psi}_{q^{-1}}\lrp{x}=\vb{\Psi}_q\lrp{x}^{-1}$ holds.
They then decompose $\mu_k^q$ and express it in terms of cluster variables.
That is, they write  
\[\mu^q_k = \mu^{\sharp}_k \circ \mu_k',\] where $\mu^{\sharp}_k := \vb{Ad}_{\vb{\Psi}_{q^{1/d_k}}\lrp{X_k}}: \TT_{N;\seed} \rightarrow \TT_{N;\seed}$, and $\mu'_k: \TT_{N;\seed'} \rightarrow \TT_{N;\seed}$ relates the coordinates associated to the basis $\seed'$ of $N$ with the coordinates associated to the basis $\seed$. 
Explicitly, 
\eqn{ \mu_k':X_{i;\seed'}=X^{e_{i;\seed'}} \mapsto  X^{e_{i;\seed'}} = X^{e_{i;\seed}+[\epsilon_{ik}]_+e_{k;\seed}} = q^{-\hat{\epsilon}_{ik}[\epsilon_{ik}]_+}X^{e_{i;\seed}}X^{[\epsilon_{ik}]_+ e_{k;\seed}} =
q^{-\hat{\epsilon}_{ik}[\epsilon_{ik}]_+}X_{i;\seed}X^{[\epsilon_{ik}]_+}_{k;\seed}.}

One can compute $\mu_k^{\sharp}$ more explicitly as
\begin{equation}
X_i \mapsto
\begin{cases} \label{eq:fg}
X_i(1+q^{1/{d_k}}X_k )(1+q^{3/{d_k}}X_k ) \cdots (1+q^{(2 | \epsilon_{ik}|-1)/{d_k}}X_k ) & 
\epsilon_{ik}\leq 0,\\
X_i(1+q^{-1/{d_k}}X_k )^{-1}(1+q^{-3/{d_k}}X_k )^{-1} \cdots (1+q^{(1-2 |\epsilon_{ik}|)/{d_k}}X_k )^{-1} & \epsilon_{ik} \geq 0. 
\end{cases}
\end{equation}
Finally, in terms of cluster coordinates, we have
\eq{
\mu_k^q \lrp{X_{i;\seed'}} = 
\begin{cases}
    X_{i;\seed}^{-1} & \text{if } i=k\\
    X_{i;\seed} \lrp{ \prod_{\ell=1}^{\lrm{\epsilon_{ik}}} \lrp{ 1 + q^{(2 \ell - 1)/d_k} X_{k;\seed}^{-\sgn{\epsilon_{ik}} } }}^{-\sgn (\epsilon_{ik})} & \text{if } i\neq k.
\end{cases}
}{eq:muq}

\subsubsection{Quantum \texorpdfstring{$\cA$}{A}-cluster algebras} \label{sec:quantumA}

Now we recall Berenstein and Zelevinsky's treatment of quantum $\cA$-cluster algebras from \cite{BZquantum}.
We illustrate how one can translate between the two frameworks in Section~\ref{sec:relate}. 

Let $\Lambda$ be an $\lrm{I}\times \lrm{I}$ skew-symmetric integer matrix,
and let $\widetilde{B}$ be an $\lrm{I}\times \lrm{\Iuf}$ integer matrix.
The pair $\lrp{\Lambda, \widetilde{B}}$ is called \emph{compatible} if
$
\Bt^T \Lambda = 
\begin{pmatrix}
    D & 0
\end{pmatrix},
$
for a diagonal matrix $D$.
Berenstein and Zelevinsky show (\cite[Proposition 3.3]{BZquantum}) that the pair $(\Lambda, \widetilde{B})$ being compatible implies $\widetilde{B}$ is of full rank and the principal $\lrm{\Iuf} \times \lrm{\Iuf} $ submatrix $B$ is skew-symmetrizable by noting $DB = \widetilde{B}^T \Lambda \widetilde{B}$.
A compatible pair is the input for the Berenstein-Zelevinsky quantum $\cA$-cluster algebra construction.\footnote{Note in particular that while we can always quantize an $\cX$-variety, if the exchange matrix $\widetilde{B}$ (which is $\epsilon_{\Iuf \times I}^t$ in the Fock-Goncharov formalism) is not full rank, then $\cA$ does not admit a quantization by this procedure. }

We again consider a quantum torus $*$-algebra.
In Section~\ref{sec:FG-q}, the cluster coordinates on $\mathcal{T}_{N;\seed}$ were exponentials of the elements of the basis $\seed = \lrc{e_{i;\seed}: i\in I }$ of $N$, and the $q$-commutation relations were defined in terms of the skew form $\lrc{\cdot, \cdot}$ on $N$.
In this case the cluster variables are exponentials of $\lrc{f_{i;\seed}: i \in I}$, a basis for $M^\circ$.
We view $\Lambda$ as a skew form on $M^\circ$ and define $\mathcal{T}_{M^\circ}$ as 
\eqn{\mathcal{T}_{M^\circ}:=\C[q^{\pm \frac{1}{2}}]\lra{A^m \, :\, m\in M^\circ,\, A^m \cdot A^{m'}= q^{\frac{1}{2}\Lambda\lrp{m,m'}} A^{m+m'}} .}
This is a $*$-algebra over $\C[q^{\pm \frac{1}{2}}]$, with $*(A^m) = A^m$ and $*(q)= q^{-1}$. 
Again, we have a copy $\mathcal{T}_{M^\circ;\seed}$ for each seed $\seed$, and we denote the noncommutative fraction field of $\mathcal{T}_{M^\circ;\seed}$ by $\TT_{M^\circ;\seed}$.
We denote $A^{f_{i;\seed}} \in \mathcal{T}_{M^\circ;\seed}$ by $A_{i;\seed}$.
The quantum $\cA$-mutation defining $\cA_q$ is the $*$-algebra isomorphism $\TT_{M^\circ;\seed'} \to \TT_{M^\circ;\seed}$ given by
\eqn{ 
    \mu_k^q\lrp{A_{i;\vb{s}'}}= 
    \begin{cases}
        A^{-f_{k;\vb{s}} + \sum_{j: \epsilon_{kj}>0 } \epsilon_{kj} f_{j;\vb{s}} } +  A^{-f_{k;\vb{s}} - \sum_{j: \epsilon_{kj}<0 } \epsilon_{kj} f_{j;\vb{s}} } & \text{for } i=k\\
        A_{i;\vb{s}} & \text{for } i \neq k
    \end{cases}. 
}
See \cite[Proposition~4.9]{BZquantum}.

\subsection{Quantum scattering diagrams}\label{sec:q-scat}

In this section, we will outline how to construct the quantum version of scattering diagrams for $\cX$ as in \cite[arXiv~version~1, Construction~1.31]{GHKK} and \cite{mandel2015refined}.
We have outlined how to obtain consistent scattering diagrams from skew-symmetric Lie algebras in Section \ref{sec:scat}. 
Thus we only need to merge the quantum algebras into the formulation. 
Gross-Hacking-Keel-Kontsevich define the following Lie bracket on $\mathcal{T}_{N}$:
\eqn{\lrb{X,Y} = Y X - XY.}
It is straightforward to check that this indeed satisfies the properties of a Lie bracket using \eqref{eq:qcommute}.
$\widehat{\fg}$ will be a Lie subalgebra of a localization of $\mathcal{T}_N$ with this Lie bracket.
To define it, Gross-Hacking-Keel-Kontsevich first take a multiplicative subset $S$ of $\C[q^{\pm 1/d}] $, defined as the complement of the union of the two prime ideals $(q^{1/d}-1)$ and $(q^{1/d}+1)$.
They then take $\C_q$ to be the localization of $\C[q^{\pm 1/d}] $ by $S$: $\C_q:= S^{-1}\C[q^{\pm 1/d}]$.
Then $\widehat{\fg}$ is defined to be the free $\C_q$-submodule of 
\eqn{\C(q^{\frac{1}{d}})\lra{X^n\, : \, n\in N,\, X^n \cdot X^{n'} = q^{\lrc{ n, n'}} X^{n+n'}}    }
with basis $\left\{ \left. \widehat{X}^n := \frac{X^n}{q-q^{-1}} \; \right| \; n \in N^+_\seed \right\}$.
(See \eqref{eq:N+} for the definition of $N^+_\seed$.)
This is in fact a Lie subalgebra.
Observe that
\eq{
[ \widehat{X}^n, \widehat{X}^{n'}] = \frac{q^{\{n', n\}}-q^{\{n, n'\}}}{q-q^{-1}}\widehat{X}^{n+n'},
}{eq:LieBracket}
and suppose  $ \{n', n\} >0 $. 
Then
\eqn{
\frac{q^{\{n', n\}}-q^{\{n, n'\}}}{q-q^{-1}} 
= \frac{q^{\{n, n'\}}}{q^{-1}}
\left(\frac{1+q^{2/d}+ \cdots + q^{2(\{n', n\}-1)}}{
1+q^{2/d}+ \cdots + q^{2(d-1)/d}}
\right)} 
is in $\C_q$ as neither $q^{1/d}-1$ nor $q^{1/d}+1$ divide $1+q^{2/d}+ \cdots + q^{2(d-1)/d}$.
The case $ \{n', n\} < 0 $ is similar, and the case $  \{n', n\} =0 $ is clear.
Observe that $\widehat{\fg}$ is a skew-symmetric (with respect to $\lrb{\cdot,\cdot}$) $N^+_\seed$-graded Lie algebra, and in order to apply  \thref{thm:ks} all that remains is to provide the initial scattering diagram.  

Choose the monoid $P \subset N$ as in the classical cluster scattering diagrams construction. 
Then elements of $\widehat{G}= \exp ( \widehat{\fg})$ act on $\widehat{\C [q^{\pm 1/d}][P]}$ by conjugation, i.e. $\exp(g): X^n \mapsto \exp(-g) X^n \exp (g)$. 
Recall the function $\mbox{Li}_2 (x;q) $ defined as in \eqref{eq:li}.
Note that in particular $\mbox{Li}_2 (-X^{e_i};q^{1/d_i}) \in \varprojlim \widehat{\fg} / \widehat{\fg}^{>k} $. 
Indeed the conjugation by $\vb{\Psi}_{q^{1/d_i}}$ can be expressed as 
\begin{align*}
   & \vb{\Psi}_{q^{1/d_i}} \lrp{X^{e_i}}^{-1} X^{e_j} \vb{\Psi}_{q^{1/d_i}} \lrp{X^{e_i}} \\ 
   =&
\begin{cases}
(1+q^{1/{d_i}}X^{e_i} )(1+q^{3/{d_i}}X^{e_i} ) \cdots (1+q^{(2 \{ e_j, e_i \}d_i -1)/{d_i}}X^{e_i} )X^{e_j} & \{ e_j , e_i \} >0 \\
(1+q^{-1/{d_i}}X^{e_i} )^{-1}(1+q^{-3/{d_i}}X^{e_i} )^{-1} \cdots (1+q^{(2 \{ e_j, e_i \}d_i +1)/{d_i}}X^{e_i} )^{-1}X^{e_j} & \{ e_j , e_i \} \leq 0 
\end{cases}.
\end{align*}

In fact, by comparing with \eqref{eq:fg}, one can see this corresponds to the inverse of the automorphism $\mu_k^{\sharp}$ in Fock and Goncharov's setting.

Now consider the scattering diagram
\[
\widehat{\scat}_{\text{in}, \seed} :=
\left\{ \left( v_i^{\perp},  \vb{\Psi}_{ q^{1/d_i}}\lrp{ X^{e_i} } \right)  \mid i \in \Iuf  \right\}, 
\]
where $v_i = p_1^*(e_i)$. 
Then by \thref{thm:ks}, there is a unique (up to equivalence) consistent scattering diagram $\widehat{\scat}_{\seed}$ containing $\widehat{\scat}_{\text{in}, \seed} $, with only outgoing walls in $\widehat{\scat}_{\seed}\setminus \widehat{\scat}_{\text{in}, \seed} $.

Given a compatible pair, one can similarly define the quantum $\cA$ scattering diagram using the $\Lambda$ as the skew-symmetric form of the construction.
In this case the initial scattering diagram is
\eqn{\widehat{\scat}_{\text{in},\seed}^{\cA}:= \lrc{\left.\lrp{e_i^{\perp}, \vb{\Psi}_{q^{1/{d_i}}} \lrp{A^{v_i} }}\right| i \in \Iuf}.
}
Details of this construction may be found in \cite[\S4.2]{mandel2015refined} and \cite[\S4.3]{davison2019strong}.
We will not repeat the particulars here, but we will consider a quantum $\cA$ scattering diagram as an example in the next subsection.  

Assuming that there exists a compatible pair for $\cA_q$, one can construct a compatible pair for $\cA_{\prin,q}$ such that $\cA_q$ is a fiber of $\cA_{\prin,q}$ and quantum theta functions on $\cA_q$ are restrictions of quantum theta functions on $\cA_{\prin,q}$. See \cite[Lemma~4.5]{davison2019strong}.
However, in a subtle difference from the classical case, if a compatible pair does not exist for the $\cA$-algebra, one cannot view $\cA_q$ as a fiber of $\cA_{\prin,q}$ and restrict the quantum $\cAp$ theta functions to obtain quantum $\cA$ theta functions-- which do not exist.  
The issue is that in this case the new ``principal coefficient'' variables that have been introduced\footnote{They must be treated as {\emph{variables}} rather than {\emph{coefficients}} if the $\cAp$ algebra is to have a compatible pair when $\cA$ does not.}
will not commute with the original cluster variables.
To recover the $\cA$-algebra, we would have to specialize the principal coefficient variables to $1$,
at which point their failure to commute with the original cluster variables would yield relations of the form $A_i = q^{c} A_i$ for some $c\neq 0$.
Related notions will appear in \S\ref{sec:q-pstar}.

\subsubsection{Non-positivity for theta functions} \label{sec:non-pos}

While the construction of quantum scattering diagrams is very similar to the classical construction, there is an important difference between the classical and quantum settings which we would like to highlight here. 
In \cite{Davison}, Davison proved the {\it{positivity conjecture}} for skew-symmetric quantum $\cA$-cluster algebras: 
{\it{every cluster variable is a Laurent polynomial in the monomials of any other cluster, with coefficients in $\Z_{\geq 0}[q^{\pm \frac{1}{2}}]$}}.
He has since taken this further with Mandel. 
In \cite{davison2019strong} they show the {\it{strong positivity conjecture}} for quantum theta bases for skew-symmetric type quantum cluster algebras, meaning that the structure constants for decomposing products of quantum theta functions in this setting are also Laurent polynomials in the quantum parameter with positive integer coefficients. 

\cite{greedy} indicates that the rank 2 (classical) theta basis is the same as the greedy basis. Meanwhile, 
Lee-Li-Rupel-Zelevinsky develop rank 2 quantum greedy bases in \cite{lee2016existence} and give an example of a skew-symmetrizable quantum cluster algebra where quantum positivity fails for the quantum greedy basis in \cite{Lee9712}.
We repeat this example, but use scattering diagram machinery, thus treating the quantum theta basis.
In doing this, we find:

\begin{prop}
Quantum positivity fails for the quantum theta basis.
That is, for a quantum cluster algebra of skew-symmetrizable type, there may be quantum theta functions which, when expressed in terms of initial quantum cluster variables, have coefficients which are in $\Z[q^{\pm \frac{1}{2}}]$ but not $\Z_{\geq 0}[q^{\pm \frac{1}{2}}]$.
\end{prop}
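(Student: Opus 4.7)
The plan is to exhibit an explicit counter-example by mirroring the rank 2 skew-symmetrizable data of \cite{Lee9712}, which produced a negative quantum coefficient in the quantum greedy basis, and translating the phenomenon into the scattering-diagram framework. First, I would fix the same underlying rank 2 seed used in \cite{Lee9712}: a lattice $N\cong \Z^2$, a seed $\seed=\lrp{e_1,e_2}$, and a pair of multipliers $\lrp{d_1,d_2}$ together with a skew form $\lrc{\ \cdot\ ,\ \cdot\ }$ reproducing their exchange matrix. This immediately satisfies the injectivity assumption (in rank 2 every nondegenerate skew-symmetrizable exchange matrix does), so the quantum construction of Section~\ref{sec:q-scat} applies and yields a consistent quantum scattering diagram $\widehat{\scat}_{\seed}$ via \thref{thm:ks} whose initial walls are $\lrp{e_i^{\perp}, \vb{\Psi}_{q^{1/d_i}}\lrp{X^{e_i}}}$ for $i=1,2$.

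Next, I would compute the outgoing walls of $\widehat{\scat}_\seed$ order by order in $\widehat{\fg}/\widehat{\fg}^{>k}$. In rank 2 this is tractable: any two walls interact in a single singular ray, and consistency fixes the outgoing walls inductively from the two initial ones via the Lie bracket formula \eqref{eq:LieBracket}. The heart of the argument is the identification of a specific outgoing wall whose scattering function, when expanded as a series in $q^{\pm 1/d}$ and the quantum torus monomials, carries at least one coefficient lying in $\Z\lrb{q^{\pm 1/2}}\setminus \Z_{\geq 0}\lrb{q^{\pm 1/2}}$. This is precisely the calculation deferred to Appendix~\ref{app:wallx}, and it is the main technical obstacle: one must faithfully track the nested products of quantum dilogarithms in the path-ordered product and verify that the offending monomial is not cancelled by higher-order contributions along the same ray.

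With the anomalous wall in hand, I would pass from wall functions to theta functions by the broken line formalism for quantum scattering diagrams. A quantum theta function $\tf_{m,\cham}$ attached to an asymptotic direction $m$ and a target chamber $\cham$ is a sum over broken lines with asymptotic monomial $X^m$ ending in $\cham$, each weighted by the product of wall coefficients it picks up at its bends. Choosing $m$ and $\cham$ so that exactly one broken line realizes the bend at the anomalous wall and contributes a unique monomial to the final expansion, no cancellation against other broken lines is possible—rank 2 keeps the combinatorics of contributing broken lines tight enough to exclude such coincidences. Consequently the coefficient of that monomial in the expansion of $\tf_{m,\cham}$ in terms of the initial quantum cluster variables inherits the non-positive wall coefficient, proving the proposition.

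The principal difficulty is therefore the wall-function computation itself; the conversion from non-positive wall function to non-positive theta function is a direct application of the broken line formalism once a suitable asymptotic direction and target chamber have been selected, and the selection is guided in a straightforward way by which ray carries the anomalous wall.
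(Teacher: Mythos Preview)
Your proposal is correct and follows essentially the same approach as the paper: the paper fixes the rank~2 algebra $\cA(2,3)$ from \cite{Lee9712}, computes the consistent quantum scattering diagram to second order to locate an outgoing wall with a non-positive coefficient (the computation in Appendix~\ref{app:wallx}), and then exhibits a single broken line for $\tf_{-3f_1+5f_2}$ that bends at this wall and contributes the non-positive coefficient $v^{-2}-1+v^2$ to the monomial $A^{f_1-f_2}$. The only substantive difference is that the paper makes all of this explicit---the particular seed, the precise wall function $g_3$, the exact $\gv$-vector, and the unique contributing broken line---whereas your plan leaves these choices to be filled in.
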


Consider the quantization of the rank two cluster algebra $\cA(2,3)$.
In the Fock-Goncharov formulation, this is defined by the exchange matrix
$\epsilon = \begin{pmatrix}
0& -3  \\ 2 & 0
\end{pmatrix}$, 
i.e. 
$D = \begin{pmatrix}
2 & 0  \\ 0 & 3
\end{pmatrix}$
and 
${\hat{\epsilon} =
\begin{pmatrix}
0& -1  \\ 1 & 0
\end{pmatrix}}$.
In the formulation of Berenstein-Zelevinsky, it corresponds to the compatible pair $(\Lambda, B)$, where 
$\Lambda = \begin{pmatrix}
0 & 1\\
-1 & 0
\end{pmatrix}  $ and 
$B= \begin{pmatrix}
0& 2  \\ -3 & 0
\end{pmatrix}$, and hence
$D' = \begin{pmatrix}
3 & 0  \\ 0 & 2
\end{pmatrix}$.

Consider the seed $\{e_1 = (1,0), e_2 = (0,1) \}$, and the corresponding basis
$\{ f_1 = \frac{1}{2} e_1^*, f_2 = \frac{1}{3} e_2^*\}$ of $M^{\circ}$. 
For ease of comparison, we will adopt the notation of \cite{Lee9712} here, set $\qbz=v^{-2}$, and so consider the quantum torus with relation 
$A_2 A_1 = v^{2} A_1 A_2$.
Let us also note that, as we will see in \S\ref{sec:q-pstar}, to relate Berenstein-Zelevinsky conventions to the Fock-Goncharov conventions we have adopted, we should set $\qfg^{\frac{1}{d}}=\qbz^{-\frac{1}{2}}=v$. 
In this case the initial scattering diagram for the quantum $\cA$ variety is
$$\widehat{\scat}_{\text{in}}^\cA = \lrc{  \lrp{ e_1^\perp , \vb{\Psi}_{v^{3}}\lrp{ A_2^{-3}} }, \lrp{ e_2^\perp , \vb{\Psi}_{v^{2}}\lrp{ A_1^{2}} } } .$$ 
Observe that, for $u = u_1 f_1 + u_2 f_2$,
\eq{\vb{\Psi}_{v^{3}}\lrp{ A_2^{-3}}^{-1} A^u \vb{\Psi}_{v^{3}}\lrp{ A_2^{-3}}
= \lrp{ \prod_{\ell=1}^{\lrm{u_1}} \lrp{1+v^{\sgn(u_1) 3 \lrp{2 \ell -1}} A_2^{-3} }^{\sgn(u_1)} } A^u }{eq:g2}
and
\eq{\vb{\Psi}_{v^{2}}\lrp{ A_1^{2}}^{-1} A^u \vb{\Psi}_{v^{2}}\lrp{ A_1^{2}}
= \lrp{ \prod_{\ell=1}^{\lrm{u_2}} \lrp{1+v^{\sgn(u_2) 2 \lrp{2 \ell -1}} A_1^{2} }^{\sgn(u_2)} } A^u. }{eq:g1}
It is straightforward to check that the initial wall functions reproduce the quantum mutation formulae for the initial seed of this quantum cluster algebra.

Now set $L:= p_1^*(N)$, set $L^+:=p_1^*\lrp{N_\seed^+}\setminus\lrc{0}$, and let $d:L\to \Z$ be given by $\lra{e_1-e_2,\ \cdot\ }$.
Then $d\lrp{p_1^*(e_1)} = d\lrp{p_1^*(e_2)} = 1$.
Now set $P:= \lrp{\Sp_{\R_{\geq 0}} \lrc{p_1^*(e_1),p_1^*(e_2)} } \bigcap M^\circ = \Sp_{\Z_{\geq 0}} \lrc{f_1, -f_2}$,  
and let $J$ be the monomial ideal in $\C[P]$ generated by $L^+$.
Observe that $\widehat{\scat}_{\text{in}}^\cA $ is consistent for $G_1$.
To see the failure of quantum positivity in this example, we only need to compute the consistent scattering diagram for $G_2$, denoted $\widehat{\scat}_{2}^\cA$.

Consider the following loop $\gamma$:


\noindent
\begin{center}
\begin{tikzpicture}[scale=1.1]

    \def\l{3}
    \def\d{1}
    \def\op{.5}

    \path (-\l,0) coordinate (3) --++ (\l,0) coordinate (0) --++ (\l,0) coordinate (1);
    \path (0,\l) coordinate (2) --++ (0,-2*\l) coordinate (4);
    \path (0,0) --++ (-3*\d,2*\d) coordinate (5);

    \draw[thick,->] (1) -- (3);
    \draw[thick,->] (4) -- (2);

    \node at (.65*\l,-.5) {$g_1 \coloneqq \vb{Ad}_{\Psi_{v^2}(A_1^2)}^{-1}$};
    \node at (.45*\l,.85*\l) {$g_2 \coloneqq \vb{Ad}_{\vb{\Psi}_{v^3}(A_2^{-3})}^{-1}$};

    \draw[color=violet, ->] ([shift={(155:.5*\l)}] 0,0) arc (155:500:.5*\l);
    \node [color=violet] at (225:.6*\l) {$\gamma$}; 

\end{tikzpicture}.
\end{center}

For $\widehat{\scat}_{\text{in}}^\cA $, the wall crossing automorphism associated to $\gamma$ is $\fp_\gamma^1 = g_2 \circ g_1^{-1} \circ g_2^{-1} \circ g_1$.
To obtain $\widehat{\scat}_{2}^\cA$, we add walls such that $\fp_\gamma^2$ is the identity in $G_2$.
But for $u = u_1 f_1 + u_2 f_2$, $\lrp{\fp_\gamma^1}^{-1} =g_1^{-1} \circ g_2  \circ g_1 \circ g_2^{-1} $ is given by\footnote{See Appendix~\ref{app:wallx} for this computation.
While the calculation is a bit lengthy, the referee has pointed out to us that the same result can be obtained with considerably less effort using \cite[Lemma~2.9]{mandel2015refined}, or the new techniques of the pentagon relation for quantum dilogarithm elements in \cite{Nak22}. In fact, a more general non-positivity result may be found in \cite[Theorem~5.6]{Nak22}.
}
\eq{\lrp{\fp_\gamma^1}^{-1} (A^u)= 
\Biggl(& 1 +
    \biggl(
        \sgn(u_1) \lrp{ v^{-4} + 1 + v^4 } \sum_{\ell = 1}^{\lrm{u_1}} v^{\sgn(u_1) 3 (2 \ell -1)}
        +
        \sgn(u_2) \lrp{ v^{-3} + v^3 } \sum_{\ell = 1}^{\lrm{u_2}} v^{\sgn(u_2) 2 (2 \ell -1)}\\
        &+
        \sgn(u_1) \sgn(u_2) \lrp{ v^{6} - v^{-6} } \sum_{\ell_1 = 1}^{\lrm{u_1}} \sum_{\ell_2 = 1}^{\lrm{u_2}} v^{\sgn(u_1) 3 (2 \ell_1 -1)+\sgn(u_2) 2 (2 \ell_2 -1)}
    \biggr) A^{2 f_1 - 3 f_2} + \cdots 
\Biggr) A^u.
}{eq:g3}
Note that there is only one term whose exponent vector $m$ satisfies $d(m)=2$.
All terms encompassed by the ellipses are higher order, i.e. have exponent vectors $m'$ with $d(m') > 2$. 
As a result, $\widehat{\scat}_{2}^\cA$ has only one new wall.
Let $g_3 \in G_2$ denote the map
\eqn{A^u \mapsto \Biggl(& 1 +
    \biggl(
        \sgn(u_1) \lrp{ v^{-4} + 1 + v^4 } \sum_{\ell = 1}^{\lrm{u_1}} v^{\sgn(u_1) 3 (2 \ell -1)}
        +
        \sgn(u_2) \lrp{ v^{-3} + v^3 } \sum_{\ell = 1}^{\lrm{u_2}} v^{\sgn(u_2) 2 (2 \ell -1)}\\
        &+
        \sgn(u_1) \sgn(u_2) \lrp{ v^{6} - v^{-6} } \sum_{\ell_1 = 1}^{\lrm{u_1}} \sum_{\ell_2 = 1}^{\lrm{u_2}} v^{\sgn(u_1) 3 (2 \ell_1 -1)+\sgn(u_2) 2 (2 \ell_2 -1)}
    \biggr) A^{2 f_1 - 3 f_2} + \cdots 
\Biggr) A^u.}
The new wall is $\lrp{\wall_3 , g_3}$, where $\wall_3:= \R_{\geq 0} \cdot \lrp{-2 f_1 +3 f_2}$. The scattering diagram $\widehat{\scat}_{2}^\cA$ is shown in Figure~\ref{fig:Scat2}.
The failure of quantum positivity in this example stems from this wall function having coefficients in $\Z[v^{\pm 1}]$ rather than $\Z_{\geq 0}[v^{\pm 1}]$.


\noindent
\begingroup
\captionsetup{type=figure}
\begin{center}
\begin{tikzpicture}[scale=1.1]

    \def\l{3}
    \def\d{1}
    \def\op{.5}

    \path (-\l,0) coordinate (3) --++ (\l,0) coordinate (0) --++ (\l,0) coordinate (1);
    \path (0,\l) coordinate (2) --++ (0,-2*\l) coordinate (4);
    \path (0,0) --++ (-2*\d,3*\d) coordinate (5);

    \draw[thick,->] (1) -- (3);
    \draw[thick,->] (4) -- (2);
    \draw[thick,->] (0) -- (5); 

    \node at (.65*\l,-.5) {$g_1 \coloneqq \vb{Ad}_{\Psi_{v^3}(A_1^2)}^{-1}$};
    \node at (.45*\l,.85*\l) {$g_2 \coloneqq \vb{Ad}_{\vb{\Psi}_{v^2}(A_2^{-3})}^{-1}$};

    \node at (-.5*\l,.6*\l) {$g_3$};

\end{tikzpicture}.
\captionof{figure}{\label{fig:Scat2} The scattering diagram $\widehat{\scat}_{2}^\cA$. } 
\end{center}
\endgroup


While the theta basis is indexed by $\gv$-vectors, the greedy basis is indexed by $\vb{d}$-vectors.
A piecewise linear map sending the $\gv$-vector of a classical theta function to its $\vb{d}$-vector (the theta function now being viewed as a greedy basis element) is given in \cite{greedy}.
Due to a convention difference (amounting to a transpose of the exchange matrix) we need a slightly modified version of the map from \cite{greedy}.
For the rank two cluster algebra $\cA(b,c)$, define 
\eqn{T: M^\circ_\R &\to M^\circ_\R\\
m &\mapsto \begin{cases}
m & \text{if } m_1 \geq 0\\
m + (0,c m_1) & \text{if } m_1 \leq 0
\end{cases}.
}
Then $T(\gv(\tf_m)) = -d(\tf_m)$ by a similar argument as in \cite[Remark~4.5]{greedy}. 

In \cite{Lee9712}, they considered the greedy basis element corresponding to the $\vb{d}$-vector $(3,4)$:
\[
X[3,4] = \sum_{p,q \geq 0} e(p,q) X^{(2p-3, 3q-4)}. 
\]
In particular, $e(2,1) = v^2-1+v^{-2}$. 
This term $e(2,1)$ is the coefficient of $X^{(1,-1)}$, or in the Fock-Goncharov notation we have adopted $A^{f_1-f_2}$.

Note that $T(-3,5) = (-3, -4)$. Then the $\vb{d}$-vector $(3,4)$ corresponds to the $\gv$-vector $(-3,5)$ in our setting.
Now we compute the coefficient of $A^{f_1-f_2}$ in $\tf_{(-3,5)}$, expressed in terms of the initial cluster.
We pick a basepoint $Q$ in the positive quadrant and consider all possible broken lines whose initial decorating monomial is $A^{-3 f_1 +5 f_2}$, final decorating monomial is a scalar multiple of $A^{f_1-f_2}$, and endpoint is $Q$.
There is in fact only one, shown in Figure~\ref{fig:brokenline}.
We obtain precisely the same non-positive coefficient $\lrp{v^{-2}-1+v^2}$ of $A^{f_1-f_2}$ for the quantum theta function $\tf_{-3 f_1 +5 f_2}$ as Lee-Li-Rupel-Zelevinsky do for the quantum greedy basis element $X[3,4]$.


\noindent
\begingroup
\captionsetup{type=figure}
\begin{center}
\begin{tikzpicture}[scale=1.5]

    \def\l{3}
    \def\d{1.5}
    \def\x{.33}
    \def\y{.51}
    \def\op{.3}

    \path (-\l,0) coordinate (3) --++ (\l,0) coordinate (0) --++ (\l,0) coordinate (1);
    \path (0,1.5*\l) coordinate (2) --++ (0,-2*\l) coordinate (4);

    \path[name path = bbox] (-\l,-.5*\l) -- (\l,-.5*\l) -- (\l,1.5*\l) -- (-\l,1.5*\l) -- cycle;

    \path (0,0) --++ (-2*\d,3*\d) coordinate (5);

    \draw[thick,->, opacity =\op] (1) -- (3);
    \draw[thick,->, opacity =\op] (4) -- (2);
    \draw[name path=wall3, thick,->, opacity =\op] (0) -- (5); 

    \node [opacity =\op] at (.9*\l,-.3) {$g_1$};
    \node [opacity =\op] at (.1*\l,1.4*\l) {$g_2$};

    \node [opacity =\op] at (-.95*\l,1.25*\l) {$g_3$};

    \node [circle, fill, inner sep = 1.5pt, color = blue] (Q) at (\x,\x) {};
    \coordinate (x1) at (2*\x,0);
    \coordinate (x2) at (0,-2*\x);
    \path [name path=bl] (x2) --++ (-5*\x,10*\x); 

    \path [name intersections={of=bl and wall3,by=x3}];

    \path [name path=ray] (x3) --++(-3*\y,5*\y);
    \path [name intersections={of=ray and bbox,by=in}];

    \draw [thick, color=blue] (Q.center) -- (x1) -- (x2) -- (x3)  node [color=blue, pos = .5, sloped, below] {$\lrp{v^{-2}-1+v^2} A^{-f_1 +2 f_2}$} -- (in) node [color=blue, pos = .5, sloped, above] {$A^{-3f_1 +5 f_2}$};

    \node [color=blue] at (1.5,-.7) {$\lrp{v^{-2}-1+v^2} A^{-f_1 - f_2}$}; 

    \node [color=blue] at (1.7,.3) {$\lrp{v^{-2}-1+v^2} A^{f_1 - f_2}$}; 
    
    \node [color=blue] at (.2,.55) {$Q$};
    
\end{tikzpicture}.
\captionof{figure}{\label{fig:brokenline} The broken line giving the $A^{f_1-f_2}$ summand of $\tf_{-3f_1+5f_2}$. 
While we have drawn this atop $\widehat{\scat}^\cA_2$, it is not difficult to see that this is the only broken line contributing to any order.} 
\end{center}
\endgroup



\section{The quantum space \texorpdfstring{$\Xfsp_q$}{Xfamq}}\label{sec:Xfamq}

In Section \ref{sec:FG-q}, we recall the quantization of a cluster Poisson variety $\cX$, which directly extends to the quantization for its special completion $\Xsp$, by describing the quantum mutation map $\mu_k^q$ as a composition $\mu_k^\sharp \circ \mu_k' $. 
To establish the quantum version of the $\Xfsp$ family deforming $\Xsp$, we introduce coefficients to both $\mu_k^\sharp$ and $\mu_k'$.
Consider the ring $R:=\C\lrb{t_i: i\in I}$.
We are going to work over the $*$-ring $R[q^{\pm \frac{1}{d}}]$, where $d:=\lcm\lrc{d_i:i\in I}$ and the involution map $*$ is defined as in \eqref{eq:*map}.

Geometrically, we want to associate a noncommutative affine scheme to each maximal cone $\cham$ of the $\gv$-vector fan $\Sigma$ and glue them via a quantum version of \eqref{eq:Xfammu}, or a version of \eqref{eq:muq} with coefficients. Algebraically, each maximal cone $\cham$ determines a $*$-algebra $A_{\cham}$ over $R[q^{\pm \frac{1}{d}}]$, a noncommutative polynomial ring given by
\eqn{ A_{\cham}:= R\lrb{q^{\pm \frac{1}{d}}}\lra{\Xt_{\cham}^n: n \in N,\  \Xt_{\cham}^{n} \Xt_{\cham}^{n'} 
= 
q^{\lrc{n,n'}} \Xt_{\cham}^{n+n'}}.}
In particular, if we denote $\widehat{\epsilon}_{ij}=\{e_i,e_j\}$ we have
\eqn{ q^{- \widehat{\epsilon}_{ij}} \Xt_{i;\cham} \Xt_{j;\cham} 
= 
q^{- \widehat{\epsilon}_{ji}} \Xt_{j;\cham} \Xt_{i;\cham} .}

The quantum mutation with coefficients will be a $*$-algebra isomorphism of the noncommutative fraction fields of these $*$-algebras.\footnote{We review the notion of a noncommutative fraction field in the following Subsection \ref{sec:qAff} and show that it can be applied here.}
Thinking of the geometric interpretation, we will express the noncommutative fraction field of $A_\cham$ by $\sK\lrp{\A^{\lrm{I}}_{M;\cham,q}\lrp{R}}$.

\subsection{Quantum affine algebra} \label{sec:qAff}
We begin by reviewing some properties of the spectrum of a not-necessarily-commutative ring. 
For further details about noncommutative Noetherian rings, we refer the reader to \cite{GW} and \cite{McRob}.

\begin{definition}
Let $A$ be a ring. 
\begin{itemize}
       \item A proper two-sided ideal $P$ of $A$ is {\it{prime}} if whenever $I$ and $J$ are two-sided ideals of $A$ with $IJ\subseteq P$, either $I\subseteq P$ or $J\subseteq P$. \footnote{This definition is equivalent to the usual one in the commutative case, see \cite[Proposition~3.1]{GW}.}
    \item The {\it{prime spectrum}} $\mathrm{Spec}(A)$ is the set of all two-sided prime ideals of $A$.
\end{itemize}
\end{definition}

The prime spectrum $\mathrm{Spec}(A)$ is a topological space with the {\it patch topology}: the closed sets are the subsets $X$ of $\mathrm{Spec}(A)$ such that any prime ideal of $A$ which is the intersection of some primes in $X$ belongs to $X$ (see \cite[4.6.14]{McRob}).

It is convenient to think of $A_{\cham}$ as an {\it{iterated skew polynomial ring}}. 
We illustrate this concept in the case of adjoining three noncommuting variables.
The general case is simply an iteration of this process. See \cite[Chapter~1]{GW} for a more complete treatment of iterated skew polynomial rings. 
Consider the $R[q^{\pm \frac{1}{d}}]$-automorphism $\alpha_2$ of $R[q^{\pm \frac{1}{d}}][\Xt_{1;\cham}]$ given by 
\[\alpha_2\lrp{\Xt_{1;\cham}}=q^{-2\widehat{\epsilon}_{12}}\Xt_{1;\cham}.\]
The ring $R[q^{\pm \frac{1}{d}}][\Xt_{1;\cham}][\Xt_{2;\cham};\alpha_2]$ is the noncommutative polynomial ring where $\Xt_{2;\cham}\Xt_{1;\cham}=\alpha_2\lrp{\Xt_{1;\cham}}\Xt_{2;\cham}$. 
Now consider the $R[q^{\pm \frac{1}{d}}]$-automorphism $\alpha_3$ of $R[q^{\pm \frac{1}{d}}][\Xt_{1;\cham}][\Xt_{2;\cham};\alpha_2]$ given by 
\[\alpha_3\lrp{\Xt_{1;\cham}}=q^{-2\widehat{\epsilon}_{13}}\Xt_{1;\cham},
 \text{ and} \quad  \alpha_3\lrp{\Xt_{2;\cham}}=q^{-2\widehat{\epsilon}_{23}}\Xt_{2;\cham}.
\]
Then the ring $R[q^{\pm \frac{1}{d}}][\Xt_{1;\cham}][\Xt_{2;\cham};\alpha_2][\Xt_{3;\cham};\alpha_3]$ is the noncommutative polynomial ring satisfying \eqn{\Xt_{2;\cham}\Xt_{1;\cham}=\alpha_2\lrp{\Xt_{1;\cham}}\Xt_{2;\cham}\ , \qquad \Xt_{3;\cham}\Xt_{1;\cham}=\alpha_3\lrp{\Xt_{1;\cham}}\Xt_{3;\cham}\ , \text{ and } \qquad \Xt_{3;\cham}\Xt_{2;\cham}=\alpha_3\lrp{\Xt_{2;\cham}}\Xt_{3;\cham}.} 
With this description, we can assert that $A_{\cham}$ is a Noetherian domain.

\begin{prop}\thlabel{prop:Noether}
The ring $A_\cham$ is a Noetherian non-commutative domain.
\end{prop}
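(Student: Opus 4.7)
The plan is to exploit exactly the iterated skew polynomial ring structure sketched just before the statement. Starting from the base $R[q^{\pm 1/d}] = \C[t_i: i\in I][q^{\pm 1/d}]$, which is a commutative Noetherian domain (being a localization of a polynomial ring in finitely many variables over $\C$), I would fix a total order on $I$, say $I=\{1,2,\dots,|I|\}$, and build $A_\cham$ by successively adjoining $\Xt_{1;\cham},\Xt_{2;\cham},\dots,\Xt_{|I|;\cham}$ as skew polynomial extensions.

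At the $k$-th stage, let $S_{k-1}=R[q^{\pm 1/d}][\Xt_{1;\cham};\alpha_1][\Xt_{2;\cham};\alpha_2]\cdots[\Xt_{k-1;\cham};\alpha_{k-1}]$, and define $\alpha_k$ to be the $R[q^{\pm 1/d}]$-algebra automorphism of $S_{k-1}$ determined by $\alpha_k(\Xt_{j;\cham}) = q^{-2\widehat{\epsilon}_{jk}}\Xt_{j;\cham}$ for $j<k$. The key point to verify is that $\alpha_k$ is a well-defined automorphism: one checks that rescaling each $\Xt_{j;\cham}$ by a central unit $q^{-2\widehat{\epsilon}_{jk}}$ preserves the already-imposed commutation relations $q^{-\widehat{\epsilon}_{ij}}\Xt_{i;\cham}\Xt_{j;\cham}=q^{-\widehat{\epsilon}_{ji}}\Xt_{j;\cham}\Xt_{i;\cham}$, because the skew-symmetry of $\widehat{\epsilon}$ forces these rescalings to cancel pairwise; invertibility is immediate since the rescaling factors are units.

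Next, I invoke the two standard facts for skew polynomial extensions (see e.g.\ \cite[Chapter~1]{GW}): if $S$ is a Noetherian domain and $\alpha$ is a ring automorphism of $S$, then $S[x;\alpha]$ is again a Noetherian domain. The Noetherian property is the skew Hilbert basis theorem, and the domain property follows by comparing leading coefficients (which works because $\alpha$, being injective, sends nonzero elements to nonzero elements). Applying this inductively from $k=1$ to $|I|$ shows that each $S_k$, and in particular $S_{|I|}$, is a Noetherian domain.

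Finally, I would identify $S_{|I|}$ with $A_\cham$. The universal property of the iterated skew polynomial ring, together with the verification above that the imposed relations $\Xt_{k;\cham}\Xt_{j;\cham} = \alpha_k(\Xt_{j;\cham})\Xt_{k;\cham} = q^{-2\widehat{\epsilon}_{jk}}\Xt_{j;\cham}\Xt_{k;\cham}$ are precisely equivalent to $q^{-\widehat{\epsilon}_{jk}}\Xt_{j;\cham}\Xt_{k;\cham}=q^{-\widehat{\epsilon}_{kj}}\Xt_{k;\cham}\Xt_{j;\cham}$ (using $\widehat{\epsilon}_{kj}=-\widehat{\epsilon}_{jk}$), gives the desired isomorphism. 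The only genuine step that requires any care is the compatibility check for $\alpha_k$; once that is in hand, the Noetherian-domain conclusion is automatic from the iterated skew polynomial machinery.
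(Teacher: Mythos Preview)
Your proposal is correct and follows essentially the same approach as the paper: induction via the skew Hilbert basis theorem applied to the iterated skew polynomial ring description of $A_\cham$. If anything, you are more careful than the paper in verifying that each $\alpha_k$ is a well-defined automorphism and in making the final identification $S_{|I|}\cong A_\cham$ explicit.
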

\begin{proof}
By induction on $\lrm{I}$. Note that $R[q^{\pm \frac{1}{d}}]$ is a (commutative) Noetherian domain, so $R[q^{\pm \frac{1}{d}}][\Xt_{1;\cham}]$ also is a (commutative) Noetherian domain. The skew Hilbert basis theorem (see \cite[Theorem~1.14]{GW})
implies that $R[q^{\pm \frac{1}{d}}][\Xt_{1;\cham}][\Xt_{2;\cham};\alpha_2]$ is a Noetherian ring.
By construction it is also a domain.
Assume the claim holds for all $i< \lrm{I}$.

Now, since $R[q^{\pm \frac{1}{d}}]\langle\Xt_{1;\cham},\dots,\Xt_{\lrm{I};\cham}\rangle/\sim$ is equal to $R[q^{\pm \frac{1}{d}}][\Xt_{1;\cham}][\Xt_{2;\cham};\alpha_2]\cdots[\Xt_{\lrm{I}-1;\cham};\alpha_{\lrm{I}-1}][\Xt_{\lrm{I};\cham};\alpha_{\lrm{I}}]$, by the induction hypothesis $R[q^{\pm \frac{1}{d}}][\Xt_{1;\cham}][\Xt_{2;\cham};\alpha_2]\cdots[\Xt_{\lrm{I}-1;\cham};\alpha_{\lrm{I}-1}]$ is a Noetherian ring, then by the skew Hilbert basis theorem we concluded that $R[q^{\pm \frac{1}{d}}][\Xt_{1;\cham}][\Xt_{2;\cham};\alpha_2]\cdots[\Xt_{\lrm{I}-1;\cham};\alpha_{\lrm{I}-1}][\Xt_{\lrm{I};\cham};\alpha_{\lrm{I}}]$ is a Noetherian ring. Finally, note that since by iteration $R[q^{\pm \frac{1}{d}}][\Xt_{1;\cham}][\Xt_{2;\cham};\alpha_2]\cdots[\Xt_{\lrm{I}-1;\cham};\alpha_{\lrm{I}-1}]$ is a domain, we obtain that $A_{\mathcal{G}}$ is also a domain.
\end{proof}

By \cite[Theorem~1]{Goldie}, a ring without zero divisors which satisfies the ascending chain condition for right ideals has a right noncommutative fraction field.
Combining with \thref{prop:Noether}, an immediate corollary is that we can take the noncommutative fraction field of $A_{\cham}$. 
We denote the right noncommutative fraction field of $A_\cham$ by $\sK\lrp{\A^{\lrm{I}}_{M;\cham,q}\lrp{R}}$.

\begin{remark}
By the universal property of right rings of fractions, the right noncommutative fraction field of $A_\cham$ is canonically isomorphic to the right noncommutative fraction field of any localization of $A_\cham$.  
This will allow us to glue quantum affine schemes by noncommutative ``birational'' maps $\mu_{k,\vb{t};\cham}^q$ in Section~\ref{sec:qmut}.
\end{remark}

We can consider the multiplicatively closed set $S$ consisting of the powers of the cluster variables $\lrc{\Xt_{i;\cham}: i \in I}$ of $A_{\cham}$. 
Observe that the cluster variables are all normal elements of $A_{\cham}$, meaning $\Xt_{i;\cham}A_{\cham}=A_{\cham}\Xt_{i;\cham}$. Then, we construct the noncommutative localization of $A_\cham$ by $S$ as
$\mathcal{T}_{\cham}= R[q^{\pm \frac{1}{d}}] \lra{\Xt_{\cham}^n: n \in N}/\sim$ 
with the same relations as before: 
\eqn{\Xt_{\cham}^{n} \Xt_{\cham}^{n'} 
= 
q^{\lrc{n,n'}} \Xt_{\cham}^{n+n'}.}
The ring $\mathcal{T}_{\cham}$ is the {\it{quantum torus algebra over $R[q^{\pm \frac{1}{d}}]$}}.

\begin{prop}
There is a 1-1 correspondence between the elements in $\mathrm{Spec}\lrp{A_{\cham}}$ that do not contain the element $\Xt_{i;\cham}$ for all $i=1,\dots,\lrm{I}$ and the elements of $\mathrm{Spec}\lrp{\mathcal{T}_{\cham}}$.
\end{prop}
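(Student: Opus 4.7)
The plan is to apply the standard theory of Ore localization at a multiplicatively closed set of normal elements. Specifically, the bijection will be realized by the classical extension–contraction maps $P\mapsto P\cdot S^{-1}$ and $Q\mapsto Q\cap A_\cham$, with the only real work being (i) verifying that $S$ is a right Ore set so that $\mathcal{T}_\cham$ is genuinely the right Ore localization $A_\cham S^{-1}$, and (ii) translating the condition ``$P$ is disjoint from $S$'' into ``$P$ contains none of the $\Xt_{i;\cham}$.''

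First, I would show that $S$ is a right denominator set in $A_\cham$. Each cluster variable $\Xt_{i;\cham}$ is normal in $A_\cham$ (the defining relation $q^{-\widehat{\epsilon}_{ij}}\Xt_{i;\cham}\Xt_{j;\cham}=q^{-\widehat{\epsilon}_{ji}}\Xt_{j;\cham}\Xt_{i;\cham}$ gives $\Xt_{i;\cham} A_\cham = A_\cham \Xt_{i;\cham}$), so for any $a\in A_\cham$ and $s\in S$, one directly produces $a'\in A_\cham$ with $as=sa'$, establishing the right Ore condition. Combined with \thref{prop:Noether} (so in particular $A_\cham$ is a domain, giving the right reversibility condition trivially), this confirms that the Ore localization exists and agrees with the presentation of $\mathcal{T}_\cham$ given in the statement.

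Second, I would invoke the standard correspondence theorem for Ore localizations (see, e.g., \cite[Theorem~10.20]{GW}): the maps $P\mapsto PS^{-1}$ and $Q\mapsto Q\cap A_\cham$ are mutually inverse and give a bijection
\[
\{P\in\mathrm{Spec}(A_\cham):P\cap S=\emptyset\}\ \longleftrightarrow\ \mathrm{Spec}(\mathcal{T}_\cham).
\]
Primeness is preserved in both directions because extension and contraction commute with products of two-sided ideals when $S$ is central on two-sided ideals generated by normal elements.

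The main obstacle — and the final step — is to show that the condition $P\cap S=\emptyset$ is equivalent to the condition stated in the proposition, namely that $P$ does not contain any $\Xt_{i;\cham}$. One direction is immediate since each $\Xt_{i;\cham}\in S$. For the converse, suppose $s=\prod_i \Xt_{i;\cham}^{a_i}\in P$. Because each $\Xt_{i;\cham}$ is normal, $J_i:=\Xt_{i;\cham}A_\cham=A_\cham\Xt_{i;\cham}$ is a two-sided ideal with $J_i^{a_i}=\Xt_{i;\cham}^{a_i}A_\cham$, and the product $\prod_i J_i^{a_i}$ lies in the two-sided ideal generated by $s$, hence in $P$. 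Iterating the definition of prime for two-sided ideals then forces some $J_i\subseteq P$, i.e.\ $\Xt_{i;\cham}\in P$, completing the equivalence and with it the proposition.
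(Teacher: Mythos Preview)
Your proposal is correct and follows essentially the same approach as the paper: both reduce the statement to the standard prime-ideal correspondence for Ore localization at a multiplicatively closed set of normal elements in a Noetherian ring (the paper cites \cite[Proposition~2.1.16]{McRob}, you cite the equivalent \cite[Theorem~10.20]{GW}). Your write-up is in fact more detailed than the paper's one-line citation, since you explicitly verify that $S$ is Ore via normality and spell out why $P\cap S=\emptyset$ is equivalent to $P$ containing no $\Xt_{i;\cham}$.
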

\begin{proof}
Since $A_{\cham}$ is a Noetherian ring, the results follows by \cite[Proposition~2.1.16]{McRob}.
\end{proof}

While the points of $\Spec\lrp{\mathcal{T}_{\cham}}$ are not so straightforward to relate to the points of the spectrum of a commutative ring-- that is, to points of a classical affine scheme--
the situation becomes simpler if we localize our base ring appropriately.

\begin{definition}
Define the localized base ring $R_q$ by
\eqn{R_q:=R[q^{\pm \frac{1}{d}}]_{\{q^{\frac{k}{d}} - q^{-\frac{k}{d}} \,:\, k\in \Z_{>0} \}},} 
and define the {\it{quantum torus algebra over $R_q$}} by
$\mathcal{T}_{\cham;q}:= R_q \lra{\Xt_{\cham}^n: n \in N}/\sim$ 
with the same relations as before: 
\eqn{\Xt_{\cham}^{n} \Xt_{\cham}^{n'} 
= 
q^{\lrc{n,n'}} \Xt_{\cham}^{n+n'}.}
\end{definition}

Based on the arguments given in \cite[Proposition~11.2]{BZquantum} and \cite[Lemma~11.33]{Lorenz}, we describe the elements of $\Spec\lrp{\mathcal{T}_{\cham;q}}$ in terms of the prime ideals of a commutative ring.\footnote{In the first version of this paper we erroneously stated \thref{prop:ideals} for $\Spec\lrp{\mathcal{T}_{\cham}}$ rather than $\Spec\lrp{\mathcal{T}_{\cham;q}}$.  Thanks to the anonymous referee for pointing out a mistake in our original statement and proof.}

\begin{prop}\thlabel{prop:ideals}
Let $\mathcal{Z}$ be the center of the localized quantum torus algebra $\mathcal{T}_{\cham;q}$. There is a 1-1 correspondence between the ideals of $\mathcal{Z}$ and the two-sided ideals of $\mathcal{T}_{\cham;q}$ given by extension from $\mathcal{Z}$ to $\mathcal{T}_{\cham;q}$, with inverse given by contraction. 
In particular, this gives a bijection between the sets  $\mathrm{Spec}\lrp{\mathcal{Z}}$ and  $\mathrm{Spec}\lrp{\mathcal{T}_{\cham;q}}$.
\end{prop}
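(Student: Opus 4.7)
The plan is to reduce the statement to the structure theory of $\mathcal{T}_\cham$ as an algebra graded by $\overline{N} := N/N_0$, where $N_0 := \{n \in N : \{n, n'\} = 0 \text{ for all } n' \in N\}$ is the radical of the skew form. Since $q$ is transcendental over $R$, the monomial $\Xt^n$ is central if and only if $n \in N_0$, so $\mathcal{Z} = R[q^{\pm 1/d}][N_0]$. The induced pairing on $\overline{N}$ is nondegenerate, and the natural $\overline{N}$-grading of $\mathcal{T}_\cham$ has degree-zero component exactly $\mathcal{Z}$, with each nonzero graded piece a rank-one free $\mathcal{Z}$-module $\mathcal{Z} \cdot \Xt^{n(\overline{n})}$ for any chosen lift $n(\overline{n})$ of $\overline{n}$. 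Using the two-sided action of $\mathcal{T}_\cham$ on itself, any $\overline{N}$-graded two-sided ideal $I$ is forced to have all graded pieces of the form $J \cdot \Xt^{n(\overline{n})}$ for a single ideal $J = I \cap \mathcal{Z}$, so extension and contraction are mutually inverse bijections between ideals of $\mathcal{Z}$ and \emph{graded} two-sided ideals of $\mathcal{T}_\cham$.

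The content of the proposition is therefore the claim that every two-sided ideal of $\mathcal{T}_\cham$ is automatically $\overline{N}$-graded. To prove this, I would use that each $\Xt^{n'}$ is a unit, so conjugation by $\Xt^{n'}$ is an inner automorphism preserving any two-sided ideal $I$ and rescaling $\mathcal{Z} \cdot \Xt^{n(\overline{n})}$ by the unit $q^{2\{n', \overline{n}\}}$. Given $x = \sum_{\overline{n} \in S} c_{\overline{n}} \Xt^{n(\overline{n})} \in I$ with finite support $S$, nondegeneracy of $\{-,-\}$ on $\overline{N}$ lets me choose $n' \in N$ with $\{n', \overline{n}_0\} \neq \{n', \overline{n}\}$ for a fixed $\overline{n}_0 \in S$ and all other $\overline{n} \in S$; then
\begin{equation*}
\Xt^{n'} x \Xt^{-n'} - q^{2\{n', \overline{n}_0\}} x = \sum_{\overline{n} \neq \overline{n}_0} c_{\overline{n}}\bigl(q^{2\{n', \overline{n}\}} - q^{2\{n', \overline{n}_0\}}\bigr) \Xt^{n(\overline{n})}
\end{equation*}
lies in $I$ with strictly smaller support. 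Iterating and combining the outputs through a Vandermonde argument (carried out over the fraction field of $R[q^{\pm 1/d}]$ and then returned to $\mathcal{T}_\cham$) extracts each summand $c_{\overline{n}} \Xt^{n(\overline{n})}$ individually, and right-multiplying by the unit $\Xt^{-n(\overline{n})}$ places $c_{\overline{n}} \in I \cap \mathcal{Z}$, yielding $I \subseteq (I \cap \mathcal{Z}) \mathcal{T}_\cham$ with the reverse inclusion automatic.

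Finally, the Spec bijection follows by verifying that primality is preserved in both directions: if $I \in \Spec(\mathcal{T}_\cham)$ and $ab \in I \cap \mathcal{Z}$ with $a, b \in \mathcal{Z}$, then $(a \mathcal{T}_\cham)(b \mathcal{T}_\cham) = ab\, \mathcal{T}_\cham \subseteq I$ forces $a \in I$ or $b \in I$, hence $a$ or $b$ into $I \cap \mathcal{Z}$; conversely, primality of $J \subseteq \mathcal{Z}$ transfers to $J \mathcal{T}_\cham$ by contracting factor ideals $I_1 I_2 \subseteq J \mathcal{T}_\cham$ to $(I_1 \cap \mathcal{Z})(I_2 \cap \mathcal{Z}) \subseteq J$ and invoking the bijection already established. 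The main obstacle I anticipate is the Vandermonde step: the scalars $q^{2\{n', \overline{n}\}} - q^{2\{n', \overline{n}'\}}$ are nonzero but not units in $R[q^{\pm 1/d}]$, so the iterative subtraction above only recovers homogeneous pieces up to a nonzero polynomial factor, and eliminating that factor requires either a careful induction on support size that absorbs the denominators or a torsion-freeness argument for $\mathcal{T}_\cham / I$ over $R[q^{\pm 1/d}]$.
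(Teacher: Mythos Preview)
Your strategy coincides with the paper's: both grade $\mathcal{T}_\cham$ by $\overline{N}=N/N_0$ (the paper uses the isomorphic group $p^*(N)$), identify $\mathcal{Z}$ with the degree-zero component, and reduce everything to the claim that two-sided ideals are automatically graded; the primality transfers you sketch are essentially the paper's as well.

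The divergence is at the gradedness step. The paper simply asserts ``since $J$ is two-sided, $J$ is $p^*(N)$-graded,'' whereas you attempt to justify this via conjugation and a Vandermonde-type elimination, correctly flagging that the scalars $q^a-q^b$ are not units in $R[q^{\pm 1/d}]$. That obstacle is genuine and neither of your proposed fixes can work in general. Take $R=\C$, $N=\Z^2$ with $\{e_1,e_2\}=1$, $d=1$, and let $I\subset\mathcal{T}_\cham$ be the two-sided ideal generated by $X_1-1$. Conjugation by $X_2$ gives $q^{-2}X_1-1\in I$, hence $q^2-1\in I$; the homomorphisms $q\mapsto\pm 1$, $X_1\mapsto 1$, $X_2\mapsto X_2$ to $\C[X_2^{\pm 1}]$ show $I\cap\mathcal{Z}\subseteq(q-1)\cap(q+1)=(q^2-1)$, so $I\cap\mathcal{Z}=(q^2-1)$. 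But the image of $X_1-1$ in $\mathcal{T}_\cham/(q^2-1)\mathcal{T}_\cham\cong\bigl(\C[q]/(q^2-1)\bigr)[X_1^{\pm 1},X_2^{\pm 1}]$ is nonzero, so $I\supsetneq (I\cap\mathcal{Z})\mathcal{T}_\cham$ and $I$ is not graded. Thus, read literally over $R[q^{\pm 1/d}]$, the stated bijection fails; your Vandermonde argument (and the paper's unproved assertion) becomes valid once one works over the fraction field, where every $q^a-q^b$ with $a\neq b$ is invertible, which is the setting the paper is implicitly importing from Berenstein--Zelevinsky.
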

\begin{proof}

Define the lattice map
\eqn{p^*: N &\to M^\circ\\
n &\mapsto \lrc{n, \ \cdot\ }.\footnotemark }
\footnotetext{A more general $p^*$ map was defined in Section~\ref{sec:cluster-scat}.} 
Now for any $m \in p^*(N)$ consider the set
\eqn{\mathcal{T}_{\cham, m;q}=\left\{ f \in \mathcal{T}_{\cham;q} \,:\, \lrp{\Xt_{\cham}^{\cv_i}}f\lrp{{\Xt_{\cham}}^{\cv_i}}^{-1}=q^{2 \lra{\cv_i,m}}f \;\mathrm{for}\; i \in I \right\}. }
(Note that the pairing $ \lra{\cv_i,m}$ takes values in $\frac{1}{d} \Z$.) 
Next, observe that $\mathcal{T}_{\cham,m;q}$ is an $R_q$-module with generating set $\left\{ \Xt_{\cham}^n \,:\, -p^*(n) = m \right\}$.
Indeed,  if $\Xt_{\cham}^n$ is such that $-p^*(n)=m$, then
\eqn{\lrp{\Xt_{\cham}^{\cv_i}}\Xt_{\cham}^n\lrp{\Xt_{\cham}^{\cv_i}}^{-1} 
&=q^{-2\lrc{n,\cv_i}}\Xt_{\cham}^n \Xt_{\cham}^{\cv_i} \lrp{\Xt_{\cham}^{\cv_i}}^{-1} \\
&=q^{-2\lra{\cv_{i},p^*(n)}}\Xt_{\cham}^n \\
&=q^{2\lra{\cv_{i},m}}\Xt_{\cham}^n.}
Now, let $f\in \mathcal{T}_{\cham,m;q}$. 
Since $f=\sum_n \lambda_{n}\Xt_{\cham}^n$ and by hypothesis
\eqn{q^{2\lra{\cv_i,m}}f
&=\Xt_{\cham}^{\cv_i} \lrp{\sum \lambda_{n}\Xt_{\cham}^n}\lrp{\Xt_{\cham}^{\cv_i}}^{-1} \\
&=\sum \lambda_{n}\Xt_{\cham}^{\cv_i}\Xt_{\cham}^n\lrp{\Xt_{\cham}^{\cv_i}}^{-1}\\
&=\sum \lambda_{n} q^{2\lra{\cv_i,-p^*(n)}}\Xt_{\cham}^n,
}
then it must hold that $\lra{\cv_i,-p^*(n)}=\lra{\cv_i,m}$ for each index in the sum. 
Consequently,  $\mathcal{T}_{\cham;q}$ is graded by $p^*(N)$: $\displaystyle{\mathcal{T}_{\cham;q}=\bigoplus_{m\in p^*(N)} \mathcal{T}_{\cham,m;q}}$. 
Using this grading, it is immediate that $\mathcal{Z}=\mathcal{T}_{\cham,0;q}$ and also that, if $I$ is an ideal of $\mathcal{Z}$, then the contraction in $\mathcal{Z}$ of the extension $I\mathcal{T}_{\cham;q}=\mathcal{T}_{\cham;q}I$ is equal to $I$. That is, extension followed by contraction is the identity on ideals of $\mathcal{Z}$.

Now, let $J$ be a two-sided ideal of $\mathcal{T}_{\cham;q}$. 
Clearly, $\lrp{J\cap\mathcal{Z}}\mathcal{T}_{\cham;q}\subseteq J$. 
Suppose $S:=J \setminus \lrp{J\cap\mathcal{Z}}\mathcal{T}_{\cham;q}$ is non-empty and let $f = \sum_{m\in p^*(N)} z_m \Xt^{\varphi(m)}_{\cham}$, with $\varphi$ any section of $p^*$, be an element of $S$ with the minimal number of non-zero coefficients $z_m \in \mathcal{Z}$.
As each $\Xt^{\varphi(m)}_{\cham}$ is a unit in $ \mathcal{T}_{\cham;q}$, we are free to assume the constant term $z_0$ is non-zero.
Say $f= z_0 + z_{m_1} \Xt^{\varphi(m_1)}_{\cham} + \cdots + z_{m_r} \Xt^{\varphi(m_r)}_{\cham}$.
Note that $r\geq 1$-- otherwise $f \in \lrp{J\cap\mathcal{Z}}\mathcal{T}_{\cham;q}$.
Now fix some $n\in N$ and consider the commutator
\eqn{\lrb{\Xt^{n}_{\cham}, f } &= \lrb{\Xt^{n}_{\cham},z_0} + \lrb{\Xt^{n}_{\cham},z_{m_1} \Xt^{\varphi(m_1)}_{\cham}} + \cdots + \lrb{\Xt^{n}_{\cham},z_{m_r} \Xt^{\varphi(m_r)}_{\cham}}\\
&= z_{m_1} \lrb{\Xt^{n}_{\cham}, \Xt^{\varphi(m_1)}_{\cham}} + \cdots + z_{m_r} \lrb{\Xt^{n}_{\cham}, \Xt^{\varphi(m_r)}_{\cham}}\\
&= \lrp{q^{\lrc{n,\varphi(m_1)}} - q^{-\lrc{n,\varphi(m_1)}}} z_{m_1} \Xt^{n+\varphi(m_1)}_{\cham} + \cdots +  \lrp{q^{\lrc{n,\varphi(m_r)}} - q^{-\lrc{n,\varphi(m_r)}}} z_{m_r} \Xt^{n+\varphi(m_r)}_{\cham} .}
Observe that for each $m_i$, we can choose $n$ such that $\lrc{n,\varphi(m_i)}$ is non-zero.
As $J$ is a two-sided ideal of $\mathcal{T}_{\cham;q}$ and $f \in J$, the commutator $f':=\lrb{\Xt^{n}_{\cham}, f }$ must be in $J$ also.
Since $f'$ has fewer non-zero coefficients than $f$, it must lie in $\lrp{J\cap \mathcal{Z}}\mathcal{T}_{\cham;q}$.
We can write
\eqn{f'= \sum_{m \in p^*(N)}w_{m} \Xt^{\varphi(m)}_{\cham} }
for some $w_{m} \in \lrp{J\cap \mathcal{Z}}$.
Then
\eqn{ \sum_{m \in p^*(N)}w_{m} \Xt^{\varphi(m)}_{\cham}  =  \lrp{q^{\lrc{n,\varphi(m_1)}} - q^{-\lrc{n,\varphi(m_1)}}} z_{m_1} \Xt^{n+\varphi(m_1)}_{\cham} + \cdots +  \lrp{q^{\lrc{n,\varphi(m_r)}} - q^{-\lrc{n,\varphi(m_r)}}} z_{m_r} \Xt^{n+\varphi(m_r)}_{\cham} }
and by linear independence of $\lrc{\Xt^{n}_{\cham}: n\in N}$, the non-zero summands on the left correspond precisely to non-zero summands on the right.
Let $w_{m}\Xt^{\varphi(m)}$ be some such non-zero summand, say $w_{m}\Xt^{\varphi(m)}_{\cham} = \lrp{q^{\lrc{n,\varphi(m_i)}} - q^{-\lrc{n,\varphi(m_i)}}} z_{m_i} \Xt^{n+\varphi(m_i)}_{\cham}$.
Then $\Xt^{\varphi(m)}_{\cham}$ and $\Xt^{n+\varphi(m_i)}_{\cham}$ differ by a unit in $\mathcal{Z}$, namely a factor of $\Xt^{\varphi(m) - \lrp{n+\varphi(m_i)}}_{\cham}$, and
\eqn{w_m = \lrp{q^{\lrc{n,\varphi(m_i)}} - q^{-\lrc{n,\varphi(m_i)}}} z_{m_i} \Xt^{\varphi(m) - \lrp{n+\varphi(m_i)}}_{\cham}.}
So, $w_{m}$ is the product of $z_{m_i}$ and a unit in $\mathcal{Z}$.
It follows that $z_{m_i}$ is itself in $\lrp{J\cap \mathcal{Z}}$ since $w_m$ is.
This holds for all $m_i$ so $f$ must be in $\lrp{J\cap \mathcal{Z}}\mathcal{T}_{\cham;q}$, contradicting the assumption.
So, $J = \lrp{J\cap\mathcal{Z} } \mathcal{T}_{\cham;q}$ and contraction followed by extension is the identity on two-sided ideals of $\mathcal{T}_{\cham;q}$. 

For the next claim, using the fact that the contraction and extension give the bijection between the sets of two-sided  ideals of $\mathcal{T}_{\cham;q}$ and the ideals of $\mathcal{Z}$, it is enough to prove the following statements:
\begin{enumerate}
    \item \label{it:non-com-to-com}If $P\in\Spec\lrp{\mathcal{T}_{\cham;q}}$, then $P\cap\mathcal{Z}\in\Spec\lrp{\mathcal{Z}}$, and
    \item \label{it:com-to-non-com}If $Q\in\Spec\lrp{\mathcal{Z}}$, then $Q\mathcal{T}_{\cham;q}\in\Spec\lrp{\mathcal{T}_{\cham;q}}$.
\end{enumerate}
For (\ref{it:non-com-to-com}), let $P\in\Spec\lrp{\mathcal{T}_{\cham;q}}$. Consider ideals $I_1$ and $I_2$ of $\mathcal{Z}$ such that $I_1I_2\subseteq P\cap\mathcal{Z}$. Then, using the fact that contraction followed by extension is the identity,
\eqn{
\lrp{I_1\mathcal{T}_{\cham;q}}\lrp{I_2\mathcal{T}_{\cham;q}}
=\lrp{I_1I_2}\mathcal{T}_{\cham;q}
\subseteq 
\lrp{P\cap\mathcal{Z}}\mathcal{T}_{\cham;q}
=P.
}
The hypothesis $P$ prime implies that $I_1\mathcal{T}_{\cham;q}\subseteq P$ or $I_2\mathcal{T}_{\cham;q}\subseteq P$. Hence, \eqn{
\lrp{I_1\mathcal{T}_{\cham;q}}\cap\mathcal{Z}\subseteq P\cap\mathcal{Z}
\;\;\;\mbox{or}\;\;\;
\lrp{I_2\mathcal{T}_{\cham;q}}\cap\mathcal{Z}\subseteq P\cap\mathcal{Z}
}
and since extension followed by contraction is the identity, we conclude that $I_1\subseteq P\cap\mathcal{Z}$ or $I_2\subseteq P\cap\mathcal{Z}$. So, $P\cap\mathcal{Z}$ is a prime ideal of $\mathcal{Z}$.

For (\ref{it:com-to-non-com}), let $Q\in\Spec\lrp{\mathcal{Z}}$. Consider two-sided ideals $J_1$ and $J_2$ of $\mathcal{T}_{\cham;q}$ such that $J_1J_2\subseteq Q\mathcal{T}_{\cham;q}$. Since extension followed by contraction is the identity,
\eqn{
\lrp{J_1\cap\mathcal{Z}} \lrp{J_2\cap\mathcal{Z}} 
\subseteq
\lrp{J_1J_2}\cap\mathcal{Z}
\subseteq
\lrp{Q\mathcal{T}_{\cham;q}}\cap\mathcal{Z}
=Q.
}
From the hypothesis $Q$ prime it follows that 
\eqn{
\lrp{J_1\cap\mathcal{Z}}\subseteq Q
\;\;\;\mbox{or}\;\;\;
\lrp{J_2\cap\mathcal{Z}}\subseteq Q.
}
Finally, since contraction followed by extension is the identity, we conclude that $J_1\subseteq Q\mathcal{T}_{\cham;q}$ or $J_2\subseteq Q\mathcal{T}_{\cham;q}$. Hence, $Q\mathcal{T}_{\cham;q}$ is a prime ideal of $\mathcal{T}_{\cham;q}$.
\end{proof}

\subsection{Quantum mutation of the \texorpdfstring{$\Xfsp$}{X} family}\label{sec:qmut}

We now describe explicitly the quantum mutation map with coefficients $\mu_{k,\vb{t};\cham}^q$.
It is analogous to  \cite[Section~3.3]{FG_cluster_ensembles} but includes coefficients. For simplicity, we denote $q_k:=q^{1/d_k}$.
First, we consider the quantum dilogarithm with coefficients $\vb{t}$. 
\eq{
\vb{\Psi}_{q_k,\vb{t}}\lrp{\Xt_{k;\cham}}:=
\vb{\Psi}_{q_k} \lrp{\frac{\vb{t}^{\lrb{\cv_{k;\cham}}_+}}{\vb{t}^{\lrb{-\cv_{k;\cham}}_+}} \Xt_{k;\cham}}
}{eq:tdilog}
We then define $\mu_{k,\vb{t};\cham}^\sharp$ to be the automorphism of $\sK\lrp{\A^{\lrm{I}}_{M;\cham,q}\lrp{R}}$ given by conjugation by $\vb{\Psi}_{q_k,\vb{t}}\lrp{\Xt_{k;\cham}}$:
\eq{\mu_{k,\vb{t};\cham}^\sharp \lrp{x} := \vb{\Psi}_{q_k} \lrp{\frac{\vb{t}^{\lrb{\cv_{k;\cham}}_+}}{\vb{t}^{\lrb{-\cv_{k;\cham}}_+}} \Xt_{k;\cham}} x\   \vb{\Psi}_{q_k} \lrp{\frac{\vb{t}^{\lrb{\cv_{k;\cham}}_+}}{\vb{t}^{\lrb{-\cv_{k;\cham}}_+}} \Xt_{k;\cham}}^{-1}. }{eq:musharpt}

Next, we define
\eqn{
\mu_{k,\vb{t};\cham}'\lrp{\Xt^v_{\cham'}} = \vb{t}^{-\lrc{v,e_{k;\cham}}d_k \lrb{-\cv_{k;\cham}}_+} \Xt^v_{\cham}.
} 
Up to a certain scale factor,
$\mu_{k,\vb{t};\cham}':\sK\lrp{\A^{\lrm{I}}_{M;\cham',q}\lrp{R}}\rightarrow \sK\lrp{\A^{\lrm{I}}_{M;\cham,q}\lrp{R}}$ just re-expresses the coordinates $\lrc{\Xt_{i;\cham'}= \Xt^{e_{i;\cham'}}:i \in I}$ obtained by exponentiating the basis elements of $\vb{s}'$ in terms of the coordinates $\lrc{\Xt_{i;\cham}= \Xt^{e_{i;\cham}}:i \in I}$ obtained by exponentiating the basis elements of $\vb{s}$.
In terms of the two sets of cluster coordinates, $\mu_{k,\vb{t};\cham}'$ has form:
\eq{
\mu_{k,\vb{t};\cham}'\lrp{\Xt_{i;\cham'}} = \begin{cases} \Xt_{i;\cham}^{-1} &\text{ if } i=k,\\[5pt]
\vb{t}^{-\epsilon_{ik} \lrb{-\cv_{k;\cham}}_+} q^{-\widehat{\epsilon}_{ik}[\epsilon_{ik}]_+} \Xt_{i;\cham}\Xt_{k;\cham}^{[\epsilon_{ik}]_+} &\text{ if } i\not =k.\end{cases}
}{eq:muprimet}

\begin{lemma}\thlabel{qmusharpAlt}
The automorphism $\mu_k^{\sharp}$ is given in cluster coordinates by
\eq{
\mu_{k,\vb{t};\cham}^{\sharp}\lrp{\Xt_{i;\cham}} = \begin{cases}
\displaystyle{\Xt_{i;\cham}
\prod_{\ell=1}^{|\epsilon_{ik}|} \lrp{1 + \frac{\vb{t}^{\lrb{\cv_{k;\cham}}_+}}{\vb{t}^{\lrb{-\cv_{k;\cham}}_+}} q_k^{2\ell-1} \Xt_{k;\cham}}}
&\text{ if } \epsilon_{ik}\leq 0,\\[10pt]
\displaystyle{\Xt_{i;\cham}
\lrp{\prod_{\ell=1}^{\epsilon_{ik}} \lrp{1 + \frac{\vb{t}^{\lrb{\cv_{k;\cham}}_+}}{\vb{t}^{\lrb{-\cv_{k;\cham}}_+}} q_k^{1-2\ell} \Xt_{k;\cham}}}^{-1}} 
&\text{ if } \epsilon_{ik}\geq 0.\end{cases}
}{eq:musharpcluster}
\end{lemma}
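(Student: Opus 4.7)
The plan is to compute the conjugation action of the coefficient-weighted quantum dilogarithm directly on the generator $\Xt_{i;\cham}$ via a telescoping argument, generalizing the classical Fock--Goncharov calculation recalled in \eqref{eq:fg}. Write $Y_{k;\cham} := \frac{\vb{t}^{\lrb{\cv_{k;\cham}}_+}}{\vb{t}^{\lrb{-\cv_{k;\cham}}_+}}\Xt_{k;\cham}$, so that $\mu_{k,\vb{t};\cham}^\sharp(x) = \vb{\Psi}_{q_k}(Y_{k;\cham})\,x\,\vb{\Psi}_{q_k}(Y_{k;\cham})^{-1}$. The first step is to record the commutation of $Y_{k;\cham}$ with $\Xt_{i;\cham}$: because the coefficients $t_j$ lie in the center, $Y_{k;\cham}$ has the same $q$-commutation relations as $\Xt_{k;\cham}$, namely $Y_{k;\cham}\Xt_{i;\cham} = q^{-2\widehat{\epsilon}_{ik}}\Xt_{i;\cham}Y_{k;\cham} = q_k^{-2\epsilon_{ik}}\Xt_{i;\cham}Y_{k;\cham}$, where we used $\epsilon_{ik}=\widehat{\epsilon}_{ik}d_k$ and $q_k = q^{1/d_k}$.

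From this one deduces the single-factor intertwining identity
\eqn{(1+q_k^{2\ell-1}Y_{k;\cham})^{-1}\Xt_{i;\cham}=\Xt_{i;\cham}(1+q_k^{2\ell-1-2\epsilon_{ik}}Y_{k;\cham})^{-1}.}
Applying this factor-by-factor to $\vb{\Psi}_{q_k}(Y_{k;\cham}) = \prod_{\ell\geq 1}(1+q_k^{2\ell-1}Y_{k;\cham})^{-1}$ and reindexing $\ell\mapsto \ell+\epsilon_{ik}$ yields
\eqn{\vb{\Psi}_{q_k}(Y_{k;\cham})\,\Xt_{i;\cham} = \Xt_{i;\cham}\prod_{m=1-\epsilon_{ik}}^{\infty}(1+q_k^{2m-1}Y_{k;\cham})^{-1}.}
Multiplying on the right by $\vb{\Psi}_{q_k}(Y_{k;\cham})^{-1} = \prod_{\ell\geq 1}(1+q_k^{2\ell-1}Y_{k;\cham})$ then gives a semi-infinite product which telescopes, because all factors commute (they are polynomials in $Y_{k;\cham}$).

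The last step is the case split. If $\epsilon_{ik}\leq 0$, write $a = -\epsilon_{ik}\geq 0$ so that the lower limit $1-\epsilon_{ik}=1+a$; the common tail $\prod_{\ell\geq a+1}$ cancels and one is left with $\prod_{\ell=1}^{a}(1+q_k^{2\ell-1}Y_{k;\cham}) = \prod_{\ell=1}^{|\epsilon_{ik}|}(1+q_k^{2\ell-1}Y_{k;\cham})$, producing the first line of \eqref{eq:musharpcluster}. If $\epsilon_{ik}\geq 0$, write $b=\epsilon_{ik}$; the reindexed inverse product now acquires the extra finite block $\prod_{m=1-b}^{0}(1+q_k^{2m-1}Y_{k;\cham})^{-1}$, and the common tail again cancels with $\vb{\Psi}_{q_k}(Y_{k;\cham})^{-1}$. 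Setting $\ell = 1-m$ converts this block to $\bigl(\prod_{\ell=1}^{\epsilon_{ik}}(1+q_k^{1-2\ell}Y_{k;\cham})\bigr)^{-1}$, yielding the second line.

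The only genuine subtlety is the telescoping of the semi-infinite products, where one must check that each reindexing and cancellation happens inside the appropriate completion so that the formal manipulations are justified in $\sK(\A^{|I|}_{M;\cham,q}(R))$; this is standard because $Y_{k;\cham}$ commutes with itself and, as a polynomial in $\Xt_{k;\cham}$, the relevant rational factors all live in the (commutative) subalgebra generated by $Y_{k;\cham}$. Substituting $Y_{k;\cham} = \frac{\vb{t}^{\lrb{\cv_{k;\cham}}_+}}{\vb{t}^{\lrb{-\cv_{k;\cham}}_+}}\Xt_{k;\cham}$ back into both cases recovers exactly the formula claimed in \eqref{eq:musharpcluster}.
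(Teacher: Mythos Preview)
Your proof is correct and essentially follows the same route as the paper. The paper first commutes $\vb{\Psi}_{q_k}(Y_{k;\cham})$ past $\Xt_{i;\cham}$ via the power-series expansion to obtain $\Xt_{i;\cham}\,\vb{\Psi}_{q_k}(q_k^{-2\epsilon_{ik}}Y_{k;\cham})\,\vb{\Psi}_{q_k}(Y_{k;\cham})^{-1}$, and then invokes the difference relation $\vb{\Psi}_q(q^{\pm 2}x)=(1+q^{\pm 1}x)^{\pm 1}\vb{\Psi}_q(x)$ iteratively; your telescoping of the infinite product is exactly that difference relation unpacked factor-by-factor, so the two arguments differ only in presentation.
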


\begin{proof}
This is \cite[Lemma~3.4]{FG_cluster_ensembles} with $X_k$ replaced by $\displaystyle{\frac{\vb{t}^{\lrb{\cv_{k;\cham}}_+}}{\vb{t}^{\lrb{-\cv_{k;\cham}}_+}} \Xt_{k;\cham}}$.
We reproduce the argument here for the reader's convenience.
Recall that 
$q^{-\widehat{\epsilon}_{ki}}\Xt_{k;\cham}\Xt_{i;\cham} =q^{-\widehat{\epsilon}_{ik}}\Xt_{i;\cham}\Xt_{k;\cham}$, 
so 
\eqn{\Xt_{k;\cham}\Xt_{i;\cham} &= q^{-2\widehat{\epsilon}_{ik}}\Xt_{i;\cham}\Xt_{k;\cham}\\
&= q_k^{-2\epsilon_{ik}}\Xt_{i;\cham}\Xt_{k;\cham}.}
Then writing $\vb{\Psi}_{q_k} (x)$ as $\displaystyle{\sum_{\ell=0}^\infty c_{\ell} x^{\ell}}$, we have
\eqn{\vb{\Psi}_{q_k} \lrp{\frac{\vb{t}^{\lrb{\cv_{k;\cham}}_+}}{\vb{t}^{\lrb{-\cv_{k;\cham}}_+}} \Xt_{k;\cham}} \Xt_{i;\cham} 
&= \sum_{\ell=0}^\infty c_{\ell} \lrp{\frac{\vb{t}^{\lrb{\cv_{k;\cham}}_+}}{\vb{t}^{\lrb{-\cv_{k;\cham}}_+}} \Xt_{k;\cham}}^{\ell} \Xt_{i;\cham}\\
&= \sum_{\ell=0}^\infty c_{\ell}  \Xt_{i;\cham} \lrp{ q_k^{-2 \epsilon_{ik}} \frac{\vb{t}^{\lrb{\cv_{k;\cham}}_+}}{\vb{t}^{\lrb{-\cv_{k;\cham}}_+}} \Xt_{k;\cham}}^{\ell} \\
&= \Xt_{i;\cham} \vb{\Psi}_{q_k} \lrp{q_k^{-2 \epsilon_{ik}} \frac{\vb{t}^{\lrb{\cv_{k;\cham}}_+}}{\vb{t}^{\lrb{-\cv_{k;\cham}}_+}} \Xt_{k;\cham}}.}
So,
\eq{ \mu_{k,\vb{t};\cham}^{\sharp}\lrp{\Xt_{i;\cham}} 
&= \vb{\Psi}_{q_k} \lrp{\frac{\vb{t}^{\lrb{\cv_{k;\cham}}_+}}{\vb{t}^{\lrb{-\cv_{k;\cham}}_+}} \Xt_{k;\cham}} \Xt_{i;\cham}\,  \vb{\Psi}_{q_k} \lrp{\frac{\vb{t}^{\lrb{\cv_{k;\cham}}_+}}{\vb{t}^{\lrb{-\cv_{k;\cham}}_+}} \Xt_{k;\cham}}^{-1}\\
&= \Xt_{i;\cham}\, \vb{\Psi}_{q_k} \lrp{q_k^{-2 \epsilon_{ik}} \frac{\vb{t}^{\lrb{\cv_{k;\cham}}_+}}{\vb{t}^{\lrb{-\cv_{k;\cham}}_+}} \Xt_{k;\cham}} \vb{\Psi}_{q_k} \lrp{\frac{\vb{t}^{\lrb{\cv_{k;\cham}}_+}}{\vb{t}^{\lrb{-\cv_{k;\cham}}_+}} \Xt_{k;\cham}}^{-1} .}{eq:sharpqcom}
Next, we use the quantum dilogarithm difference relation \cite[Equation~52]{FG_cluster_ensembles}:
\eq{\vb{\Psi}_{q}\lrp{q^2 x} = \lrp{1+ q x}\vb{\Psi}_{q}(x), \qquad \vb{\Psi}_{q}\lrp{q^{-2} x} = \lrp{1+ q^{-1} x}^{-1}\vb{\Psi}_{q}(x) .}{eq:qdiff}
Combining \eqref{eq:qdiff} and \eqref{eq:sharpqcom} yields the claim.
\end{proof}

We now define the quantum $\cX$-mutation with coefficients $\mu_{k,\vb{t};\cham}^q: \sK\lrp{\A^{\lrm{I}}_{M;\cham',q}\lrp{R}} \to \sK\lrp{\A^{\lrm{I}}_{M;\cham,q}\lrp{R}} $  to be the composition $\mu_{k,\vb{t};\cham}^q:= \mu_{k,\vb{t};\cham}^\sharp \circ \mu_{k,\vb{t};\cham}'$.

\begin{prop}\thlabel{prop:qdmutation}
The quantum $\cX$-mutation with coefficients $\mu_{k,\vb{t};\cham}^q: \sK\lrp{\A^{\lrm{I}}_{M;\cham',q}\lrp{R}} \to \sK\lrp{\A^{\lrm{I}}_{M;\cham,q}\lrp{R}} $ is given in cluster coordinates by
\eq{
\mu_{k,\vb{t};\cham}^q \lrp{\Xt_{i;\cham'}} = \begin{cases} \Xt_{i;\cham}^{-1} &\text{ if } i=k,\\
& \\
\Xt_{i;\cham} \lrp{\displaystyle\prod_{\ell=1}^{|\epsilon_{ik}|} \lrp{\vb{t}^{[\sgn{\lrp{\epsilon_{ik}}}\vb{c}_{k;\cham}]_+}+\vb{t}^{[-\sgn{\lrp{\epsilon_{ik}}}\vb{c}_{k;\cham}]_+} q_k^{2\ell-1} \Xt_{k;\cham}^{-\sgn{\lrp{\epsilon_{ik}}}}}}^{-\sgn{\lrp{\epsilon_{ik}}}}  &\text{ if } i\not =k .\end{cases}
}{eq:qdmutation}
\end{prop}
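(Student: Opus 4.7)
The plan is to unpack the definition $\mu_{k,\vb{t};\cham}^q := \mu_{k,\vb{t};\cham}^\sharp \circ \mu_{k,\vb{t};\cham}'$ and directly substitute the two closed-form descriptions we already have: formula \eqref{eq:muprimet} for $\mu_{k,\vb{t};\cham}'$ and \thref{qmusharpAlt} (formula \eqref{eq:musharpcluster}) for $\mu_{k,\vb{t};\cham}^\sharp$ in cluster coordinates. Everything then reduces to a bookkeeping computation, which splits cleanly into cases according to the sign of $\epsilon_{ik}$.

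First I would handle the trivial case $i = k$. Here $\mu_{k,\vb{t};\cham}'(\Xt_{k;\cham'}) = \Xt_{k;\cham}^{-1}$, and since $\mu_{k,\vb{t};\cham}^\sharp$ is conjugation by $\vb{\Psi}_{q_k,\vb{t}}(\Xt_{k;\cham})$, which is a formal power series in $\Xt_{k;\cham}$, it fixes both $\Xt_{k;\cham}$ and $\Xt_{k;\cham}^{-1}$. This yields $\mu_{k,\vb{t};\cham}^q(\Xt_{k;\cham'}) = \Xt_{k;\cham}^{-1}$ as required.

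For $i \neq k$, I would split according to the sign of $\epsilon_{ik}$. When $\epsilon_{ik} \leq 0$, $[\epsilon_{ik}]_+ = 0$ so $\mu'$ contributes only the coefficient $\vb{t}^{-\epsilon_{ik}[-\cv_{k;\cham}]_+} = \vb{t}^{|\epsilon_{ik}|[-\cv_{k;\cham}]_+}$ times $\Xt_{i;\cham}$. Applying the $\epsilon_{ik} \leq 0$ branch of \eqref{eq:musharpcluster} and distributing the $\vb{t}^{|\epsilon_{ik}|[-\cv_{k;\cham}]_+}$ factor into the $|\epsilon_{ik}|$ factors of the product (one $\vb{t}^{[-\cv_{k;\cham}]_+}$ per factor), one immediately obtains
\[
\Xt_{i;\cham} \prod_{\ell=1}^{|\epsilon_{ik}|} \lrp{\vb{t}^{[-\cv_{k;\cham}]_+} + \vb{t}^{[\cv_{k;\cham}]_+} q_k^{2\ell-1}\Xt_{k;\cham}},
\]
which is exactly \eqref{eq:qdmutation} for $\sgn(\epsilon_{ik}) = -1$.

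The main obstacle is the $\epsilon_{ik} > 0$ case, which requires more care because of the extra factor $\Xt_{k;\cham}^{[\epsilon_{ik}]_+} = \Xt_{k;\cham}^{\epsilon_{ik}}$ from \eqref{eq:muprimet} and the $q^{-\widehat{\epsilon}_{ik}\epsilon_{ik}}$ commutation constant. Here I would use the identity $(1 + c\Xt_{k;\cham})^{-1}\Xt_{k;\cham} = (c + \Xt_{k;\cham}^{-1})^{-1}$, which holds because $\Xt_{k;\cham}$ commutes with any series in itself. Applying this inside the product \eqref{eq:musharpcluster} once for each of the $\epsilon_{ik}$ copies of $\Xt_{k;\cham}$, I would rewrite
\[
\prod_{\ell=1}^{\epsilon_{ik}} \lrp{1 + \frac{\vb{t}^{[\cv_{k;\cham}]_+}}{\vb{t}^{[-\cv_{k;\cham}]_+}} q_k^{1-2\ell}\Xt_{k;\cham}}^{-1} \Xt_{k;\cham}^{\epsilon_{ik}}
= \prod_{\ell=1}^{\epsilon_{ik}} \lrp{\frac{\vb{t}^{[\cv_{k;\cham}]_+}}{\vb{t}^{[-\cv_{k;\cham}]_+}} q_k^{1-2\ell} + \Xt_{k;\cham}^{-1}}^{-1}.
\]
Then I would absorb the prefactor $\vb{t}^{-\epsilon_{ik}[-\cv_{k;\cham}]_+}$ (distributing one copy of $\vb{t}^{[-\cv_{k;\cham}]_+}$ into the denominator of each of the $\epsilon_{ik}$ factors), obtaining $\prod_{\ell=1}^{\epsilon_{ik}}(\vb{t}^{[\cv_{k;\cham}]_+} q_k^{1-2\ell} + \vb{t}^{[-\cv_{k;\cham}]_+}\Xt_{k;\cham}^{-1})^{-1}$. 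A change of summation variable $\ell \mapsto \epsilon_{ik}+1-\ell$ converts $q_k^{1-2\ell}$ to $q_k^{2\ell-1-2\epsilon_{ik}}$; finally, absorbing one power of $q_k^{-2\widehat{\epsilon}_{ik}} = q^{-2\widehat{\epsilon}_{ik}}$ (coming from the $q^{-\widehat{\epsilon}_{ik}\epsilon_{ik}}$ factor, accounting for commuting $\Xt_{i;\cham}$ with $\Xt_{k;\cham}^{\epsilon_{ik}}$) into each factor of the product realigns the $q_k$ exponents back to $q_k^{2\ell-1}$, giving the stated formula with $\sgn(\epsilon_{ik}) = +1$. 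Uniformizing the two cases by writing the exponents as $[\sgn(\epsilon_{ik})\cv_{k;\cham}]_+$ yields \eqref{eq:qdmutation}.
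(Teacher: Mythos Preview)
Your overall strategy is exactly the paper's: compose $\mu_{k,\vb{t};\cham}^\sharp$ with $\mu_{k,\vb{t};\cham}'$ using \thref{qmusharpAlt} and \eqref{eq:muprimet}, and split on the sign of $\epsilon_{ik}$. The cases $i=k$ and $\epsilon_{ik}\leq 0$ are handled correctly and match the paper verbatim. For $\epsilon_{ik}>0$, your Steps~1--2 are fine and land you at
\[
q^{-\widehat{\epsilon}_{ik}\epsilon_{ik}}\,\Xt_{i;\cham}\,
\prod_{\ell=1}^{\epsilon_{ik}}\lrp{\vb{t}^{[\cv_{k;\cham}]_+} q_k^{1-2\ell} + \vb{t}^{[-\cv_{k;\cham}]_+}\Xt_{k;\cham}^{-1}}^{-1},
\]
which is equivalent to the paper's intermediate expression.

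The bookkeeping in your Steps~3--4, however, is garbled. The identity $q_k^{-2\widehat{\epsilon}_{ik}} = q^{-2\widehat{\epsilon}_{ik}}$ is false unless $d_k=1$ (recall $q_k = q^{1/d_k}$, so $q^{\widehat{\epsilon}_{ik}} = q_k^{d_k\widehat{\epsilon}_{ik}} = q_k^{\epsilon_{ik}}$). More importantly, absorbing a scalar into a factor $(\,a+b\,)^{-1}$ multiplies \emph{both} $a$ and $b$, so you cannot ``realign'' only the $q_k$-power on the $\vb{t}^{[\cv]_+}$ term while leaving the $\Xt_{k;\cham}^{-1}$ term untouched. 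The parenthetical about ``commuting $\Xt_{i;\cham}$ with $\Xt_{k;\cham}^{\epsilon_{ik}}$'' is also a red herring: $\Xt_{i;\cham}$ is already on the left, and the prefactor $q^{-\widehat{\epsilon}_{ik}\epsilon_{ik}}$ in \eqref{eq:muprimet} is there precisely so that $q^{-\widehat{\epsilon}_{ik}[\epsilon_{ik}]_+}\Xt_{i;\cham}\Xt_{k;\cham}^{[\epsilon_{ik}]_+} = \Xt^{e_{i;\cham}+[\epsilon_{ik}]_+e_{k;\cham}}$; no further commutation is needed.

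The correct finish (and what the paper does) is to skip the index change entirely: from the display above, factor $q_k^{1-2\ell}$ out of each bracket in the denominator, giving
\[
q^{-\widehat{\epsilon}_{ik}\epsilon_{ik}}\cdot\Bigl(\prod_{\ell=1}^{\epsilon_{ik}} q_k^{2\ell-1}\Bigr)\cdot
\Xt_{i;\cham}\,\prod_{\ell=1}^{\epsilon_{ik}}\lrp{\vb{t}^{[\cv_{k;\cham}]_+} + \vb{t}^{[-\cv_{k;\cham}]_+} q_k^{2\ell-1}\Xt_{k;\cham}^{-1}}^{-1},
\]
and then use $\prod_{\ell=1}^{\epsilon_{ik}} q_k^{2\ell-1} = q_k^{\epsilon_{ik}^2}$ together with $q^{-\widehat{\epsilon}_{ik}\epsilon_{ik}} = q_k^{-\epsilon_{ik}^2}$ to see that the scalars cancel.
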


\begin{proof}
We use \thref{qmusharpAlt} to explicitly compute the composition $\mu_{k,\vb{t};\cham}^q:= \mu_{k,\vb{t};\cham}^\sharp \circ \mu_{k,\vb{t};\cham}'$.
First, if $i=k$ the result is immediate.
Next, if $i\neq k$ but $\epsilon_{ik} = 0$, \eqref{eq:muprimet} reduces to 
$\displaystyle{\mu_{k,\vb{t};\cham}' \lrp{\Xt_{i;\cham'}} = \Xt_{i;\cham}}$ and the product in \eqref{eq:musharpcluster} is empty.
So in this case $\displaystyle{\mu_{k,\vb{t};\cham}^q\lrp{\Xt_{i;\cham'}}= \Xt_{i;\cham}}$ as claimed.
Two cases remain:

{\bf Case 1}: $\epsilon_{ik}<0$.
\begin{align*}
&\mu_{k,\vb{t};\cham}^{\sharp}\lrp{\vb{t}^{-\epsilon_{ik} \lrb{-\cv_{k;\cham}}_+} q^{-\widehat{\epsilon}_{ik}[\epsilon_{ik}]_+}  \Xt_{i;\cham}\Xt_{k;\cham}^{[\epsilon_{ik}]_+} }\\
=& \vb{t}^{-\epsilon_{ik} \lrb{-\cv_{k;\cham}}_+} \mu_{k,\vb{t};\cham}^{\sharp}\lrp{ \Xt_{i;\cham}}\\
=& \vb{t}^{\lrm{\epsilon_{ik}} \lrb{-\cv_{k;\cham}}_+} \Xt_{i;\cham}
\prod_{\ell=1}^{|\epsilon_{ik}|} \lrp{1 + \frac{\vb{t}^{\lrb{\cv_{k;\cham}}_+}}{\vb{t}^{\lrb{-\cv_{k;\cham}}_+}} q_k^{2\ell-1} \Xt_{k;\cham}}
\\
=& \Xt_{i;\cham}
\prod_{\ell=1}^{|\epsilon_{ik}|} \lrp{\vb{t}^{\lrb{-\cv_{k;\cham}}_+} + \vb{t}^{\lrb{\cv_{k;\cham}}_+} q_k^{2\ell-1} \Xt_{k;\cham}}.
\end{align*}

{\bf Case 2}: $\epsilon_{ik}>0$. Recall that by definition $\epsilon_{ik}=\{e_i,e_k\}d_k$, $q_k=q^{1/d_k}$ and $\widehat{\epsilon}_{ik}=\{e_i,e_k\}$.
\eqn{
&\mu_{k,\vb{t};\cham}^{\sharp}\lrp{\vb{t}^{-\epsilon_{ik} \lrb{-\cv_{k;\cham}}_+}  q^{-\widehat{\epsilon}_{ik}[\epsilon_{ik}]_+} \Xt_{i;\cham}\Xt_{k;\cham}^{[\epsilon_{ik}]_+} }\\
=& \vb{t}^{-\epsilon_{ik} \lrb{-\cv_{k;\cham}}_+} q^{-\widehat{\epsilon}_{ik}\, \epsilon_{ik}}  \mu_{k,\vb{t};\cham}^{\sharp}\lrp{ \Xt_{i;\cham}\Xt_{k;\cham}^{\epsilon_{ik}} }\\
=& \vb{t}^{-\epsilon_{ik} \lrb{-\cv_{k;\cham}}_+} q^{-\widehat{\epsilon}_{ik}\, \epsilon_{ik}}  
\Xt_{i;\cham}\Xt_{k;\cham}^{\epsilon_{ik}} 
\lrp{\prod_{\ell=1}^{\epsilon_{ik}} \lrp{1 + \frac{\vb{t}^{\lrb{\cv_{k;\cham}}_+}}{\vb{t}^{\lrb{-\cv_{k;\cham}}_+}} q_k^{1-2\ell} \Xt_{k;\cham}}}^{-1}\\
=& 
\Xt_{i;\cham}
\lrp{\prod_{\ell=1}^{\epsilon_{ik}} q^{\widehat{\epsilon}_{ik}}\Xt_{k;\cham}^{-1} \lrp{\vb{t}^{\lrb{-\cv_{k;\cham}}_+}  + \vb{t}^{\lrb{\cv_{k;\cham}}_+} q_k^{1-2\ell} \Xt_{k;\cham}}}^{-1}\\
=& \Xt_{i;\cham}
\lrp{\prod_{\ell=1}^{\epsilon_{ik}} q_k^{\epsilon_{ik}} q_k^{1-2\ell} \lrp{ \vb{t}^{\lrb{-\cv_{k;\cham}}_+} q_k^{2\ell-1} \Xt_{k;\cham}^{-1} + \vb{t}^{\lrb{\cv_{k;\cham}}_+} }}^{-1}\\
=& \Xt_{i;\cham}
\lrp{ q_k^{\epsilon_{ik}^2} q_k^{-\epsilon_{ik}^2} \prod_{\ell=1}^{\epsilon_{ik}}\lrp{ \vb{t}^{\lrb{-\cv_{k;\cham}}_+} q_k^{2\ell-1} \Xt_{k;\cham}^{-1} + \vb{t}^{\lrb{\cv_{k;\cham}}_+} }}^{-1}\\
=& \Xt_{i;\cham}
\lrp{\prod_{\ell=1}^{\epsilon_{ik}}\lrp{\vb{t}^{\lrb{\cv_{k;\cham}}_+} + \vb{t}^{\lrb{-\cv_{k;\cham}}_+} q_k^{2\ell-1} \Xt_{k;\cham}^{-1} }}^{-1}.
}
This proves the claim.
\end{proof}

\begin{cor}\thlabel{cor:recover}
 Setting $\vb{t}=\vb{1}$ we recover the coefficient free quantum mutation formula \eqref{eq:muq}. Setting $q=1$, we recover the degeneration formula \eqref{eq:Xfammu}. 
\end{cor}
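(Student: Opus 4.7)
The plan is to verify both claimed specializations by direct substitution into formula \eqref{eq:qdmutation} and matching the result with the target formulas \eqref{eq:muq} and \eqref{eq:Xfammu}. Neither specialization requires any delicate manipulation; the key observation is simply that the product in \eqref{eq:qdmutation} factorizes in the appropriate way in each limit.

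First, I would set $\vb{t} = \vb{1}$. Then $\vb{t}^{[\pm\sgn(\epsilon_{ik})\cv_{k;\cham}]_+}=1$ regardless of sign, so for $i\neq k$ each factor in the product reduces to
\[
1 + q_k^{2\ell-1}\Xt_{k;\cham}^{-\sgn(\epsilon_{ik})},
\]
and comparing with \eqref{eq:muq} (using $q_k = q^{1/d_k}$) the two formulas coincide. The case $i=k$ is identical in both formulas.

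Next, I would set $q=1$, which forces $q_k=1$. For $i\neq k$ every factor inside the product becomes the same expression
\[
\vb{t}^{[\sgn(\epsilon_{ik})\cv_{k;\cham}]_+} + \vb{t}^{[-\sgn(\epsilon_{ik})\cv_{k;\cham}]_+}\Xt_{k;\cham}^{-\sgn(\epsilon_{ik})},
\]
so the product of $|\epsilon_{ik}|$ identical copies raised to the exponent $-\sgn(\epsilon_{ik})$ collapses to a single factor raised to $-|\epsilon_{ik}|\sgn(\epsilon_{ik}) = -\epsilon_{ik}$. This is exactly the $i\neq k$ case of \eqref{eq:Xfammu}, and once again the $i=k$ case matches trivially.

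I do not expect any genuine obstacle: both parts of the corollary amount to checking that \eqref{eq:qdmutation} specializes term-by-term. The only minor subtlety worth writing down explicitly is the collapse of the $\ell$-indexed product into a single factor with exponent $-\epsilon_{ik}$ in the classical limit, since this is where the non-commutative $q$-shift structure visibly disappears. A one-line verification of each limit followed by a pointer to the relevant displayed equation should suffice.
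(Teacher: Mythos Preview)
Your proposal is correct and matches the paper's implicit approach: the paper states this corollary without proof immediately after \thref{prop:qdmutation}, treating it as a direct consequence of inspecting \eqref{eq:qdmutation}, which is precisely the term-by-term substitution you outline.
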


\begin{prop}\thlabel{prop:*hom}
The quantum $\cX$-mutation with coefficients $\mu_{k;\vb{t};\cham}^q$ is a $*$-homomorphism.
\end{prop}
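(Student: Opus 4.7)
The plan is to use the factorization $\mu_{k,\vb{t};\cham}^q = \mu_{k,\vb{t};\cham}^\sharp \circ \mu_{k,\vb{t};\cham}'$ established above and verify that each factor is a $*$-algebra homomorphism; their composition is then automatically a $*$-homomorphism. Throughout, I extend the $*$-structure of \eqref{eq:*map} to $R[q^{\pm 1/d}]$ by declaring $*(t_i) = t_i$, consistent with the fact that the coefficient variables $t_i$ lie in the central base ring.

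For $\mu_{k,\vb{t};\cham}'$ the argument is essentially formal. It sends the $*$-fixed element $\Xt^v_{\cham'}$ to $\vb{t}^{-\lrc{v,e_{k;\cham}}d_k [-\cv_{k;\cham}]_+} \Xt^v_{\cham}$, namely the basis-renaming $\Xt^v_{\cham'}\mapsto \Xt^v_{\cham}$ post-multiplied by a central, $*$-fixed scalar. The skew form $\lrc{\cdot,\cdot}$ on $N$ does not depend on the choice of basis, so the quantum torus relations on both sides match and the basis-renaming is an algebra isomorphism; post-multiplication by a central unit is again an algebra map. Since the prefactor is both central and $*$-fixed, $\mu'$ clearly commutes with $*$.

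The substantive step is to show that $\mu_{k,\vb{t};\cham}^\sharp = \vb{Ad}_{\vb{\Psi}_{q_k}(y)}$, where $y := \tfrac{\vb{t}^{[\cv_{k;\cham}]_+}}{\vb{t}^{[-\cv_{k;\cham}]_+}} \Xt_{k;\cham}$, commutes with $*$. The key observation is that $*(y) = y$: the coefficients $t_i$ are central and $*$-fixed, and commute with the $*$-fixed variable $\Xt_{k;\cham}$. The second key ingredient is the identity $\vb{\Psi}_{q^{-1}}(x) = \vb{\Psi}_q(x)^{-1}$ from \cite[Section~3.2]{FG_cluster_ensembles}, which follows directly from the product definition since the factors $1+q^{2\ell-1}x$ pairwise commute. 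Applying $*$ factor-by-factor to the commuting product $\vb{\Psi}_{q_k}(y) = \prod_{\ell\geq 1}(1+q_k^{2\ell-1}y)^{-1}$ using $*(q_k) = q_k^{-1}$ and $*(y)=y$, I would conclude $*(\vb{\Psi}_{q_k}(y)) = \vb{\Psi}_{q_k^{-1}}(y) = \vb{\Psi}_{q_k}(y)^{-1}$. Since $*$ is an antiautomorphism, this gives
\eqn{
*\lrp{\vb{\Psi}_{q_k}(y)\, x\, \vb{\Psi}_{q_k}(y)^{-1}}
= *\lrp{\vb{\Psi}_{q_k}(y)}^{-1}\, *(x)\, *\lrp{\vb{\Psi}_{q_k}(y)}
= \vb{\Psi}_{q_k}(y)\, *(x)\, \vb{\Psi}_{q_k}(y)^{-1},
}
so $*\circ\mu^\sharp = \mu^\sharp\circ *$, completing the argument.

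The only nontrivial bookkeeping in this plan is tracking the antiautomorphism property of $*$ on products and inverses of $\vb{\Psi}_{q_k}(y)$, which in turn depends on the commutativity of the factors of the dilogarithm product; the presence of the coefficients $t_i$ introduces no new difficulties beyond the coefficient-free case already treated in \cite{FG_cluster_ensembles}, since at every stage they enter through the central, $*$-fixed combination $\vb{t}^{[\cv_{k;\cham}]_+}/\vb{t}^{[-\cv_{k;\cham}]_+}$.
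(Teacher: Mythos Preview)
Your proof is correct and takes essentially the same approach as the paper: decompose $\mu_{k,\vb{t};\cham}^q = \mu_{k,\vb{t};\cham}^\sharp \circ \mu_{k,\vb{t};\cham}'$ and verify each factor separately, invoking $\vb{\Psi}_{q^{-1}}(x) = \vb{\Psi}_q(x)^{-1}$ for the conjugation piece. Your treatment of $\mu'$ via the lattice-level formula $\Xt^v_{\cham'} \mapsto \vb{t}^{\cdots}\Xt^v_{\cham}$ is in fact cleaner than the paper's, which instead performs an explicit case split on $\sgn(\epsilon_{ik})$ at the level of cluster variables to reach the same conclusion.

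One small correction: the identity $\vb{\Psi}_{q^{-1}}(x) = \vb{\Psi}_q(x)^{-1}$ does \emph{not} follow from the product expansion---substituting $q\mapsto q^{-1}$ into $\prod_{\ell\geq 1}(1+q^{2\ell-1}x)^{-1}$ gives $\prod_{\ell\geq 1}(1+q^{-(2\ell-1)}x)^{-1}$, not $\prod_{\ell\geq 1}(1+q^{2\ell-1}x)$. The identity comes instead from the exponential form \eqref{eq:psi} together with $\mathrm{Li}_2(x;q^{-1}) = -\mathrm{Li}_2(x;q)$, which is immediate from \eqref{eq:li}. Your citation of the identity from \cite{FG_cluster_ensembles} is correct; only the parenthetical justification is off, and this does not affect the validity of the argument.
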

\begin{proof}
Since by construction $\mu_{k;\vb{t};\cham}^q=\mu_{k;\vb{t};\cham}^{\sharp}\circ\mu_{k;\vb{t};\cham}'$, it is enough to prove that the homomorphisms $\mu_{k;\vb{t};\cham}^{\sharp}$ and $\mu_{k;\vb{t};\cham}'$ are $*$-homomorphisms. 

Using the fact that the quantum dilogarithm satisfies the equality $\Psi_{q_k^{-1}}\lrp{x}=\Psi_{q_k}(x)^{-1}$ \cite[Equation~53]{FG_cluster_ensembles}, it follows that
\eqn{
*\lrp{\mu_{k;\vb{t};\cham}^{\sharp}\lrp{\Xt_{i;\cham}}}
&= \vb{\Psi}_{q_k^{-1}} \lrp{\frac{\vb{t}^{\lrb{\cv_{k;\cham}}_+}}{\vb{t}^{\lrb{-\cv_{k;\cham}}_+}} \Xt_{k;\cham}}^{-1} \Xt_{i;\cham}\,  \vb{\Psi}_{q_k^{-1}} \lrp{\frac{\vb{t}^{\lrb{\cv_{k;\cham}}_+}}{\vb{t}^{\lrb{-\cv_{k;\cham}}_+}} \Xt_{k;\cham}} \\
&=\vb{\Psi}_{q_k} \lrp{\frac{\vb{t}^{\lrb{\cv_{k;\cham}}_+}}{\vb{t}^{\lrb{-\cv_{k;\cham}}_+}} \Xt_{k;\cham}} \Xt_{i;\cham}\,  \vb{\Psi}_{q_k} \lrp{\frac{\vb{t}^{\lrb{\cv_{k;\cham}}_+}}{\vb{t}^{\lrb{-\cv_{k;\cham}}_+}} \Xt_{k;\cham}}^{-1}
=\mu_{k;\vb{t};\cham}^{\sharp}\lrp{\Xt_{i;\cham}}.
}
For $\mu_{k;\vb{t};\cham}'$, the statement is clear if $i=k$ or $i\neq k$ and $\epsilon_{ik}=0$. Now, assume that $i\neq k$ and $\epsilon_{ik}\neq 0$. We consider two cases:

{\bf Case 1:} $\epsilon_{ik}<0$. 
\eqn{
*\lrp{ \mu_{k;\vb{t};\cham}'\lrp{\Xt_{i;\cham}}}
= *\lrp{\vb{t}^{-\epsilon_{ik} \lrb{-\cv_{k;\cham}}_+}  \Xt_{i;\cham}}
= \vb{t}^{-\epsilon_{ik} \lrb{-\cv_{k;\cham}}_+}  \Xt_{i;\cham}
= \mu_{k;\vb{t};\cham}'\lrp{\Xt_{i;\cham}}.
}

{\bf Case 2:} $\epsilon_{ik}>0.$ Recall that $\Xt_{k;\cham}\Xt_{i;\cham}=q^{-2\widehat{\epsilon}_{ik}}\Xt_{i;\cham}\Xt_{k;\cham}$.
\eqn{
*\lrp{ \mu_{k;\vb{t};\cham}'\lrp{\Xt_{i;\cham}}}
&=*\lrp{\vb{t}^{-\epsilon_{ik} \lrb{-\cv_{k;\cham}}_+} q^{-\widehat{\epsilon}_{ik}\epsilon_{ik}} \Xt_{i;\cham}\Xt_{k;\cham}^{\epsilon_{ik}}} \\
&=\vb{t}^{-\epsilon_{ik} \lrb{-\cv_{k;\cham}}_+} q^{\widehat{\epsilon}_{ik}\epsilon_{ik}} \Xt_{k;\cham}^{\epsilon_{ik}}\Xt_{i;\cham} \\
&=\vb{t}^{-\epsilon_{ik} \lrb{-\cv_{k;\cham}}_+} q^{\widehat{\epsilon}_{ik}\epsilon_{ik}}q^{-2\widehat{\epsilon}_{ik}\epsilon_{ik}} \Xt_{i;\cham}\Xt_{k;\cham}^{\epsilon_{ik}} \\
&=\vb{t}^{-\epsilon_{ik} \lrb{-\cv_{k;\cham}}_+} q^{-\widehat{\epsilon}_{ik}\epsilon_{ik}} \Xt_{i;\cham}\Xt_{k;\cham}^{\epsilon_{ik}}\\
&=\mu_{k;\vb{t};\cham}'\lrp{\Xt_{i;\cham}}.
}
This completes the proof. 
\end{proof}

\begin{definition}\thlabel{def:quantum-family}
The scheme $\Xfsp_{\Sigma,q}$ over $R$ is obtained by gluing the affine patches $$\A^{\lrm{I}}_{M;\cham,q}\lrp{R}:= \Spec\lrp{R[q^{\pm \frac{1}{d}}]\lra{\Xt_{\cham}^n: n \in (\cham^\vee\cap N) } / \sim}$$ via the mutations given in \eqref{eq:qdmutation}, where $\cham$ varies over the maximal cones of $\Sigma$. Similarly, we obtain the scheme $\Xfam_{\Sigma,q}$ by gluing the affine patches $\Spec\lrp{R[q^{\pm \frac{1}{d}}]\lra{\Xt_{\cham}^{n}: n \in N } / \sim}$. 
When there is no risk of confusion, we will often drop the subscript $\Sigma$ from our notation, writing simply $\Xfsp_q$ or $\Xfam_q$. 
\end{definition}

\begin{remark}
In \cite{BFMNC}, the family $\Xfsp_{\Gamma,[\seed]}$ is viewed as a toric degeneration of the special completion $\Xsp_{\Gamma,[\seed]}$ of $\cX_{\Gamma,[\seed]}$ to the toric variety defined by the underlying fan of the cluster complex in the $\cA_{\Gamma^\vee,[\seed^\vee]}$ scattering diagram.
Note that here we obtain a quantum version of this toric degeneration, where characters only $q$-commute.
\end{remark}

Before proceeding, let us describe the gluing in \thref{def:quantum-family} in greater detail.
We will focus on $\Xfsp_{q}$; the case of $\Xfam_{q}$ involves only minor changes.
This discussion closely follows the analogous classical discussion in \cite{BFMNC}. 
Let $\cham$ and $\cham'$ be adjacent maximal cones of $\Sigma$, sharing a facet contained in $\cv_{k;\cham}^\perp = \cv_{k;\cham'}^\perp$. 
Define 
\eq{A_{\cham,q} := R[q^{\pm \frac{1}{d}}]\lra{\Xt_{k;\cham}^{\pm}, \Xt_{i\neq k;\cham}, \lrp{\displaystyle\prod_{\ell=1}^{|\epsilon_{ik}|} \lrp{\vb{t}^{[\sgn{\lrp{\epsilon_{ik}}}\vb{c}_{k;\cham}]_+}+\vb{t}^{[-\sgn{\lrp{\epsilon_{ik}}}\vb{c}_{k;\cham}]_+} q_k^{2\ell-1} \Xt_{k;\cham}^{-\sgn{\lrp{\epsilon_{ik}}}}}}^{-1}} / \sim  }{eq:glue-ring} 
and define $A_{\cham',q}$ analogously.
Observe that $\sgn(\epsilon_{ik}) \cv_{k;\cham} = \sgn(\epsilon_{ik}') \cv_{k;\cham'}$ and $\mu_{k,\vb{t};\cham}^q(\Xt_{k;\cham'}^{-\sgn(\epsilon_{ik}')}) = \Xt_{k;\cham}^{-\sgn(\epsilon_{ik})}$.
Armed with these identities, we apply $\mu_{k,\vb{t};\cham}^q$ to the generators of $A_{\cham',q}$ and find: 
\eq{\mu_{k,\vb{t};\cham}^q(\Xt_{k;\cham'}^{\pm}) = \Xt_{k;\cham}^{\mp},}{eq:muqXk}
\eq{\mu_{k,\vb{t};\cham}^q(\Xt_{i;\cham'}) =\Xt_{i;\cham} \lrp{\displaystyle\prod_{\ell=1}^{|\epsilon_{ik}|} \lrp{\vb{t}^{[\sgn{\lrp{\epsilon_{ik}}}\vb{c}_{k;\cham}]_+}+\vb{t}^{[-\sgn{\lrp{\epsilon_{ik}}}\vb{c}_{k;\cham}]_+} q_k^{2\ell-1} \Xt_{k;\cham}^{-\sgn{\lrp{\epsilon_{ik}}}}}}^{-\sgn{\lrp{\epsilon_{ik}}}} }{eq:muqXi}
for $i\neq k$, 
and
\eq{\mu_{k,\vb{t};\cham}^q\lrp{\displaystyle\prod_{\ell=1}^{|\epsilon_{ik}'|} \lrp{\vb{t}^{[\sgn{\lrp{\epsilon_{ik}'}}\vb{c}_{k;\cham'}]_+}+\vb{t}^{[-\sgn{\lrp{\epsilon_{ik}'}}\vb{c}_{k;\cham'}]_+} q_k^{2\ell-1} \Xt_{k;\cham'}^{-\sgn{\lrp{\epsilon_{ik}'}}}}}^{-1} \\
= \lrp{\displaystyle\prod_{\ell=1}^{|\epsilon_{ik}|} \lrp{\vb{t}^{[\sgn{\lrp{\epsilon_{ik}}}\vb{c}_{k;\cham}]_+}+\vb{t}^{[-\sgn{\lrp{\epsilon_{ik}}}\vb{c}_{k;\cham}]_+} q_k^{2\ell-1} \Xt_{k;\cham}^{-\sgn{\lrp{\epsilon_{ik}}}}}}^{-1}.}{eq:muqstuff}
The right hand sides of \eqref{eq:muqXk} and \eqref{eq:muqstuff} are explicitly within the generating set defining $A_{\cham,q}$.
Meanwhile, if $\epsilon_{ik}<0$ we may generate $\Xt_{i\neq k;\cham}$ using the right hand sides of \eqref{eq:muqXi} and \eqref{eq:muqstuff}.
On the other hand, if $\epsilon_{ik}>0$ the term within the outer parentheses on the right hand side of \eqref{eq:muqXi} is an element of $R[q^{\pm \frac{1}{d}}, \Xt_{k,\cham}^{\pm}]$, and we may generate $\Xt_{i\neq k;\cham}$ using the right hand sides of \eqref{eq:muqXi} and \eqref{eq:muqXk}.
Moreover, we see that applying $\mu_{k,\vb{t};\cham}^q$ to each generator of $A_{\cham',q}$ yields an element of $A_{\cham,q}$.
It follows that $\mu_{k,\vb{t};\cham}^q:  \sK\lrp{\A^{\lrm{I}}_{M;\cham',q}\lrp{R}} \to \sK\lrp{\A^{\lrm{I}}_{M;\cham,q}\lrp{R}}  $
restricts to a $*$-algebra isomorphism $\mu_{k,\vb{t};\cham}^q:  A_{\cham',q} \to A_{\cham,q} $.
The affine patches $\A^{\lrm{I}}_{M;\cham,q}\lrp{R}$ and $ \A^{\lrm{I}}_{M;\cham',q}\lrp{R}$ are glued via the induced isomorphism of their open subvarieties $\Spec\lrp{A_{\cham,q}} \stackrel{\sim}{\to} \Spec\lrp{A_{\cham',q}}$.

One final detail remains in order to verify that the gluing of \thref{def:quantum-family} makes sense.
We have described how adjacent affine patches glue, and to assure that all affine patches can be coherently glued as described we must verify that this gluing is independent of mutation sequence used as discussed in \thref{rem:cham}. To do so and following the argument in \cite{BFMNC}, we will prove that the periodicities in $\Xsp_{q}$ coincide with the ones in $\Xfsp_{q}$.
The independence of mutation sequence is known to hold in the former case.

Let $F$ be an element in $\mathbb{C}[q^{\pm \frac{1}{d}}]\lra{X_{\cham}^n: n \in N } / \sim$. In a natural way, we can consider the coefficients of $F$ as elements in $R[q^{\pm \frac{1}{d}}]$ and then, we have an induced element $F^R$ in $R[q^{\pm \frac{1}{d}}]\lra{X_{\cham}^n: n \in N } / \sim$. Using this extension, we are able to recover the quantum $\cX$-mutation with coefficients \eqref{eq:qdmutation}.

\begin{prop}
\thlabel{prop:q-periods}
With notation as above, the following equality holds
\eq{ \mu_{\vb{t};(\seed_0, \seed)}^q \lrp{\Xt_{i;\seed} }= 
\frac{\lrp{\mu_{(\seed_0, \seed)}^q \lrp{X_{i;\seed}}}^R \lrp{ t_1 X_{1;\seed_0}^R,\dots, t_{\lrm{I}} X_{\lrm{I};\seed_0}^R } }{\vb{t}^{\cv_{i;\seed}}}, }{eq:periodicity} 
where $\mu_{\vb{t};(\seed_0, \seed)}^q$ denotes the pullback from $\A^{\lrm{I}}_{M;\seed,q}\lrp{R}$ to $\A^{\lrm{I}}_{M;\seed_0,q}\lrp{R}$  along a given sequence of mutations and similarly $\mu_{(\seed_0, \seed)}^q $ denotes the corresponding pullback from $\A^{\lrm{I}}_{M;\seed,q}\lrp{\C}$ to $\A^{\lrm{I}}_{M;\seed_0,q}\lrp{\C}$.
As a consequence, we obtain the same periodicities for the cluster variables of $\Xfsp_{q}$ and $\Xsp_{q}$. That is,
\begin{center}
$\mu_{\vb{t};(\seed_0, \seed)}^q \lrp{\Xt_{i;\seed}}=\mu_{\vb{t};(\seed_0, \seed')}^q \lrp{\Xt_{j;\seed'}}$ if and only if $\mu_{(\seed_0, \seed)}^q \lrp{X_{i;\seed}}=\mu_{(\seed_0, \seed')}^q \lrp{X_{{j;\seed'}}}$. 
\end{center}
\end{prop}

\begin{proof}
First, if $\seed=\seed_0$, $\mu_{\vb{t};(\seed_0, \seed)}^q \lrp{\Xt_{i;\seed} } = \Xt_{i;\seed_0} = X_{i;\seed_0}^R$ and we have 
\eqn{
\frac{\lrp{\mu_{(\seed_0, \seed)}^q \lrp{X_{i;\seed}}}^R \lrp{ t_1 X_{1;\seed_0}^R,\dots, t_{\lrm{I}} X_{\lrm{I};\seed_0}^R } }{\vb{t}^{\cv_{i;\seed}}} =
\frac{t_i X_{i;\seed_0}^R }{t_i} = X_{i;\seed_0}^R .}
So the result holds for all variables of the initial cluster.
Assume for induction that it holds for all variables of $\seed$, and let $\seed'$ be a seed adjacent to $\seed$.
Let $\seed$ and $\seed'$ be related by mutation in direction $k$.
For $i=k$, we have \eq{\mu_{\vb{t};(\seed_0,\seed')}^q \lrp{\Xt_{k;\seed'}}  =\mu_{\vb{t};(\seed_0,\seed)}^q \circ \mu_{k,\vb{t};\seed}^q \lrp{\Xt_{k;\seed'}} = \mu_{\vb{t};(\seed_0,\seed)}^q\lrp{\Xt_{k;\seed}^{-1}}= \mu_{\vb{t};(\seed_0,\seed)}^q\lrp{\Xt_{k;\seed}}^{-1}}{eq:k-period-1} 
and similarly by \eqref{eq:muq}
\eq{
\frac{\lrp{\mu_{(\seed_0, \seed')}^q \lrp{X_{k;\seed'}}}^R \lrp{ t_1 X_{1;\seed_0}^R,\dots, t_{\lrm{I}} X_{\lrm{I};\seed_0}^R } }{\vb{t}^{\cv_{k;\seed'}}} &=
\frac{\lrp{\mu_{(\seed_0, \seed)}^q \circ\mu_{k;\seed}^q \lrp{X_{k;\seed'}}}^R \lrp{ t_1 X_{1;\seed_0}^R,\dots, t_{\lrm{I}} X_{\lrm{I};\seed_0}^R } }{\vb{t}^{\cv_{k;\seed'}}}\\ 
&=
\frac{\lrp{\mu_{(\seed_0, \seed)}^q  \lrp{X_{k;\seed}^{-1}} }^R \lrp{ t_1 X_{1;\seed_0}^R,\dots, t_{\lrm{I}} X_{\lrm{I};\seed_0}^R } }{\vb{t}^{-\cv_{k;\seed}}}\\
&= 
\lrp{\frac{\lrp{\mu_{(\seed_0, \seed)}^q  \lrp{X_{k;\seed}} }^R \lrp{ t_1 X_{1;\seed_0}^R,\dots, t_{\lrm{I}} X_{\lrm{I};\seed_0}^R }} {\vb{t}^{\cv_{k;\seed}}}}^{-1}
}{eq:k-period-2}
By the induction hypothesis, \eqref{eq:k-period-1} and \eqref{eq:k-period-2} agree, as desired.
Next, we use \eqref{eq:qdmutation} with $i\neq k$ to expand
$\mu_{\vb{t};(\seed_0,\seed')}^q \lrp{\Xt_{i;\seed'}}  =\mu_{\vb{t};(\seed_0,\seed)}^q \circ \mu_{k,\vb{t};\seed}^q \lrp{\Xt_{i;\seed'}} $:
\eq{ &\mu_{\vb{t};(\seed_0,\seed)}^q\lrp{\Xt_{i;\seed} \lrp{\displaystyle\prod_{\ell=1}^{|\epsilon_{ik}|} \lrp{\vb{t}^{[\sgn{\lrp{\epsilon_{ik}}}\vb{c}_{k;\seed}]_+}+\vb{t}^{[-\sgn{\lrp{\epsilon_{ik}}}\vb{c}_{k;\seed}]_+} q_k^{2\ell-1} \Xt_{k;\seed}^{-\sgn{\lrp{\epsilon_{ik}}}}}}^{-\sgn{\lrp{\epsilon_{ik}}}}}\\
=& 
\mu_{\vb{t};(\seed_0,\seed)}^q\lrp{ \vb{t}^{-\epsilon_{ik} [\sgn{\lrp{\epsilon_{ik}}}\vb{c}_{k;\seed}]_+}  \Xt_{i;\seed} \lrp{\displaystyle\prod_{\ell=1}^{|\epsilon_{ik}|} \lrp{1+\vb{t}^{-\sgn{\lrp{\epsilon_{ik}}}\vb{c}_{k;\seed}} q_k^{2\ell-1} \Xt_{k;\seed}^{-\sgn{\lrp{\epsilon_{ik}}}}}}^{-\sgn{\lrp{\epsilon_{ik}}}}}\\
=& 
\mu_{\vb{t};(\seed_0,\seed)}^q\lrp{ \vb{t}^{\cv_{i;\seed}-\cv_{i;\seed'}}  \Xt_{i;\seed} \lrp{\displaystyle\prod_{\ell=1}^{|\epsilon_{ik}|} \lrp{1+ q_k^{2\ell-1} \lrp{\vb{t}^{\vb{c}_{k;\seed}} \Xt_{k;\seed}}^{-\sgn{\lrp{\epsilon_{ik}}}}}}^{-\sgn{\lrp{\epsilon_{ik}}}}}\\
=& 
\vb{t}^{\cv_{i;\seed}-\cv_{i;\seed'}} \mu_{\vb{t};(\seed_0,\seed)}^q\lrp{  \Xt_{i;\seed} }
\lrp{\displaystyle\prod_{\ell=1}^{|\epsilon_{ik}|} \lrp{1+ q_k^{2\ell-1} \lrp{\vb{t}^{\vb{c}_{k;\seed}}  \mu_{\vb{t};(\seed_0,\seed)}^q\lrp{\Xt_{k;\seed}}}^{-\sgn{\lrp{\epsilon_{ik}}}}}}^{-\sgn{\lrp{\epsilon_{ik}}}}
}{eq:i-period-1} 
By the induction hypothesis, we can rewrite \eqref{eq:i-period-1} as 
\eqn{&\vb{t}^{-\cv_{i;\seed'}}\lrp{\mu_{(\seed_0,\seed)}^q\lrp{  X_{i;\seed} }}^R\lrp{t_1 X_{1;\seed_0}^R, \dots, t_{\lrm{I}} X_{\lrm{I};\seed_0}^R}\\
\times & \lrp{\displaystyle\prod_{\ell=1}^{|\epsilon_{ik}|} \lrp{1+ q_k^{2\ell-1} \lrp{\mu_{(\seed_0,\seed)}^q\lrp{X_{k;\seed}}}^R\lrp{t_1 X_{1;\seed_0}^R, \dots, t_{\lrm{I}} X_{\lrm{I};\seed_0}^R}^{-\sgn{\lrp{\epsilon_{ik}}}}}}^{-\sgn{\lrp{\epsilon_{ik}}}} 
}
which is precisely
\eqn{\frac{\lrp{\mu_{(\seed_0,\seed)}^q\circ \mu_{k;\seed}^q \lrp{  X_{i;\seed'} }}^R\lrp{t_1 X_{1;\seed_0}^R, \dots, t_{\lrm{I}} X_{\lrm{I};\seed_0}^R}}{\vb{t}^{\cv_{i;\seed'}}} = \frac{\lrp{\mu_{(\seed_0,\seed')}^q\lrp{  X_{i;\seed'} }}^R\lrp{t_1 X_{1;\seed_0}^R, \dots, t_{\lrm{I}} X_{\lrm{I};\seed_0}^R}}{\vb{t}^{\cv_{i;\seed'}}}
}
as claimed.

Next, we prove that the periodicities coincide. If $\mu_{\vb{t};(\seed_0, \seed)}^q \lrp{\Xt_{i;\seed}}=\mu_{\vb{t};(\seed_0, \seed')}^q \lrp{\Xt_{j;\seed'}}$, then taking 
the limit $\vb{t} \rightarrow 1$
we obtain that $\mu_{(\seed_0, \seed)}^q \lrp{X_{i;\seed}}=\mu_{(\seed_0, \seed')}^q \lrp{X_{j;\seed'}}$. Now, assume that $\mu_{(\seed_0, \seed)}^q \lrp{X_{i;\seed}}=\mu_{(\seed_0, \seed')}^q \lrp{X_{j;\seed'}}$. By \cite[Proposition~3.4]{KN11} we have that the periodicities in $\cX_q$ coincide with the ones in $\cX$ and  \cite[Theorem~2.34]{BFMNC} implies that $\cX$ shares the same periodicities with $\Xfam$. In particular, from the proof of \cite[Theorem~2.34]{BFMNC} we have that $\vb{t}^{\cv_{i;\seed}}=\vb{t}^{\cv_{j;\seed'}}$. 
Using \eqref{eq:periodicity}, the equalities $\mu_{(\seed_0, \seed)}^q \lrp{X_{i;\seed}}=\mu_{(\seed_0, \seed')}^q \lrp{X_{j;\seed'}}$ and $\vb{t}^{\cv_{i;\seed}}=\vb{t}^{\cv_{j;\seed'}}$ imply that $\mu_{\vb{t};(\seed_0, \seed)}^q \lrp{\Xt_{i;\seed}}=\mu_{\vb{t};(\seed_0, \seed')}^q \lrp{\Xt_{j;\seed'}}$.
\end{proof}

\begin{cor}\thlabel{cor:cocycle}
The $\cX$-clusters for $\Xfam_{q}$ correspond to maximal cones $\cham$ of $\Sigma$, with each variable indexed by a generator of the dual cone.
The pullback of each cluster variable to the initial affine patch is independent of the chosen mutation sequence.
That is, $\mu_{\vb{t};(\seed_0, \seed)}^q \lrp{\Xt_{i;\seed}}=\mu_{\vb{t};(\seed_0, \seed')}^q \lrp{\Xt_{j;\seed'}}$ for $\Xfam_{q}$ if and only if $\seed$ and $\seed'$ are seeds corresponding to the same maximal cone $\cham$ of $\Sigma$ and $\cv_{i;\seed} = \cv_{j;\seed'} $.
\end{cor}

\begin{proof}
We have just shown in \thref{prop:q-periods} that $\mu_{\vb{t};(\seed_0, \seed)}^q \lrp{\Xt_{i;\seed}}=\mu_{\vb{t};(\seed_0, \seed')}^q \lrp{\Xt_{j;\seed'}}$ for $\Xfam_{q}$ if and only if $\mu_{(\seed_0, \seed)}^q \lrp{X_{i;\seed}}=\mu_{(\seed_0, \seed')}^q \lrp{X_{{j;\seed'}}}$ for $\cX_{q}$. 
But by \cite[Proposition~3.4]{KN11}, $\mu_{(\seed_0, \seed)}^q \lrp{X_{i;\seed}}=\mu_{(\seed_0, \seed')}^q \lrp{X_{{j;\seed'}}}$ for $\cX_{q}$ if and only if the classical version $\mu_{(\seed_0, \seed)}^* \lrp{X_{i;\seed}}=\mu_{(\seed_0, \seed')}^* \lrp{X_{{j;\seed'}}}$ holds for $\cX$.
By \cite[Theorem 5.9]{Nak21}, $\mu_{(\seed_0, \seed)}^* \lrp{X_{i;\seed}}=\mu_{(\seed_0, \seed')}^* \lrp{X_{{j;\seed'}}}$ for $\cX$ if and only if $\seed$ and $\seed'$ are seeds corresponding to the same maximal cone $\cham$ of $\Sigma$ and $\cv_{i;\seed} = \cv_{j;\seed'} $. So, $\mu_{\vb{t};(\seed_0, \seed)}^q \lrp{\Xt_{i;\seed}}=\mu_{\vb{t};(\seed_0, \seed')}^q \lrp{\Xt_{j;\seed'}}$ for $\Xfam_{q}$ if and only if $\seed$ and $\seed'$ are seeds corresponding to the same maximal cone $\cham$ of $\Sigma$ and $\cv_{i;\seed} = \cv_{j;\seed'} $.
\end{proof}

As alluded to in \thref{rem:cham}, we are now justified in indexing $\cX$-clusters by maximal cones $\cham$ of $\Sigma$ rather than by seeds $\seed$ mutation equivalent to $\seed_0$.

\begin{example} In this example, we compare the different cluster varieties associated to the $A_2$ quiver. We illustrated the scattering diagram for this example in Figure~\ref{fig:A2Scat}.  This may be compared to \cite[Table~2]{BFMNC} and \cite[Tables~1-4]{FZ_clustersIV}, where to make the comparison set $\epsilon_\seed=B_\seed^T$. 
In Table~\ref{tab:a2_type}, we compare the usual $\cX$-variety with the corresponding quantum variety $\cX_q$ obtained using the quantum mutation formula in \cite{FG_cluster_ensembles} and in Table~\ref{tab:a2_deg_quant}, we compare the family $\Xfsp$ with the quantum family $\Xfsp_q$ using the mutation formula \eqref{eq:qdmutation}. Here, we index with seeds in order to have a direct comparison with \cite[Table~2]{BFMNC} and \cite[Tables~1-4]{FZ_clustersIV}.  \\[1ex]

\captionsetup{type=table}
\renewcommand{\arraystretch}{1.5}
\begin{minipage}{\textwidth}
\begin{center}
\begin{tabularx}{.97\linewidth}{|c|c|c|c|c|c|}
\cline{1-6}
& &  \multicolumn{2}{|c|}{$\cX$} & \multicolumn{2}{|c|}{$\cX_q$}  \\
\cline{1-6}
$\seed$ & $\epsilon_\seed$ &  $X_{1;\seed}$ & $X_{2;\seed}$ & $X_{1;\seed}$ & $X_{2;\seed}$  \\
\cline{1-6}
    0 & 
    $\left(\begin{smallmatrix} 0 &-1 \\ 1 &0 \end{smallmatrix}\right)$ &
    $X_1$ &
    $X_2$ &
    $X_1$ &
    $X_2$ \\
\cline{1-6}
$\updownarrow \mu_2$ \\
\cline{1-6}
    1 & 
    $\left(\begin{smallmatrix} 0 &1 \\ -1 &0 \end{smallmatrix}\right)$ &
    $X_1\lrp{1+X_2}$&
    $\frac{1}{X_2}$&
    $X_1\lrp{1+qX_2}$&
    $X_2^{-1}$\\
\cline{1-6} 
$\updownarrow \mu_1$ \\
\cline{1-6}
    2 & 
    $\left(\begin{smallmatrix} 0 &-1 \\ 1 &0 \end{smallmatrix}\right)$ &
    $\frac{1}{X_1\lrp{1+X_2}}$&
    $\frac{X_1X_2+X_1+1}{X_2}$&
    $\lrp{1+qX_2}^{-1}X_1^{-1}$&
    $\lrp{X_1^{-1}X_2}^{-1}\lrp{X_1^{-1}+q\lrp{1+qX_2}}$   \\
    \cline{1-6} 
$\updownarrow \mu_2$ \\
\cline{1-6}
    3 & 
    $\left(\begin{smallmatrix} 0 &1 \\ -1 &0 \end{smallmatrix}\right)$ &
    $\frac{X_1+1}{X_1X_2}$&
    $\frac{X_2}{X_1X_2+X_1+1}$&
    $X_2^{-1}\lrp{1+q^{-1}X_1^{-1}}$&
    $\lrp{X_1^{-1}+q\lrp{1+qX_2}}^{-1}\lrp{X_1^{-1}X_2}$\\
\cline{1-6} 
$\updownarrow \mu_1$ \\
\cline{1-6}
    4 & 
    $\left(\begin{smallmatrix} 0 &-1 \\ 1 &0 \end{smallmatrix}\right)$ &
    $\frac{X_1X_2}{X_1+1}$&
    $\frac{1}{X_1}$&
    $\lrp{1+q^{-1}X_1^{-1}}^{-1}X_2$&
    $X_1^{-1}$\\    
\cline{1-6} 
$\updownarrow \mu_2$ \\
\cline{1-6}
    5 & 
    $\left(\begin{smallmatrix} 0 &1 \\ -1 &0 \end{smallmatrix}\right)$ &
    $X_2$&
    $X_1$&
    $X_2$&
    $X_1$\\    
\cline{1-6}    
\end{tabularx}
\end{center}
\end{minipage}
\captionof{table}{\label{tab:a2_type} Comparison between the usual cluster variety $\cX$ and the quantum cluster variety $\cX_q$ in type $A_2$.}

\newpage
\begin{landscape}
\captionsetup{type=table}
\renewcommand{\arraystretch}{1.5}
\begin{minipage}[c]{\textwidth}
\begin{center}
\begin{tabularx}{.8\linewidth}{|c|c|c|c|c|c|c|}
\cline{1-7}
& & & \multicolumn{2}{|c|}{$\Xfsp$} & \multicolumn{2}{|c|}{$\Xfsp_q$} \\
\cline{1-7}
$\seed$ & $\epsilon_\seed$ & $\mathbf{C}_\seed$ & $\Xt_{1;\seed}$ & $\Xt_{2;\seed}$ & $\Xt_{1;\seed}$ & $\Xt_{2;\seed}$  \\
\cline{1-7}
    0 & 
    $\left(\begin{smallmatrix} 0 &-1 \\ 1 &0 \end{smallmatrix}\right)$ &
    $\left(\begin{smallmatrix} 1 &0 \\ 0 &1 \end{smallmatrix}\right)$ &
    $X_1$ &
    $X_2$ &
    $X_1$ &
    $X_2$ \\
\cline{1-7}
$\updownarrow \mu_2$ \\
\cline{1-7}
    1 & 
    $\left(\begin{smallmatrix} 0 &1 \\ -1 &0 \end{smallmatrix}\right)$ &
    $\left(\begin{smallmatrix} 1 &0 \\ 0 &-1 \end{smallmatrix}\right)$ &
    $X_1\lrp{1+t_2X_2}$&
    $\frac{1}{X_2}$&
    $X_1\lrp{1+t_2qX_2}$& 
    $X_2^{-1}$\\
\cline{1-7} 
$\updownarrow \mu_1$ \\
\cline{1-7}
    2 & 
    $\left(\begin{smallmatrix} 0 &-1 \\ 1 &0 \end{smallmatrix}\right)$ &
    $\left(\begin{smallmatrix} -1 &0 \\ 0 &-1 \end{smallmatrix}\right)$ & 
    $\frac{1}{X_1\lrp{1+t_2X_2}}$&
    $\frac{t_1t_2X_1X_2+t_1X_1+1}{X_2}$&
    $\lrp{1+t_2qX_2}^{-1}X_1^{-1}$&
    $\lrp{X_1^{-1}X_2}^{-1}\lrp{X_1^{-1}+t_1q\lrp{1+t_2qX_2}}$    \\
    \cline{1-7} 
$\updownarrow \mu_2$ \\
\cline{1-7}
    3 & 
    $\left(\begin{smallmatrix} 0 &1 \\ -1 &0 \end{smallmatrix}\right)$ &
    $\left(\begin{smallmatrix} -1 &0 \\ -1 &-1 \end{smallmatrix}\right)$& 
    $\frac{t_1X_1+1}{X_1X_2}$&
    $\frac{X_2}{t_1t_2X_1X_2+t_1X_1+1}$&
    $X_2^{-1}\lrp{t_1+q^{-1}X_1^{-1}}$&
    $(X_1^{-1}+t_1q(1+t_2qX_2))^{-1}(X_1^{-1}X_2)$ \\
\cline{1-7} 
$\updownarrow \mu_1$ \\
\cline{1-7}
    4 & 
    $\left(\begin{smallmatrix} 0 &-1 \\ 1 &0 \end{smallmatrix}\right)$ &
    $\left(\begin{smallmatrix} 1 &-1 \\ 1 &0 \end{smallmatrix}\right)$ &
    $\frac{X_1X_2}{t_1X_1+1}$&
    $\frac{1}{X_1}$&
    $\lrp{t_1+q^{-1}X_1^{-1}}^{-1}X_2$&
    $X_1^{-1}$\\    
\cline{1-7} 
$\updownarrow \mu_2$ \\
\cline{1-7}
    5 & 
    $\left(\begin{smallmatrix} 0 &1 \\ -1 &0 \end{smallmatrix}\right)$ &
    $\left(\begin{smallmatrix} 0 &1 \\ 1 &0 \end{smallmatrix}\right)$ &
    $X_2$&
    $X_1$&
    $X_2$&
    $X_1$\\    
\cline{1-7}    
\end{tabularx}
\end{center}
\end{minipage}
\captionof{table}{\label{tab:a2_deg_quant}Comparison between the family $\Xfsp$ and the quantum family $\Xfsp_q$ in type $A_2$. \\Here  $\mathbf{C}_\seed$ is the matrix of $\cv$-vectors for the cluster.}
\end{landscape}

\end{example}
\newpage


\section{Relation to Berenstein-Zelevinsky quantization of \texorpdfstring{$\cAp$}{Aprin}}\label{sec:q-pstar} \label{sec:relate}

In this section we relate quantum $\cX$-mutation with coefficients (Equation \eqref{eq:qdmutation}) with Berenstein-Zelevinsky's quantum $\cA$-mutation and Fomin-Zelevinsky's $\cA$-mutation with coefficients.
We are particularly interested in the case of principal coefficients, so this will be our focus.\footnote{We will however allow arbitrary frozen variables.  We discuss the distinction we make between coefficients and frozen variables later in this section.}

The first task is simply to translate between the Berenstein-Zelevinsky framework and the Fock-Goncharov framework (employed in this paper).
Let us briefly recall some notions from \cite{BZquantum} which we reviewed in greater detail in Section \ref{sec:quantumA}. 
Let $\lrp{ \Lambda, \Bt}$ be a compatible pair, i.e. 
$
\Bt^T \Lambda = 
\begin{pmatrix}
    D' & 0
\end{pmatrix},
$
where
$D'$ is the skew-symmetrizing matrix.
Then $D'B$ is skew-symmetric.
Note that all matrices in this setup are integral.

First note that $\epsilon_{\ubI} = \Bt^T$ when comparing \cite{FG_cluster_ensembles} to \cite{BZquantum}.
Rather than multiplying $\epsilon$ by a diagonal matrix to obtain an integral skew symmetric matrix, Fock and Goncharov multiply a rational skew symmetric matrix $\epshat$ by a diagonal integral matrix $D$  to obtain $\epsilon$:
\eqn{
\epsilon = \epshat\, D.
}
Then
\eqn{
\epsilon_\ubu = \epshat_\ubu D_\ubu.
}
We have that
\eqn{
\lrp{D'B}^T 
=& \epsilon_\ubu D'\\
=& \epshat_\ubu D_\ubu D' 
}
is skew symmetric,
and we can relate the two setups by taking $D'$ to be $\alpha D^{-1}_\ubu$ for some $\alpha \in \Z$ with $\alpha\, \epshat_\ubu$ integral. 
Since $\epsilon$ is integral and $\epsilon = \epshat\, D$, it follows that $d\, \epshat$ is integral as well, where $d = \lcm\lrp{d_i: i \in I}$.
We can take $D'= d\, D_\ubu^{-1}$-- i.e. we can take $D'$ to be the unfrozen part of the Langlands dual of $D$ to give a dictionary between the Berenstein-Zelevinsky and Fock-Goncharov prescriptions.\footnote{Note that this particular appearance of Langlands duality is simply to translate between differing conventions. 
The $\cA$- and $\cX$-varieties discussed in this section are associated to {\emph{the same}} fixed data $\Gamma$, rather than Langlands dual fixed data $\Gamma$ and $\Gamma^\vee$. It is important to keep this in mind in \thref{prop:quantum-pstar}.}
To be clear, $D'$ does not need to be given in this way to have a compatible pair and corresponding Berenstein-Zelevinsky quantization of the $\cA$ variety.
However, choosing a compatible pair with $D'= d\, D_\ubu^{-1}$ will give a Berenstein-Zelevinsky quantization of the $\cA$ variety that is closely related to the Fock-Goncharov quantization of the $\cX$ variety, as illustrated in \thref{prop:quantum-pstar}.

Before relating quantum $\cX$-mutation with coefficients to quantum $\cA$-mutation with coefficients, we need to clarify a potential point of confusion.
As in \cite{BFMNC}:

\begin{center}
{\emph{We distinguish between frozen variables and coefficients.}}    
\end{center}

Geometrically, a coefficient is a parameter on the base of a family while a frozen variable is a coordinate on the fibers of the family.
Coefficients are in the base ring while frozen variables are in an algebra over the base ring.
Classically, $\cA$-mutation treats frozen variables and coefficients in exactly the same way, so these concepts are generally identified in the literature.
However, $\cX$-mutation is genuinely affected the distinction, and in turn so is the notion of a dual cluster variety.
For example, if we have a cluster variety $\cA$ of dimension $n$ over $\C$,
then $\cAp$ can be viewed as either a $2n$-dimensional $\cA$-cluster variety over $\C$ or an
$n$-dimensional $\cA$-cluster variety over $\C\lrb{t_1,\dots,t_n}$.
In the former case, the dual is a $2n$-dimensional $\cX$-cluster variety over $\C$  ($\cX_{\mathrm{prin}}$ in \cite{GHKK}), while in the latter case the dual is an $n$-dimensional $\cX$-cluster variety over $\C\lrb{t_1,\dots,t_n}$ ($\Xfam$ in \cite{BFMNC}).
The mutation formulas for $\cX_{\mathrm{prin}}$ and $\Xfam$ differ.

In the quantum setting, the distinction can also affect $\cA$-mutation.
The reason deals with the relation between lattices and cluster variables.
Classically, it is convenient to describe mutation with coefficients by extending the lattice and skew form to incorporate the coefficients.
On the other hand, we could just as well keep the original lattice and skew form when adding coefficients and still obtain the same mutation formula without referencing any extended lattice, which is essentially how mutation with coefficients in a semifield was originally framed in \cite{FZ_clustersI}, \cite{FZ_clustersIV}.
By contrast, in the quantum setting existence of a compatible pair implies that $\epsilon_\ubI$ is full-rank. 
If the submatrix $\epsilon_\ubu$ is {\emph{not}} full-rank, then there will be some frozen $\cA$-variable that does not commute with a mutable $\cA$-variable-- $\Lambda(f_i,f_j) \neq 0$ for some mutable index $i$ and frozen index $j$.
So, if we extend the lattice to incorporate coefficients {\emph{as frozen variables}}, we can obtain coefficients that do not commute with variables. 
If, on the other hand, we keep the original lattice and distinguish between frozen variables and coefficients, by construction the coefficients will always commute with cluster variables while frozen variables may not.
The rank of the lattice here is the number of variables (both mutable and frozen) in each cluster.

With this in mind, we modify the quantum mutation formula of \cite{BZquantum} to include coefficients in a semifield $\PP$ as in \cite{FZ_clustersI}, \cite{FZ_clustersIV}, without altering the lattice and skew form of the coefficient-free case.
As usual, if $i\neq k$, $\mu_{k,\vb{t};\cham}^q \lrp{\At_{i;\cham'}} = \At_{i;\cham}$.
For $i=k$, we combine \cite[Proposition~4.9]{BZquantum} and  \cite[Equation~2.8]{FZ_clustersIV} to set
\eq{
\mu_{k,\vb{p};\cham}^q \lrp{\At_{i;\cham'}} = p_k^+ \At^{-f_{k;\cham} + \sum_{j: \epsilon_{kj}>0 } \epsilon_{kj} f_{j;\cham} } + p_k^- \At^{-f_{k;\cham} - \sum_{j: \epsilon_{kj}<0 } \epsilon_{kj} f_{j;\cham} }. 
}{eq:Aqmutsemifield}
In the special case of principal coefficients at $\cham$, $p_k^+ = \vb{t}^{\lrb{\cv_{k;\cham}}_+}$, $p_k^- = \vb{t}^{\lrb{-\cv_{k;\cham}}_+}$, and \eqref{eq:Aqmutsemifield} becomes
\eq{
\mu_{k,\vb{t};\cham}^q \lrp{\At_{i;\cham'}} = \vb{t}^{\lrb{\cv_{k;\cham}}_+} \At^{-f_{k;\cham} + \sum_{j: \epsilon_{kj}>0 } \epsilon_{kj} f_{j;\cham} } + \vb{t}^{\lrb{-\cv_{k;\cham}}_+} \At^{-f_{k;\cham} - \sum_{j: \epsilon_{kj}<0 } \epsilon_{kj} f_{j;\cham} }. 
}{eq:Aqmutprin}
We denote the corresponding Berenstein-Zelevinsky quantum $\cA$-variety with principal coefficients by $\mathscr{A}_{\prin, q}$, while writing $\cA_{\prin, q}$ for the quantization of $\cAp$ viewed as a scheme over $\C$.  

Next, recall the lattice maps $p^*:N\to M^\circ $ reviewed in Section~\ref{sec:cluster-scat}.
We show that the two quantizations with coefficients are compatible in the following sense:
\begin{prop}\thlabel{prop:quantum-pstar}
Assume $\mathscr{A}_{\prin, q}$ exists and that the data defining $\mathscr{A}_{\prin, q}$ and $\Xfam_q$ are related as described in the beginning of this section.
After making the identification\footnote{
This identification can be incorporated into the definition of the $*$-algebra homomorphism, writing ${p^*\lrp{q^{\frac{1}{d}}}= q^{-\frac{1}{2}}}$.} $\qfg^{\frac{1}{d}}= \qbz^{-\frac{1}{2}} $,
a $p^*$-map on the level of lattices induces a $*$-algebra homomorphism from the quantum $\cX$-torus algebra over $R_{\qfg}:= \C\lrb{t_i: i\in I}\lrb{\qfg^{\pm \frac{1}{d}}}$ to the quantum $\cA$-torus algebra over $R_{\qbz}:= \C\lrb{t_i: i\in I}\lrb{\qbz^{\pm \frac{1}{2}}}$  that commutes with mutation: 
\eqn{p^*\lrp{\mu_{k,\vb{t};\cham}^q \lrp{\Xt_{i;\cham'}}}=\mu_{k,\vb{t};\cham}^q \lrp{p^*\lrp{\Xt_{i;\cham'}}}.}
That is, there is a map of quantum cluster varieties $p:\mathscr{A}_{\mathrm{prin},q} \to \Xfam_q$ from the Berenstein-Zelevinsky quantum $\cA$-variety with principal coefficients to the Fock-Goncharov quantum $\cX$-variety with principal coefficients in the sense of \cite{BFMNC}.
\end{prop}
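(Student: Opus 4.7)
The plan is to construct the $*$\=/algebra homomorphism explicitly at each chamber $\cham$ and then verify that it intertwines the respective mutations. On generators of the quantum $\cX$-torus with coefficients I would set $p^*_\cham(X^n) \cequal A^{p^*(n)}$, $p^*_\cham(t_i)\cequal t_i$, and $p^*_\cham\lrp{\qfg^{1/d}}\cequal \qbz^{-1/2}$, where $d=\lcm\lrp{d_i:i\in \Iuf}$ and $p^*\colon N\to M^\circ$ is the lattice map recalled in Section~\ref{sec:cluster-scat}; in the principal setting it arises by restricting $p_1^*$ on $\wN=N\oplus M^\circ$ to the first summand. Once this is shown to define a $*$\=/algebra homomorphism of quantum tori, it extends uniquely to the noncommutative fraction fields, giving the claimed map $\sK\lrp{\A^{\lrm{I}}_{M;\cham,q}(R)}\to \sK\lrp{\A^{\lrm{I}}_{M^\circ;\cham,q}(R)}$.

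For well-definedness the only substantive check is compatibility of $q$-commutations. On the $\cX$-side $X^n X^{n'}=\qfg^{\lrc{n,n'}}X^{n+n'}$, while on the $\cA$-side $A^m A^{m'}=\qbz^{\frac12\Lambda(m,m')}A^{m+m'}$. Using the expansion $p^*(e_i)=\sum_j \epsilon_{ij} f_j$ for $i\in \Iuf$, together with the compatible-pair identity $\Bt^T\Lambda=\lrp{D'\ 0}$ and the dictionary $D'=d\,D_{\uf}^{-1}$ discussed just before the proposition, a short bilinear computation yields $\Lambda\lrp{p^*(n),p^*(n')}=-d\,\lrc{n,n'}$. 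Under the identification $\qfg^{1/d}=\qbz^{-1/2}$, this is exactly what is needed to match $\qfg^{\lrc{n,n'}}$ with $\qbz^{\frac12 \Lambda(p^*(n),p^*(n'))}$. The $*$-compatibility is immediate: both involutions fix monomials and the $t_i$, and invert $\qfg$ and $\qbz$ in a manner consistent with the identification.

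For mutation compatibility I would exploit the factorization $\mu^q_{k,\vb{t};\cham}=\mu^\sharp_{k,\vb{t};\cham}\circ \mu'_{k,\vb{t};\cham}$ from Section~\ref{sec:qmut} and handle the two pieces separately. Since $p^*_\cham(X_{k;\cham})=A^{v_{k;\cham}}$ with $v_{k;\cham}=p^*(e_{k;\cham})$, and since $p^*$ fixes the $t_i$ and therefore the ratio $\vb{t}^{[\cv_{k;\cham}]_+}\!/\,\vb{t}^{[-\cv_{k;\cham}]_+}$ appearing inside the modified dilogarithm \eqref{eq:tdilog}, and since the dilogarithm is a formal power series in its argument, conjugation by $\vb{\Psi}_{q_k,\vb{t}}(X_{k;\cham})$ on the $\cX$-side is transported by $p^*_\cham$ to conjugation by $\vb{\Psi}_{q_k,\vb{t}}\lrp{A^{v_{k;\cham}}}$ on the $\cA$-side, which is precisely the dilogarithm conjugation implementing the Berenstein-Zelevinsky mutation at $k$. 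For the monomial part $\mu'$, applying $p^*$ to the lattice mutation \eqref{eq:xmutatelattice} gives $p^*(e_{k;\cham'})=-p^*(e_{k;\cham})$ and $p^*(e_{i;\cham'})=p^*(e_{i;\cham})+[\epsilon_{ik}]_+\,p^*(e_{k;\cham})$ for $i\neq k$, which translates the $\cX$-side change of basis directly into the corresponding monomial identity on the $\cA$-side.

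The main obstacle is bookkeeping rather than conceptual: one must track the $q$-powers that appear on reordering $A^{p^*(n_1)}A^{p^*(n_2)}=\qbz^{\frac12\Lambda(p^*(n_1),p^*(n_2))}A^{p^*(n_1)+p^*(n_2)}$ and match them against the $q$-powers generated by $\mu'$ on the $\cX$-side in \eqref{eq:muprimet}, and one must verify that the coefficient factors $\vb{t}^{[\pm \cv_{k;\cham}]_+}$ emerge as dictated by \eqref{eq:Aqmutprin}. I would handle this by splitting \eqref{eq:qdmutation} into the cases $i=k$ and $i\neq k$ and verifying in each case that both $p^*_\cham\circ \mu^q_{k,\vb{t};\cham}\lrp{\Xt_{i;\cham'}}$ and $\mu^q_{k,\vb{t};\cham}\circ p^*_{\cham'}\lrp{\Xt_{i;\cham'}}$ reduce to the same explicit Laurent expression in $A^{p^*(e_{i;\cham})}$, $A^{v_{k;\cham}}$, $\qbz^{1/2}$, and the $t_i$. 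Setting $\vb{t}=\vb{1}$ recovers the coefficient-free Berenstein-Zelevinsky/Fock-Goncharov comparison, and setting $q=1$ recovers the classical $p$-map intertwining of Fomin-Zelevinsky and \cite{BFMNC}, providing useful sanity checks throughout.
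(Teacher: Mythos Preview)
Your proposal is correct in outline and its concluding strategy---split into $i=k$ and $i\neq k$ and verify both sides reduce to the same explicit Laurent expression---is exactly what the paper does. The paper computes $p^*\lrp{\mu_{k,\vb{t};\cham}^q(\Xt_{i;\cham'})}$ by applying $p^*$ directly to \eqref{eq:qdmutation}, and computes $\mu_{k,\vb{t};\cham}^q\lrp{p^*(\Xt_{i;\cham'})}$ by writing $p^*(\Xt_{i;\cham'})=\At^{\sum_j \epsilon'_{ij} f_{j;\cham'}}$, isolating the $\At_{k;\cham'}^{\epsilon'_{ik}}$ factor, and substituting the Berenstein--Zelevinsky formula \eqref{eq:Aqmutprin}; it then matches the two expressions separately for $\epsilon_{ik}<0$ and $\epsilon_{ik}>0$.

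The main point of comparison is your intermediate factorization strategy. You propose to transport the decomposition $\mu^q=\mu^\sharp\circ\mu'$ through $p^*$ and assert that conjugation by $\vb{\Psi}_{q_k,\vb{t}}\lrp{\At^{v_{k;\cham}}}$ ``is precisely the dilogarithm conjugation implementing the Berenstein--Zelevinsky mutation at $k$.'' This is true, but it is not established anywhere in the paper: the $\cA$-mutation here is \emph{defined} by the closed formula \eqref{eq:Aqmutprin}, not via a dilogarithm. So your factorization argument trades the paper's direct matching for a different unproved lemma of comparable difficulty. The paper sidesteps this by never invoking a dilogarithm description on the $\cA$-side at all. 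Be aware also that the $\epsilon_{ik}>0$ case is more delicate than your sketch suggests: one must handle a \emph{negative} power of the two-term BZ sum, repeatedly commute it past $\At^{-f_{k;\cham'}}$, reindex the resulting product, and use $\sum_{\ell=1}^{r}(2\ell-1)=r^2$ to collapse the accumulated $q$-powers before the two sides visibly agree.
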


\begin{proof}
First, the identification of quantum parameters $\qfg^{\frac{1}{d}}= \qbz^{-\frac{1}{2}}$ comes from the following computation.
In the quantum $\cX$-torus algebra
\eqn{
 \Xt^{e_{i;\cham}} \Xt^{e_{j;\cham}}  =\qfg^{\epsilon_{ij}d_j^{-1}} \Xt^{e_{i;\cham}+e_{j;\cham}},
}
and in the quantum $\cA$-torus algebra
\eqn{
\At^{p^*(e_{i;\cham})}\At^{p^*(e_{j;\cham})} &= \qbz^{\frac{1}{2} \Lambda\lrp{p^*(e_{i;\cham}), p^*(e_{j  ;\cham})}} \At^{p^*(e_{i;\cham}+e_{j;\cham})}\\
&=\qbz^{-\frac{1}{2}\epsilon_{ij}d_j^\vee} \At^{p^*(e_{i;\cham}+e_{j;\cham})}.
}
In light of this, we want to identify $\qfg^{\frac{1}{d_j}}$ with $\qbz^{-\frac{1}{2}d_j^\vee}$ for all $j$, which is accomplished by setting $\qfg^{\frac{1}{d}}= \qbz^{-\frac{1}{2}}$.

Now if $i=k$, 
\eqn{p^*\lrp{\mu_{k,\vb{t};\cham}^q \lrp{\Xt_{i;\cham'}}} = p^*\lrp{\Xt_{k;\cham}^{-1}} = \At^{p^*(-e_{k;\cham})} .}
Similarly, 
\eqn{\mu_{k,\vb{t};\cham}^q \lrp{p^*\lrp{\Xt_{i;\cham'}}} = \mu_{k,\vb{t};\cham}^q \lrp{\At^{p^*(e_{k;\cham'})}} =  \At^{p^*(-e_{k;\cham})} .}
So it holds for $i=k$.

Assume $i\neq k$.  Then
\eq{
p^*\lrp{\mu_{k,\vb{t};\cham}^q \lrp{\Xt_{i;\cham'}}} 
=
\At^{p^*\lrp{e_{i;\cham}}} \lrp{\displaystyle\prod_{\ell=1}^{|\epsilon_{ik}|} \lrp{\vb{t}^{[\sgn{\lrp{\epsilon_{ik}}}\vb{c}_{k;\cham}]_+}+\vb{t}^{[-\sgn{\lrp{\epsilon_{ik}}}\vb{c}_{k;\cham}]_+} \qfg^{\frac{2\ell-1}{d_k}} \At^{p^*\lrp{-\sgn{\lrp{\epsilon_{ik}}} e_{k;\cham} } }}}^{-\sgn{\lrp{\epsilon_{ik}}}}
}{eq:pmu}
and
\eq{
\mu_{k,\vb{t};\cham}^q \lrp{p^*\lrp{\Xt_{i;\cham'}}} =& \mu_{k,\vb{t};\cham}^q  \lrp{ \At^{p^*\lrp{e_{i;\cham'}}}}\\
=& \qbz^{-\epsilon_{ik}' \frac{1}{2} \Lambda \lrp{ p^*\lrp{e_{i;\cham'}}, f_{k;\cham'}   } }  \At^{p^*\lrp{e_{i;\cham'}} - \epsilon_{ik}'f_{k;\cham'}} \mu_{k,\vb{t};\cham}^q  \lrp{\At^{\epsilon_{ik}'f_{k;\cham'}} }\\
=&  \qbz^{-\epsilon_{ik}' \frac{1}{2} \Lambda \lrp{ p^*\lrp{e_{i;\cham'}}, f_{k;\cham'}   } }  \At^{p^*\lrp{e_{i;\cham'}} - \epsilon_{ik}'f_{k;\cham'}}\\
&\lrp{\vb{t}^{\lrb{\cv_{k;\cham}}_+} \At^{-f_{k;\cham} + \sum_{j:\epsilon_{kj>0}} \epsilon_{kj} f_{j;\cham}} + \vb{t}^{\lrb{-\cv_{k;\cham}}_+} \At^{-f_{k;\cham} -  \sum_{j:\epsilon_{kj<0}} \epsilon_{kj} f_{j;\cham}}}^{\epsilon_{ik}'}\\
=& \At^{p^*(e_{i;\cham'})}\At^{-\epsilon_{ik}'f_{k;\cham'}}
\lrp{\vb{t}^{\lrb{\cv_{k;\cham}}_+} \At^{p^*(e_{k;\cham})+f_{k;\cham'}} + \vb{t}^{\lrb{-\cv_{k;\cham}}_+} \At^{f_{k;\cham'}}}^{\epsilon_{ik}'}
.
}{eq:muqkp-star}

The statement follows immediately if $\epsilon_{ik}=0$. Now take $\epsilon_{ik} < 0$. Then \eqref{eq:muqkp-star} becomes
\eq{
\mu_{k,\vb{t};\cham}^q \lrp{p^*\lrp{\Xt_{i;\cham'}}}
=& \At^{p^*(e_{i;\cham})}
    \prod_{\ell=1}^{\lrm{\epsilon_{ik}}}
        \lrp{
            \vb{t}^{\lrb{\cv_{k;\cham}}_+} \qbz^{\lrp{\lrm{\epsilon_{ik}}-\ell} \Lambda\lrp{-f_{k;\cham'} , p^*(e_{k;\cham}) } } \At^{-f_{k;\cham'} }\At^{p^*(e_{k;\cham})+f_{k;\cham'}} 
            + 
            \vb{t}^{\lrb{-\cv_{k;\cham}}_+}
            } \\
=& \At^{p^*(e_{i;\cham})}
    \prod_{\ell=1}^{\lrm{\epsilon_{ik}}}
        \lrp{
            \vb{t}^{\lrb{\cv_{k;\cham}}_+} \qbz^{-\lrp{\lrm{\epsilon_{ik}}-\ell}d_k^\vee } \At^{-f_{k;\cham'} }\At^{p^*(e_{k;\cham})+f_{k;\cham'}} 
            + 
            \vb{t}^{\lrb{-\cv_{k;\cham}}_+}
            } \\
=& \At^{p^*(e_{i;\cham})}
    \prod_{\ell=1}^{\lrm{\epsilon_{ik}}}
        \lrp{
            \vb{t}^{\lrb{\cv_{k;\cham}}_+} \qbz^{-\lrp{\lrm{\epsilon_{ik}}-\ell+\frac{1}{2}}d_k^\vee } \At^{p^*(e_{k;\cham})} 
            + 
            \vb{t}^{\lrb{-\cv_{k;\cham}}_+}
            } \\
=& \At^{p^*(e_{i;\cham})}
    \prod_{\ell=1}^{\lrm{\epsilon_{ik}}}
        \lrp{
            \vb{t}^{\lrb{\cv_{k;\cham}}_+} \qbz^{-\frac{2\ell-1}{2}d_k^\vee } \At^{p^*(e_{k;\cham})} 
            + 
            \vb{t}^{\lrb{-\cv_{k;\cham}}_+}
            }. 
}{eq:muqk-neg}
In the final equality, we have used the fact that all term to the right of the product symbol commute with one another to reorder the product. 

Meanwhile, \eqref{eq:pmu} becomes
\eq{p^*\lrp{\mu_{k,\vb{t};\cham}^q \lrp{\Xt_{i;\cham'}}} 
=
\At^{p^*\lrp{e_{i;\cham}}} \displaystyle\prod_{\ell=1}^{|\epsilon_{ik}|} \lrp{\vb{t}^{[-\vb{c}_{k;\cham}]_+}+\vb{t}^{[\vb{c}_{k;\cham}]_+} \qfg^{\frac{2\ell-1}{d_k}} \At^{p^*\lrp{ e_{k;\cham} } }}.}{eq:pmuneg}
After making the substitution $ \qfg^{\frac{1}{d}}= \qbz^{-\frac{1}{2}}$, we find
\eqn{p^*\lrp{\mu_{k,\vb{t};\cham}^q \lrp{\Xt_{i;\cham'}}} 
=
\At^{p^*\lrp{e_{i;\cham}}} \displaystyle\prod_{\ell=1}^{|\epsilon_{ik}|} \lrp{\vb{t}^{[-\vb{c}_{k;\cham}]_+}+\vb{t}^{[\vb{c}_{k;\cham}]_+} \qbz^{-\frac{2\ell-1}{2} d_k^\vee } \At^{p^*\lrp{ e_{k;\cham} } }},}
in agreement with \eqref{eq:muqk-neg}.

Next, take $\epsilon_{ik}>0$. 
Then \eqref{eq:muqkp-star} becomes
\eq{
\mu_{k,\vb{t};\cham}^q \lrp{p^*\lrp{\Xt_{i;\cham'}}}
=& \At^{p^*(e_{i;\cham'})}\lrp{\At^{-\epsilon_{ik}f_{k;\cham'}}}^{-1}
    \prod_{\ell=1}^{\epsilon_{ik} }
    \lrp{\vb{t}^{\lrb{\cv_{k;\cham}}_+} \At^{p^*(e_{k;\cham})+f_{k;\cham'}} + \vb{t}^{\lrb{-\cv_{k;\cham}}_+} \At^{f_{k;\cham'}}}^{-1}\\
=& \At^{p^*(e_{i;\cham'})}
    \lrp{
        \lrp{
            \vb{t}^{\lrb{\cv_{k;\cham}}_+}    
            \At^{p^*(e_{k;\cham})+f_{k;\cham'}} 
            + 
            \vb{t}^{\lrb{-\cv_{k;\cham}}_+} \At^{f_{k;\cham'}}
        } 
        \At^{-\epsilon_{ik}f_{k;\cham'}}}^{-1}\\
&    
    \prod_{\ell=1}^{\epsilon_{ik}-1}
        \lrp{\vb{t}^{\lrb{\cv_{k;\cham}}_+} \At^{p^*(e_{k;\cham})+f_{k;\cham'}} + \vb{t}^{\lrb{-\cv_{k;\cham}}_+} \At^{f_{k;\cham'}}}^{-1}\\
=& \At^{p^*(e_{i;\cham'})}
    \lrp{
        \At^{-\lrp{\epsilon_{ik}-1}f_{k;\cham'}}
        \lrp{
            \vb{t}^{\lrb{\cv_{k;\cham}}_+}  \qbz^{\Lambda\lrp{p^*(e_{k;\cham}), -\lrp{\epsilon_{ik}-\frac{1}{2}}f_{k;\cham'}}}  
            \At^{p^*(e_{k;\cham})} 
            + 
            \vb{t}^{\lrb{-\cv_{k;\cham}}_+}
        } 
        }^{-1}\\
&    
    \prod_{\ell=1}^{\epsilon_{ik}-1}
        \lrp{\vb{t}^{\lrb{\cv_{k;\cham}}_+} \At^{p^*(e_{k;\cham})+f_{k;\cham'}} + \vb{t}^{\lrb{-\cv_{k;\cham}}_+} \At^{f_{k;\cham'}}}^{-1}\\
=& \At^{p^*(e_{i;\cham'})}
        \lrp{
            \vb{t}^{\lrb{\cv_{k;\cham}}_+}  \qbz^{ \lrp{\epsilon_{ik}-\frac{1}{2}}d_k^\vee}  
            \At^{p^*(e_{k;\cham})} 
            + 
            \vb{t}^{\lrb{-\cv_{k;\cham}}_+}
        }^{-1}
        \lrp{\At^{-\lrp{\epsilon_{ik}-1}f_{k;\cham'}}}^{-1}\\
&    
    \prod_{\ell=1}^{\epsilon_{ik}-1}
        \lrp{\vb{t}^{\lrb{\cv_{k;\cham}}_+} \At^{p^*(e_{k;\cham})+f_{k;\cham'}} + \vb{t}^{\lrb{-\cv_{k;\cham}}_+} \At^{f_{k;\cham'}}}^{-1}\\
=& \At^{p^*(e_{i;\cham}+ \epsilon_{ik}e_{k;\cham} )}
    \prod_{\ell=1}^{\epsilon_{ik}}
        \lrp{
            \vb{t}^{\lrb{\cv_{k;\cham}}_+}  \qbz^{ \lrp{\epsilon_{ik}-\lrp{\ell-\frac{1}{2}}}d_k^\vee}  
            \At^{p^*(e_{k;\cham})} 
            + 
            \vb{t}^{\lrb{-\cv_{k;\cham}}_+}
        }^{-1}\\
=& \qbz^{ \frac{1}{2} \epsilon_{ik}^2 d_k^\vee} \At^{p^*(e_{i;\cham})} \At^{\epsilon_{ik} p^*(e_{k;\cham} )}
    \prod_{\ell=1}^{\epsilon_{ik}}
        \lrp{
            \vb{t}^{\lrb{\cv_{k;\cham}}_+}  \qbz^{\frac{2\ell-1}{2}d_k^\vee}  
            \At^{p^*(e_{k;\cham})} 
            + 
            \vb{t}^{\lrb{-\cv_{k;\cham}}_+}
        }^{-1}\\
=& \At^{p^*(e_{i;\cham})} 
    \prod_{\ell=1}^{\epsilon_{ik}}
        \lrp{
            \vb{t}^{\lrb{\cv_{k;\cham}}_+}  
            + 
            \vb{t}^{\lrb{-\cv_{k;\cham}}_+}
            \qbz^{-\frac{2\ell-1}{2}d_k^\vee}  
            \At^{p^*(-e_{k;\cham})} 
        }^{-1}.        
}{eq:muqk-pos}
To write the final equality, we use the identity
$\displaystyle{\sum_{\ell=1}^{r} (2\ell-1) = r^2}$.

Meanwhile, \eqref{eq:pmu} becomes
\eq{
p^*\lrp{\mu_{k,\vb{t};\cham}^q \lrp{\Xt_{i;\cham'}}} 
&=
\At^{p^*\lrp{e_{i;\cham}}} \lrp{\displaystyle\prod_{\ell=1}^{\epsilon_{ik}} \lrp{\vb{t}^{[\vb{c}_{k;\cham}]_+}+\vb{t}^{[-\vb{c}_{k;\cham}]_+} \qfg^{\frac{2\ell-1}{d_k}} \At^{p^*\lrp{-e_{k;\cham} } }}}^{-1}\\
&=
\At^{p^*\lrp{e_{i;\cham}}} \prod_{\ell=1}^{\epsilon_{ik}} \lrp{\vb{t}^{[\vb{c}_{k;\cham}]_+}+\vb{t}^{[-\vb{c}_{k;\cham}]_+} \qfg^{\frac{2\ell-1}{d_k}} \At^{p^*\lrp{-e_{k;\cham} } }}^{-1}.
}{eq:pmupos}
After making the substitution $ \qfg^{\frac{1}{d}}= \qbz^{-\frac{1}{2}}$, we find
\eqn{
p^*\lrp{\mu_{k,\vb{t};\cham}^q \lrp{\Xt_{i;\cham'}}} 
=
\At^{p^*\lrp{e_{i;\cham}}} \prod_{\ell=1}^{\epsilon_{ik}} \lrp{\vb{t}^{[\vb{c}_{k;\cham}]_+}+\vb{t}^{[-\vb{c}_{k;\cham}]_+} \qbz^{-\frac{2\ell-1}{2}d_k^\vee} \At^{p^*\lrp{-e_{k;\cham} } }}^{-1},
}
in agreement with \eqref{eq:muqk-pos}.  This completes the proof.
\end{proof}

\begin{remark}
To compare the Berenstein-Zelevinsky and Fock-Goncharov quantizations in the coefficient-free case rather than the principal coefficient case, we can simply take the coefficients $t_i$ to $1$ in \thref{prop:quantum-pstar}. 
\end{remark}

\begin{remark}
Here we have described how to relate two types of quantum cluster varieties with principal coefficients.
Another interesting direction is to relate a quantum cluster variety with principal coefficients to coefficient-free quantum cluster varieties.
Classically, $\cAp$ is intimately related to both an $\cA$ and $\cX$ cluster variety.
It is a deformation family of $\cA$ over a torus $T_M = \Spec(\C[N])$.
Meanwhile, there is an action of a torus $T_{N^\circ}=\Spec(\C[M^\circ])$ on $\cAp$, with $\cX$ the quotient of $\cAp$ under this $T_{N^\circ}$ action.
See \cite[Appendix~B]{GHKK}.
It is natural to wonder how this picture quantizes, and under what conditions.
This is worked out by Davison and Mandel in \cite[Lemma~4.1]{davison2019strong}.
\end{remark}


\section{Recovering the Poisson structure from the quantum structure}\label{sec:PoisFromQ}

The Poisson structure on $\cX$ can be realized as the semi-classical limit of $\cX_q$'s quantum structure, as described in \cite{FG_cluster_ensembles}.
Let us summarize as follows:

Recall from Section~\ref{sec:FG-q} that $\cX_q$ is a union of non-commutative tori $\mathcal{T}_{N;\seed}$, glued via birational quantum mutation maps $\mu_k^q  : \TT_{N;\seed'} \rightarrow \TT_{N;\seed}$, where $\TT_{N;\seed}$ is the noncommutative fraction field of $\mathcal{T}_{N;\seed}$ (Equation~\eqref{eq:muq}). 
On each $\mathcal{T}_{N;\seed}$ we have the relation
    \eqn{q^{-\{v_1,v_2\}} X^{v_1} X^{v_2} = X^{v_1+v_2}.}
 This induces a Poisson structure on the corresponding $\cX$-torus $\mathcal{T}_{N;\seed}$ by taking the {\it{semi-classical limit}} shown below.
    \eqn{ \lrc{X^{v_1}, X^{v_2}} :=& \lim_{q\to 1}  \frac{X^{v_1} X^{v_2} - X^{v_2}X^{v_1} }{q-1} \\
    =& \lim_{q\to 1}  \frac{q^{\{v_1,v_2\}} - q^{\{v_2,v_1\}} }{q-1} X^{v_1 + v_2}\\
    =& 2 \{v_1,v_2\}X^{v_1 + v_2}.
}

Since $\mu_k^q: \TT_{N; \vb{\seed'}} \to \TT_{N; \vb{\seed}}$ is a $*$\=/algebra homomorphism that restricts to the identity on $\C\lrb{q^{\pm \frac{1}{d}}}$, we have
    \eqn{ {\mu_k^q} \lrp{ \frac{X^{v_1}X^{v_2} -X^{v_2}X^{v_1}}{q-1}  }  =  \frac{ {\mu_k^q}\lrp{X^{v_1}}{\mu_k^q}\lrp{X^{v_2}} - {\mu_k^q}\lrp{X^{v_2}}{\mu_k^q}\lrp{X^{v_1}}}{q-1}  . }    
Since the usual mutation formula can be recovered by taking $q \to 1$, we have
    \eqn{{\mu_k^*} \lrp{ \lrc{X^{v_1},X^{v_2}}_{\seed'} } :=& {\mu_k^*} \lrp{ \lim_{q \to 1} \frac{X^{v_1}X^{v_2} -X^{v_2}X^{v_1}}{q-1}  } \\
    =& \lim_{q \to 1}  {\mu_k^q} \lrp{ \frac{X^{v_1}X^{v_2} -X^{v_2}X^{v_1}}{q-1}  }\\  
    =& \lim_{q \to 1} \frac{ {\mu_k^q}\lrp{X^{v_1}}{\mu_k^q}\lrp{X^{v_2}} - {\mu_k^q}\lrp{X^{v_2}}{\mu_k^q}\lrp{X^{v_1}}}{q-1}\\
    =&:  \lrc{{\mu_k^*}\lrp{X^{v_1}},{\mu_k^*}\lrp{X^{v_2}}}_{\seed} . }
Thus, the Poisson structures on tori $\mathcal{T}_{N; \seed}$ patch together giving a global Poisson structure on $\cX$.   

In this section we extend the above argument to $\Xfsp$ and its fibers $\Xsp_{\vb{t}}$.
We start by giving the key definitions.
In the following definitions, $A$ is a commutative ring with $1$.
\begin{definition}
Let $\mathcal{F}$ be a sheaf of $A$-algebras on $X$.
We say that $\mathcal{F}$ is an {\it{$A$-Poisson sheaf}} if for each open set $U \subset X$,
we can endow $\mathcal{F}\lrp{U}$ with an $A$-bilinear Poisson bracket $\lrc{ \cdot , \cdot }_U$, with the Poisson brackets satisfying the following compatibility property:
\begin{center}
 If $V \subset U$ and $f,g \in \mathcal{F}\lrp{U}$, then $\lrc{f|_{V},g|_{V} }_V = \left.\lrc{f,g}_U\right|_V$.
\end{center}
\end{definition}
\begin{definition}
Let $X$ be a scheme over $A$.
We say $X$ is a {\it{Poisson scheme over $A$}} if $\ssO_X$ is an $A$-Poisson sheaf. 
\end{definition}
\begin{prop} \thlabel{PFam}
$\Xfsp$ is a Poisson scheme over $R$, with the Poisson structure inherited from the Poisson structure on affine patches 
$\A^{\lrm{I}}_{M;\cham}\lrp{R}:= \Spec\lrp{R\lrb{\Xt_{i;\cham}: i\in I}}$:
\eqn{\lrc{\Xt^{v_1}_{\cham} , \Xt^{v_2}_{\cham} }_{\cham} := 2 \lrc{ v_{1} , v_{2} } \Xt^{v_1 + v_2}_{\cham}. }
\end{prop}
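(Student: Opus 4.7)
The plan is to import the semi-classical limit argument reproduced at the start of Section~\ref{sec:PoisFromQ} into the coefficient setting. All of the machinery has been built up to make this work: the quantum algebras $A_\cham$ and their fraction fields $\sK(\A^{|I|}_{M;\cham,q}(R))$ are defined over the base ring $R[q^{\pm 1/d}]$, the quantum mutation $\mu^q_{k,\vb{t};\cham}$ of \thref{prop:qdmutation} is a $*$-algebra homomorphism by \thref{prop:*hom} which fixes $R[q^{\pm 1/d}]$, and by \thref{cor:recover} taking $q\to 1$ recovers the classical $\cX$-mutation with coefficients \eqref{eq:Xfammu} that defines the gluing of $\Xfsp$.

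The first step is to observe that on each affine patch $\A^{|I|}_{M;\cham,q}(R) = \Spec(R[\Xt_{i;\cham}:i\in I])$ the bracket $\{\Xt^{v_1}_{\cham},\Xt^{v_2}_{\cham}\}_{\cham} := 2\{v_1,v_2\}\Xt^{v_1+v_2}_{\cham}$ is realized as the semi-classical limit
\eqn{\{\Xt^{v_1}_{\cham},\Xt^{v_2}_{\cham}\}_{\cham} = \lim_{q\to 1} \frac{\Xt^{v_1}_{\cham}\Xt^{v_2}_{\cham} - \Xt^{v_2}_{\cham}\Xt^{v_1}_{\cham}}{q-1} = \lim_{q\to 1}\frac{q^{\{v_1,v_2\}}-q^{\{v_2,v_1\}}}{q-1}\,\Xt^{v_1+v_2}_{\cham},}
exactly as in the coefficient-free case. $R$-bilinearity is automatic from the definition; the Leibniz and Jacobi identities follow formally from the associativity of multiplication in $A_\cham$ combined with the fact that the commutator bracket on $A_\cham$ satisfies these identities and that taking $\lim_{q\to 1}(\cdot)/(q-1)$ preserves them.

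Second, I would show the local brackets glue under the gluing maps $\mu^*_{k,\vb{t};\cham}$ defining $\Xfsp$. Since $\mu^q_{k,\vb{t};\cham}$ is an $R[q^{\pm 1/d}]$-algebra $*$-homomorphism, for any $v_1,v_2$ we have
\eqn{\mu^q_{k,\vb{t};\cham}\!\left(\frac{\Xt^{v_1}_{\cham'}\Xt^{v_2}_{\cham'} - \Xt^{v_2}_{\cham'}\Xt^{v_1}_{\cham'}}{q-1}\right) = \frac{\mu^q_{k,\vb{t};\cham}(\Xt^{v_1}_{\cham'})\,\mu^q_{k,\vb{t};\cham}(\Xt^{v_2}_{\cham'}) - \mu^q_{k,\vb{t};\cham}(\Xt^{v_2}_{\cham'})\,\mu^q_{k,\vb{t};\cham}(\Xt^{v_1}_{\cham'})}{q-1}.}
Taking $q\to 1$ of both sides, using \thref{cor:recover} to identify $\lim_{q\to 1}\mu^q_{k,\vb{t};\cham}$ with $\mu^*_{k,\vb{t};\cham}$, and recognizing both sides as semi-classical limits, yields
\eqn{\mu^*_{k,\vb{t};\cham}\bigl(\{\Xt^{v_1}_{\cham'},\Xt^{v_2}_{\cham'}\}_{\cham'}\bigr) = \{\mu^*_{k,\vb{t};\cham}(\Xt^{v_1}_{\cham'}),\mu^*_{k,\vb{t};\cham}(\Xt^{v_2}_{\cham'})\}_{\cham},}
i.e.\ the mutation maps are Poisson. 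Since $\Xfsp$ is covered by the charts $\A^{|I|}_{M;\cham}(R)$ glued along the Zariski-open loci where the $\mu^*_{k,\vb{t};\cham}$ are defined, the Poisson brackets patch to a global $R$-Poisson structure on $\ssO_{\Xfsp}$.

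The main obstacle is justifying that the semi-classical limit genuinely commutes with application of $\mu^q_{k,\vb{t};\cham}$. Unwinding \eqref{eq:qdmutation}, $\mu^q_{k,\vb{t};\cham}(\Xt_{i;\cham'})$ is a Laurent expression in cluster variables whose coefficients are Laurent polynomials in $q^{1/d}$ over $R$, so each $\mu^q_{k,\vb{t};\cham}(\Xt^{v}_{\cham'})$ admits a well-defined $q\to 1$ specialization which agrees with the classical formula by \thref{cor:recover}; then one checks that the commutator expression is divisible by $q-1$ in the appropriate localization so that dividing by $q-1$ and specializing commute. This is the same verification (term by term in the cluster variables, with coefficients now in $R$) that underlies the coefficient-free case recalled before the proposition, and it goes through unchanged since the coefficients $t_i$ lie in the base ring $R$ and are unaffected by the limit.
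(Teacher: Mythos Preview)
Your proposal is correct and follows essentially the same approach as the paper: both invoke \thref{prop:*hom} to know $\mu^q_{k,\vb{t};\cham}$ is a $*$-algebra homomorphism fixing $R[q^{\pm 1/d}]$, compute the semi-classical limit $\lim_{q\to 1}\frac{[\,\cdot\,,\,\cdot\,]}{q-1}$ on each patch exactly as in the coefficient-free case, and then use \thref{cor:recover} to pass the limit through $\mu^q$ and conclude the brackets glue. Your discussion of the Leibniz/Jacobi identities and of divisibility by $q-1$ is slightly more explicit than the paper's, but the argument is the same.
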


\begin{proof}
By definition, $\Xfsp_q$ is a union of non-commutative affine $R$-schemes $\A_{M;\cham,q}^{\lrm{I}}$, 
each corresponding to a ring $A_\cham= R[q^{\pm \frac{1}{d}}]\lra{\tilde{X}_{\cham}^n: n \in N }/\sim$ where the variables satisfy the $q$-commutation relations given by 
\eqn{ \tilde{X}_{\cham}^n \tilde{X}_{\cham}^{n'} = q^{\lrc{n,n'}} \tilde{X}_{\cham}^{n+n'} .}
These patches are glued via the mutations given in \eqref{eq:qdmutation}. 
By \thref{prop:*hom}, these mutation maps are $*$\=/algebra homomorphisms, and by construction they restrict to the identity on $R[q^{\pm \frac{1}{d}}]$. 
We have precisely the same semi-classical limit as the coefficient free case: 
\eqn{ &\lim_{q \to 1} \frac{\Xt^{v_1}_{\cham} \Xt^{v_2}_{\cham}  - \Xt^{v_2}_{\cham}  \Xt^{v_1}_{\cham} }{q-1}\\
= &\lim_{q \to 1} \frac{ q^{\lrc{v_1,v_2}}  - q^{\lrc{v_2,v_1}}}{q-1} \Xt_{\cham}^{v_1 +v_2} \\= &2 \lrc{v_1,v_2} \Xt_{\cham}^{v_1 +v_2}.}
Next, by \thref{cor:recover} the classical mutation formula with coefficients is recovered as the $q\to 1$ limit of the quantum mutation formula with coefficients.
The argument of the quantum coefficient-free case directly applies to establish that
\eqn{\mu_{k,\vb{t}}^*\lrp{ \lrc{ \Xt^{v_1}_{\cham'} , \Xt^{v_2}_{\cham'} }_{\cham'} } =  \lrc{ \mu_{k,\vb{t}}^*\lrp{ \Xt^{v_1}_{\cham'} }, \mu_{k,\vb{t}}^*\lrp{ \Xt^{v_2}_{\cham'}} }_{\cham} .}  
The $R$-Poisson structures on affine patches glue together giving a global $R$-Poisson structure on $\Xfsp$. 
\end{proof}

An immediate corollary of \thref{PFam} is that the fibers are also Poisson:

\begin{cor}\thlabel{PFib}
Let $\vb{t}$ be a closed point of $\Spec\lrp{R}$. 
Then $\Xsp_{\vb{t}}$ is a Poisson scheme over $\C$.
\end{cor}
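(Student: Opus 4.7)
The proof is an immediate consequence of \thref{PFam}: we simply specialize the $R$-Poisson structure on $\Xfsp$ at the closed point. Fix $\vb{t}=(\vb{t}_i)_{i\in I}\in \Spec(R)$ with corresponding maximal ideal $\mathfrak{m}_{\vb{t}}=(t_i-\vb{t}_i:i\in I)\subset R$. By definition $\Xsp_{\vb{t}}=\Xfsp\times_{\Spec R}\Spec(R/\mathfrak{m}_{\vb{t}})$, so on each affine patch $\A^{\lrm{I}}_{M;\cham}\lrp{R}=\Spec\lrp{R\lrb{\Xt_{1;\cham},\ldots,\Xt_{\lrm{I};\cham}}}$ the fiber amounts to quotienting by the ideal generated by $\lrc{t_i-\vb{t}_i:i\in I}$.

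The first step is to observe that $R$ sits in the Poisson center of $\ssO_{\Xfsp}$. This is forced by the combination of $R$-bilinearity from \thref{PFam} and the Leibniz rule: for any $r\in R$ and local sections $f,g$,
\eqn{
r\lrc{f,g}_U \;=\; \lrc{f,rg}_U \;=\; r\lrc{f,g}_U + g\lrc{f,r}_U,
}
so $g\lrc{f,r}_U=0$ for all $g$. Since each affine patch is an integral domain (the classical analogue of \thref{prop:Noether}, or simply by inspection of $R\lrb{\Xt_{1;\cham},\ldots,\Xt_{\lrm{I};\cham}}$), this forces $\lrc{f,r}_U=0$. Consequently the ideal sheaf $\mathfrak{m}_{\vb{t}}\cdot\ssO_{\Xfsp}$ is a Poisson ideal sheaf.

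The second step is the descent: since $\mathfrak{m}_{\vb{t}}\cdot\ssO_{\Xfsp}$ is a Poisson ideal, the $R$-bilinear Poisson bracket on $\ssO_{\Xfsp}$ passes to a well-defined $\C$-bilinear Poisson bracket on the quotient sheaf $\ssO_{\Xsp_{\vb{t}}}=\ssO_{\Xfsp}/\mathfrak{m}_{\vb{t}}\cdot\ssO_{\Xfsp}$. Explicitly, on the affine patch corresponding to $\cham$ the bracket becomes
\eqn{
\lrc{\Xt^{v_1}_{\cham},\Xt^{v_2}_{\cham}}_{\cham,\vb{t}} = 2\lrc{v_1,v_2}\Xt^{v_1+v_2}_{\cham},
}
i.e.\ the formula of \thref{PFam} specialized at $t_i=\vb{t}_i$; compatibility under mutation is inherited directly from the corresponding compatibility for $\Xfsp$, and the restriction compatibility needed for a Poisson sheaf is inherited from the $R$-Poisson sheaf structure. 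There is no real obstacle in the argument: the Jacobi and Leibniz properties are automatically preserved under the quotient by a Poisson ideal, so the only point requiring (minor) attention is the consistency of this construction across the atlas, which is immediate from the corresponding statement for the family.
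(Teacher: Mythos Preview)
Your argument is correct and is precisely the standard unpacking of why the paper calls this an ``immediate corollary'' of \thref{PFam}: the $R$-bilinearity forces $R$ into the Poisson center, so $\mathfrak{m}_{\vb{t}}\cdot\ssO_{\Xfsp}$ is a Poisson ideal and the bracket descends to the fiber. The paper gives no proof beyond that phrase, so there is nothing further to compare.
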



\begin{appendices}

\section{Direct calculation of the Poisson structure}\label{app:poisson}

It can of course be shown that $\Xfsp$ is an $R$-Poisson scheme without passing through its quantization.
As some readers may appreciate having a low-tech, direct proof of \thref{PFam} for reference, even if said proof is inefficient,
we provide one here.

\begin{proof}
Let $\mu_k: \A^{\lrm{I}}_{M;\cham}\lrp{R} \dashrightarrow \A^{\lrm{I}}_{M;\cham'}\lrp{R}$ be a mutation gluing neighboring affine patches in $\Xfsp$.
To rephrase the statement of the proposition, we have the following commutative diagram:
\begin{equation}\label{CD}
\begin{tikzcd}
    \sK\lrp{\A^{\lrm{I}}_{M;\cham}\lrp{R}} \times \sK\lrp{\A^{\lrm{I}}_{M;\cham}\lrp{R}} \arrow[d,"\lrc{ \cdot , \cdot }_{\cham}"] & \arrow[l,"\lrp{\mu_k^*, \mu_k^*}"'] \sK\lrp{\A^{\lrm{I}}_{M;\cham'}\lrp{R}} \times \sK\lrp{\A^{\lrm{I}}_{M;\cham'}\lrp{R}} \arrow[d,"\lrc{ \cdot , \cdot }_{\cham'}"]\\
     \sK\lrp{\A^{\lrm{I}}_{M;\cham}\lrp{R}} & \arrow[l,"\mu_k^*"] \sK\lrp{\A^{\lrm{I}}_{M;\cham'}\lrp{R}} 
\end{tikzcd}.
\end{equation}
We claim first that this diagram commutes when we restrict to cluster variables $\Xt_{i;\cham'}$.
Assuming this claim, we would have
\eqn{
\lrc{\mu_k^*\lrp{\Xt_{i;\cham'}}, \mu_k^*\lrp{\Xt_{j_1;\cham'}\cdots \Xt_{j_r;\cham'}} }_{\cham} 
=& 
\lrc{\mu_k^*\lrp{\Xt_{i;\cham'}}, \mu_k^*\lrp{\Xt_{j_1;\cham'}}}_{\cham} \mu_k^*\lrp{\Xt_{j_2;\cham'}} \cdots \mu_k^*\lrp{\Xt_{j_r;\cham'}}+ \cdots \\
&+
\lrc{\mu_k^*\lrp{\Xt_{i;\cham'}}, \mu_k^*\lrp{\Xt_{j_r;\cham'}}}_{\cham} \mu_k^*\lrp{\Xt_{j_1;\cham'}} \cdots \mu_k^*\lrp{\Xt_{j_{r-1};\cham'}} \\
=& 
\mu_k^*\lrp{\lrc{\Xt_{i;\cham'}, \Xt_{j_1;\cham'}}_{\cham'}} \mu_k^*\lrp{\Xt_{j_2;\cham'}} \cdots \mu_k^*\lrp{\Xt_{j_r;\cham'}}+ \cdots \\
&+
\mu_k^*\lrp{\lrc{\Xt_{i;\cham'}, \Xt_{j_r;\cham'}}_{\cham'}} \mu_k^*\lrp{\Xt_{j_1;\cham'}} \cdots \mu_k^*\lrp{\Xt_{j_{r-1};\cham'}}\\
=&\mu_k^*\lrp{\lrc{\Xt_{i;\cham'}, \Xt_{j_1;\cham'}\cdots \Xt_{j_r;\cham'}}_{\cham'}}.
}
We extend to monomials in both arguments similarly, and use linearity of $\mu_k^*$ and bilinearity of Poisson brackets to extend to polynomials in each argument.
Now suppose we have a pair of rational functions.
\eqn{
\lrc{\mu_k^*\lrp{\frac{f_1}{g_1}}, \mu_k^*\lrp{\frac{f_2}{g_2}} }_{\cham} =& 
\mu_k^*\lrp{\frac{1}{g_1 g_2}}
\lrc{\mu_k^*\lrp{f_1}, \mu_k^*\lrp{f_2} }_{\cham} 
+\mu_k^*\lrp{\frac{f_2}{g_1}}
\lrc{\mu_k^*\lrp{f_1}, \mu_k^*\lrp{\frac{1}{g_2}} }_{\cham} \\
&+\mu_k^*\lrp{\frac{f_1}{g_2}}
\lrc{\mu_k^*\lrp{\frac{1}{g_1}}, \mu_k^*\lrp{f_2} }_{\cham}
+\mu_k^*\lrp{f_1 f_2}
\lrc{\mu_k^*\lrp{\frac{1}{g_1}}, \mu_k^*\lrp{\frac{1}{g_2}} }_{\cham}  
}
To simplify we note that  $ \lrc{f,1} = 0 $, so
\eqn{
0 = \lrc{f,\frac{g}{g}} = \frac{1}{g}\lrc{f,g} + g \lrc{f,\frac{1}{g}} 
}
and
\eqn{
\lrc{f,\frac{1}{g}} = - \frac{1}{g^2} \lrc{f,g}.
}
So,
\eqn{
\lrc{\mu_k^*\lrp{\frac{f_1}{g_1}}, \mu_k^*\lrp{\frac{f_2}{g_2}} }_{\cham} =& 
\mu_k^*\lrp{\frac{1}{g_1 g_2}}
\lrc{\mu_k^*\lrp{f_1}, \mu_k^*\lrp{f_2} }_{\cham} 
+\mu_k^*\lrp{\frac{f_2}{g_1 g_2^2}}
\lrc{\mu_k^*\lrp{f_1}, \mu_k^*\lrp{g_2} }_{\cham} \\
&+\mu_k^*\lrp{\frac{f_1}{g_1^2 g_2}}
\lrc{\mu_k^*\lrp{g_1}, \mu_k^*\lrp{f_2} }_{\cham}
+\mu_k^*\lrp{\frac{f_1 f_2}{g_1^2 g_2^2}}
\lrc{\mu_k^*\lrp{g_1}, \mu_k^*\lrp{g_2} }_{\cham} \\
=&
\mu_k^*\lrp{\frac{1}{g_1 g_2}}
\mu_k^*\lrp{\lrc{f_1, f_2}_{\cham'}} 
+\mu_k^*\lrp{\frac{f_2}{g_1 g_2^2}}
\mu_k^*\lrp{\lrc{f_1, g_2}_{\cham'}} \\
&+\mu_k^*\lrp{\frac{f_1}{g_1^2 g_2}}
\mu_k^*\lrp{\lrc{g_1, f_2}_{\cham'}} 
+\mu_k^*\lrp{\frac{f_1 f_2}{g_1^2 g_2^2}}
\mu_k^*\lrp{\lrc{g_1, g_2}_{\cham'}} \\
=& \mu_k^*\lrp{\lrc{\frac{f_1}{g_1}, \frac{f_2}{g_2}}_{\cham'}}. 
}
This shows that the claim that the diagram commutes for cluster variables implies the proposition.  We now tackle this claim.

Denote the top path $\lrc{\cdot, \cdot}_{\cham} \circ \lrp{\mu_k^*, \mu_k^*}$ by $\tp$ and the bottom path $ \mu_k^* \circ \lrc{\cdot, \cdot}_{\cham'}$ by $\bp$.
Both $\tp\lrp{f,f}$ and $\bp\lrp{f,f}$ are clearly $0$, so assume from now on that the arguments are distinct.

Note that $\lrc{ \cdot, \cdot}_\cham$ induces the Poisson bivector field 
\eqn{\pi_\cham= \sum_{i,j} \lrc{e_{i;\cham}, e_{j;\cham}}\Xt_{i;\cham} \Xt_{j;\cham}\, \partial_{\Xt_{i;\cham}}\wedge \partial_{\Xt_{j;\cham}} }
and $\lrc{f,g}_\cham = \pi_\cham\lrp{df\wedge dg}$.

{\bf{Case 1:}}
Check $\tp\lrp{\Xt_{i;\cham'},\Xt_{k;\cham'} } {=} \bp\lrp{\Xt_{i;\cham'},\Xt_{k;\cham'} }  $
\eqn{
\tp\lrp{\Xt_{i;\cham'},\Xt_{k;\cham'} } =& \pi_\cham \lrp{ \sum_{l,m}{ \pd{\mu_k^*\lrp{\Xt_{i;\cham'}}}{\Xt_{l;\cham}}  \pd{\mu_k^*\lrp{\Xt_{k;\cham'}}}{\Xt_{m;\cham}}     d\Xt_{l;\cham} \wedge d\Xt_{m;\cham}  }}}
We compute the partial derivatives.  If $\epsilon_{ik} \neq 0 $, $ \pd{\mu_k^*\lrp{\Xt_{i;\cham'}}}{\Xt_{l;\cham}} $ is given by
\eqn{ &\pds{\Xt_{i;\cham}\lrp{\vb{t}^{\lrb{\sgn\lrp{\epsilon_{ik}}\cv_{k;\cham}}_+} + \vb{t}^{\lrb{-\sgn\lrp{\epsilon_{ik}}\cv_{k;\cham}}_+}\Xt_{k;\cham}^{-\sgn\lrp{\epsilon_{ik}}}}^{-\epsilon_{ik}}}{\Xt_{l;\cham}} \\
&= \delta_{il} 
\lrp{\vb{t}^{\lrb{\sgn\lrp{\epsilon_{ik}}\cv_{k;\cham}}_+} + \vb{t}^{\lrb{-\sgn\lrp{\epsilon_{ik}}\cv_{k;\cham}}_+}\Xt_{k;\cham}^{-\sgn\lrp{\epsilon_{ik}}}}^{-\epsilon_{ik}}\\
&\phantom{=}+ \delta_{kl} \sgn\lrp{\epsilon_{ik}}\epsilon_{ik} \vb{t}^{\lrb{-\sgn\lrp{\epsilon_{ik}}\cv_{k;\cham}}_+} \Xt_{i;\cham}\Xt_{k;\cham}^{-\sgn\lrp{\epsilon_{ik}}-1} \lrp{\vb{t}^{\lrb{\sgn\lrp{\epsilon_{ik}}\cv_{k;\cham}}_+} + \vb{t}^{\lrb{-\sgn\lrp{\epsilon_{ik}}\cv_{k;\cham}}_+}\Xt_{k;\cham}^{-\sgn\lrp{\epsilon_{ik}}}}^{-\epsilon_{ik}-1}.
}
If $\epsilon_{ik} = 0$, instead we simply get
\eqn{
\pd{\mu_k^*\lrp{\Xt_{i;\cham'}}}{\Xt_{l;\cham}} = \delta_{il}.
}
Meanwhile
\eqn{
\pd{\mu_k^*\lrp{\Xt_{k;\cham'}}}{\Xt_{m;\cham}} = 
\pd{\Xt_{k;\cham}^{-1}}{\Xt_{m;\cham}} = - \delta_{km} \Xt_{k;\cham}^{-2}.
}

{\bf{Subcase 1.a: }} $\epsilon_{ik} \neq 0$

Using $d\Xt_{k;\cham} \wedge d\Xt_{k;\cham} = 0$ we have
\eqn{
\tp\lrp{\Xt_{i;\cham'},\Xt_{k;\cham'}}  =& \pi_{\cham} \lrp{-  \Xt_{k;\cham}^{-2}  \lrp{\vb{t}^{\lrb{\sgn\lrp{\epsilon_{ik}}\cv_{k;\cham}}_+} + \vb{t}^{\lrb{-\sgn\lrp{\epsilon_{ik}}\cv_{k;\cham}}_+}\Xt_{k;\cham}^{-\sgn\lrp{\epsilon_{ik}}}}^{-\epsilon_{ik}} d\Xt_{i;\cham} \wedge d\Xt_{k;\cham}   }\\
=& -\lrc{e_{i;\cham}, e_{k;\cham}} \Xt_{i;\cham} \Xt_{k;\cham} \Xt_{k;\cham}^{-2}  \lrp{\vb{t}^{\lrb{\sgn\lrp{\epsilon_{ik}}\cv_{k;\cham}}_+} + \vb{t}^{\lrb{-\sgn\lrp{\epsilon_{ik}}\cv_{k;\cham}}_+}\Xt_{k;\cham}^{-\sgn\lrp{\epsilon_{ik}}}}^{-\epsilon_{ik}}\\
=& \lrc{e_{i;\cham'}, e_{k;\cham'}} \Xt_{i;\cham} \lrp{\vb{t}^{\lrb{\sgn\lrp{\epsilon_{ik}}\cv_{k;\cham}}_+} + \vb{t}^{\lrb{-\sgn\lrp{\epsilon_{ik}}\cv_{k;\cham}}_+}\Xt_{k;\cham}^{-\sgn\lrp{\epsilon_{ik}}}}^{-\epsilon_{ik}}
\Xt_{k;\cham}^{-1}  \\
=& \bp\lrp{\Xt_{i;\cham'},\Xt_{k;\cham'}}
}

{\bf{Subcase 1.b: }} $\epsilon_{ik} = 0$

Then $\lrc{e_{i;\cham},e_{k;\cham}} =  \lrc{e_{i;\cham'},e_{k;\cham'}} = 0$,
and $\tp\lrp{\Xt_{i;\cham'},\Xt_{k;\cham'}} = \bp\lrp{\Xt_{i;\cham'},\Xt_{k;\cham'}} = 0 $.

{\bf{Case 2:}}
Check $\tp\lrp{\Xt_{i;\cham'},\Xt_{j;\cham'} } = \bp\lrp{\Xt_{i;\cham'},\Xt_{j;\cham'} }, \quad i, j \neq k  $

\eqn{ \tp\lrp{\Xt_{i;\cham'},\Xt_{j;\cham'} } =& \pi_\cham \lrp{ \sum_{l,m}{ \pd{\mu_k^*\lrp{\Xt_{i;\cham'}}}{\Xt_{l;\cham}}  \pd{\mu_k^*\lrp{\Xt_{j;\cham'}}}{\Xt_{m;\cham}}     d\Xt_{l;\cham} \wedge d\Xt_{m;\cham}  }}}
We computed the necessary partial derivatives while addressing the previous case. 

{\bf{Subcase 2.a: }} $\epsilon_{ik}, \epsilon_{jk} \neq 0$

Then $\tp\lrp{\Xt_{i;\cham'},\Xt_{j;\cham'} }$ is given by
\eqn{ &\lrc{e_{i;\cham}, e_{j;\cham}} \Xt_{i;\cham} \Xt_{j;\cham} \lrp{\vb{t}^{\lrb{\sgn\lrp{\epsilon_{ik}}\cv_{k;\cham}}_+} + \vb{t}^{\lrb{-\sgn\lrp{\epsilon_{ik}}\cv_{k;\cham}}_+}\Xt_{k;\cham}^{-\sgn\lrp{\epsilon_{ik}}}}^{-\epsilon_{ik}} \\
&\times \lrp{\vb{t}^{\lrb{\sgn\lrp{\epsilon_{jk}}\cv_{k;\cham}}_+} + \vb{t}^{\lrb{-\sgn\lrp{\epsilon_{jk}}\cv_{k;\cham}}_+}\Xt_{k;\cham}^{-\sgn\lrp{\epsilon_{jk}}}}^{-\epsilon_{jk}}\\
+& \lrc{e_{i;\cham}, e_{k;\cham}}\Xt_{i;\cham} \Xt_{j;\cham} \Xt_{k;\cham}^{-\sgn\lrp{\epsilon_{jk}}} \lrp{\vb{t}^{\lrb{\sgn\lrp{\epsilon_{ik}}\cv_{k;\cham}}_+} + \vb{t}^{\lrb{-\sgn\lrp{\epsilon_{ik}}\cv_{k;\cham}}_+}\Xt_{k;\cham}^{-\sgn\lrp{\epsilon_{ik}}}}^{-\epsilon_{ik}}\\ 
&\times \lrm{\epsilon_{jk} } \vb{t}^{\lrb{-\sgn\lrp{\epsilon_{jk}}\cv_{k;\cham}}_+} \lrp{\vb{t}^{\lrb{\sgn\lrp{\epsilon_{jk}}\cv_{k;\cham}}_+} + \vb{t}^{\lrb{-\sgn\lrp{\epsilon_{jk}}\cv_{k;\cham}}_+}\Xt_{k;\cham}^{-\sgn\lrp{\epsilon_{jk}}}}^{-\epsilon_{jk}-1}\\
+& \lrc{e_{k;\cham}, e_{j;\cham}}\Xt_{i;\cham}\Xt_{j;\cham}\Xt_{k;\cham}^{-\sgn\lrp{\epsilon_{ik}}} 
\lrm{\epsilon_{ik} } \vb{t}^{\lrb{-\sgn\lrp{\epsilon_{ik}}\cv_{k;\cham}}_+} \lrp{\vb{t}^{\lrb{\sgn\lrp{\epsilon_{ik}}\cv_{k;\cham}}_+} + \vb{t}^{\lrb{-\sgn\lrp{\epsilon_{ik}}\cv_{k;\cham}}_+}\Xt_{k;\cham}^{-\sgn\lrp{\epsilon_{ik}}}}^{-\epsilon_{ik}-1}\\
&\times \lrp{\vb{t}^{\lrb{\sgn\lrp{\epsilon_{jk}}\cv_{k;\cham}}_+} + \vb{t}^{\lrb{-\sgn\lrp{\epsilon_{jk}}\cv_{k;\cham}}_+}\Xt_{k;\cham}^{-\sgn\lrp{\epsilon_{jk}}}}^{-\epsilon_{jk}}
}
This simplifies as follows.
\eqn{
&\Xt_{i;\cham}\lrp{\vb{t}^{\lrb{\sgn\lrp{\epsilon_{ik}}\cv_{k;\cham}}_+} + \vb{t}^{\lrb{-\sgn\lrp{\epsilon_{ik}}\cv_{k;\cham}}_+}\Xt_{k;\cham}^{-\sgn\lrp{\epsilon_{ik}}}}^{-\epsilon_{ik}}
\Xt_{j;\cham}\lrp{\vb{t}^{\lrb{\sgn\lrp{\epsilon_{jk}}\cv_{k;\cham}}_+} + \vb{t}^{\lrb{-\sgn\lrp{\epsilon_{jk}}\cv_{k;\cham}}_+}\Xt_{k;\cham}^{-\sgn\lrp{\epsilon_{jk}}}}^{-\epsilon_{jk}}\\
&\times
\bigg( 
\lrc{e_{i;\cham}, e_{j;\cham}} \\
&\phantom{\times \bigg(} \negphantom{+}
+\lrc{e_{i;\cham}, e_{k;\cham}}
\lrm{\epsilon_{jk} } \Xt_{k;\cham}^{-\sgn\lrp{\epsilon_{jk}}}  \vb{t}^{\lrb{-\sgn\lrp{\epsilon_{jk}}\cv_{k;\cham}}_+} \lrp{\vb{t}^{\lrb{\sgn\lrp{\epsilon_{jk}}\cv_{k;\cham}}_+} + \vb{t}^{\lrb{-\sgn\lrp{\epsilon_{jk}}\cv_{k;\cham}}_+}\Xt_{k;\cham}^{-\sgn\lrp{\epsilon_{jk}}}}^{-1}\\
&\phantom{\times \bigg(}\left. \negphantom{+}+
\lrc{e_{k;\cham}, e_{j;\cham}}  
\lrm{\epsilon_{ik} } \Xt_{k;\cham}^{-\sgn\lrp{\epsilon_{ik}}} 
\vb{t}^{\lrb{-\sgn\lrp{\epsilon_{ik}}\cv_{k;\cham}}_+} \lrp{\vb{t}^{\lrb{\sgn\lrp{\epsilon_{ik}}\cv_{k;\cham}}_+} + \vb{t}^{\lrb{-\sgn\lrp{\epsilon_{ik}}\cv_{k;\cham}}_+}\Xt_{k;\cham}^{-\sgn\lrp{\epsilon_{ik}}}}^{-1}
\right)
}
The top line of this expression is just $\mu_k^*\lrp{\Xt_{i;\cham'}} \mu_k^*\lrp{\Xt_{j;\cham'}}$.  
So, we would like to show that the three-line expression in parenthesis reduces to 
\eqn{
\lrc{e_{i;\cham'}, e_{j;\cham'}}
&= 
\lrc{e_{i;\cham}+ \lrb{\epsilon_{ik}}_+e_{k;\cham}, e_{j;\cham}+\lrb{\epsilon_{jk}}_+e_{k;\cham}}\\
&=\lrc{e_{i;\cham}, e_{j;\cham}}
+\lrc{e_{i;\cham},e_{k;\cham}}\lrb{\epsilon_{jk}}_+
+\lrc{e_{k;\cham}, e_{j;\cham}}\lrb{\epsilon_{ik}}_+
.
}
That is, we would like to see that
\eqn{
&\lrc{e_{i;\cham}, e_{k;\cham}}
\lrm{\epsilon_{jk} } \Xt_{k;\cham}^{-\sgn\lrp{\epsilon_{jk}}}  \vb{t}^{\lrb{-\sgn\lrp{\epsilon_{jk}}\cv_{k;\cham}}_+} \lrp{\vb{t}^{\lrb{\sgn\lrp{\epsilon_{jk}}\cv_{k;\cham}}_+} + \vb{t}^{\lrb{-\sgn\lrp{\epsilon_{jk}}\cv_{k;\cham}}_+}\Xt_{k;\cham}^{-\sgn\lrp{\epsilon_{jk}}}}^{-1}\\
+&\lrc{e_{k;\cham}, e_{j;\cham}}  
\lrm{\epsilon_{ik} } \Xt_{k;\cham}^{-\sgn\lrp{\epsilon_{ik}}} 
\vb{t}^{\lrb{-\sgn\lrp{\epsilon_{ik}}\cv_{k;\cham}}_+} \lrp{\vb{t}^{\lrb{\sgn\lrp{\epsilon_{ik}}\cv_{k;\cham}}_+} + \vb{t}^{\lrb{-\sgn\lrp{\epsilon_{ik}}\cv_{k;\cham}}_+}\Xt_{k;\cham}^{-\sgn\lrp{\epsilon_{ik}}}}^{-1}
}
is just an overly complicated way to write
$\lrc{e_{i;\cham},e_{k;\cham}}\lrb{\epsilon_{jk}}_+
+\lrc{e_{k;\cham}, e_{j;\cham}}\lrb{\epsilon_{ik}}_+$.

If $\sgn\lrp{\epsilon_{ik}} = \sgn\lrp{\epsilon_{jk}} =: \sigma $, we have the following simplification.
\eqn{
&\Xt_{k;\cham}^{-\sigma}  \vb{t}^{\lrb{-\sigma\cv_{k;\cham}}_+} \lrp{\vb{t}^{\lrb{\sigma\cv_{k;\cham}}_+} + \vb{t}^{\lrb{-\sigma\cv_{k;\cham}}_+}\Xt_{k;\cham}^{-\sigma}}^{-1}
\lrp{ \lrc{e_{i;\cham}, e_{k;\cham}}
\lrm{\epsilon_{jk} }  + \lrc{e_{k;\cham}, e_{j;\cham}}  
\lrm{\epsilon_{ik} } }\\
=& \Xt_{k;\cham}^{-\sigma}  \vb{t}^{\lrb{-\sigma\cv_{k;\cham}}_+} \lrp{\vb{t}^{\lrb{\sigma\cv_{k;\cham}}_+} + \vb{t}^{\lrb{-\sigma\cv_{k;\cham}}_+}\Xt_{k;\cham}^{-\sigma}}^{-1}
\lrp{ \lrc{e_{i;\cham}, e_{k;\cham}}
\lrm{\lrc{e_{j;\cham}, e_{k;\cham}} d_k }  
+ 
\lrc{e_{k;\cham}, e_{j;\cham}} 
\lrm{\lrc{e_{i;\cham}, e_{k;\cham}} d_k} }\\
=& \sigma d_k \Xt_{k;\cham}^{-\sigma}  \vb{t}^{\lrb{-\sigma\cv_{k;\cham}}_+} \lrp{\vb{t}^{\lrb{\sigma\cv_{k;\cham}}_+} + \vb{t}^{\lrb{-\sigma\cv_{k;\cham}}_+}\Xt_{k;\cham}^{-\sigma}}^{-1}
\lrp{  \lrc{e_{i;\cham}, e_{k;\cham}}
\lrc{e_{j;\cham}, e_{k;\cham}}
- 
\lrc{e_{j;\cham}, e_{k;\cham}} \lrc{e_{i;\cham}, e_{k;\cham}} }\\
=& 0
}
Likewise,
\eqn{ \lrc{e_{i;\cham},e_{k;\cham}}\lrb{\epsilon_{jk}}_+
+\lrc{e_{k;\cham}, e_{j;\cham}}\lrb{\epsilon_{ik}}_+ &=
\lrc{e_{i;\cham},e_{k;\cham}}\lrb{\lrc{e_{j;\cham},e_{k;\cham}}d_k}_+
+\lrc{e_{k;\cham}, e_{j;\cham}}\lrb{\lrc{e_{i;\cham},e_{k;\cham}}d_k}_+\\
&=
d_k \lrc{e_{i;\cham},e_{k;\cham}}\lrc{e_{j;\cham},e_{k;\cham}}\lrp{\lrb{\sigma}_+ -\lrb{\sigma}_+}\\
&= 0.
} 

On the other hand, if $\sgn\lrp{\epsilon_{ik}} = -\sgn\lrp{\epsilon_{jk}} =: \sigma $, we have
\eqn{&\lrc{e_{i;\cham}, e_{k;\cham}}
\lrm{\epsilon_{jk} } \Xt_{k;\cham}^{\sigma}  \vb{t}^{\lrb{\sigma\cv_{k;\cham}}_+} \lrp{\vb{t}^{\lrb{-\sigma\cv_{k;\cham}}_+} + \vb{t}^{\lrb{\sigma\cv_{k;\cham}}_+}\Xt_{k;\cham}^{\sigma}}^{-1}\\
+&\lrc{e_{k;\cham}, e_{j;\cham}}  
\lrm{\epsilon_{ik} } \Xt_{k;\cham}^{-\sigma} 
\vb{t}^{\lrb{-\sigma\cv_{k;\cham}}_+} \lrp{\vb{t}^{\lrb{\sigma\cv_{k;\cham}}_+} + \vb{t}^{\lrb{-\sigma\cv_{k;\cham}}_+}\Xt_{k;\cham}^{-\sigma}}^{-1}\\
=& \bigg(
-\sigma d_k \lrc{e_{i;\cham}, e_{k;\cham}}
\lrc{e_{j;\cham}, e_{k;\cham}}  \Xt_{k;\cham}^{\sigma}  \vb{t}^{\lrb{\sigma\cv_{k;\cham}}_+}
\lrp{\vb{t}^{\lrb{\sigma\cv_{k;\cham}}_+} + \vb{t}^{\lrb{-\sigma\cv_{k;\cham}}_+}\Xt_{k;\cham}^{-\sigma}}\\
&+
\sigma d_k \lrc{e_{k;\cham}, e_{j;\cham}}  
\lrc{e_{i;\cham}, e_{k;\cham}} \Xt_{k;\cham}^{-\sigma} 
\vb{t}^{\lrb{-\sigma\cv_{k;\cham}}_+}
\lrp{\vb{t}^{\lrb{-\sigma\cv_{k;\cham}}_+} + \vb{t}^{\lrb{\sigma\cv_{k;\cham}}_+}\Xt_{k;\cham}^{\sigma}} \bigg)\\
&\times 
\lrp{\vb{t}^{\lrb{\sigma\cv_{k;\cham}}_+} + \vb{t}^{\lrb{-\sigma\cv_{k;\cham}}_+}\Xt_{k;\cham}^{-\sigma}}^{-1}
\lrp{\vb{t}^{\lrb{-\sigma\cv_{k;\cham}}_+} + \vb{t}^{\lrb{\sigma\cv_{k;\cham}}_+}\Xt_{k;\cham}^{\sigma}}^{-1} \\
=& -\sigma d_k \lrc{e_{i;\cham}, e_{k;\cham}}
\lrc{e_{j;\cham}, e_{k;\cham}} 
\frac{
2 \vb{t}^{\lrb{-\sigma\cv_{k;\cham}}_+}\vb{t}^{\lrb{-\sigma\cv_{k;\cham}}_+}
+ \vb{t}^{2 \lrb{-\sigma\cv_{k;\cham}}_+} \Xt_{k;\cham}^{-\sigma}
+ \vb{t}^{2 \lrb{\sigma\cv_{k;\cham}}_+} \Xt_{k;\cham}^{\sigma}
}{
2 \vb{t}^{\lrb{-\sigma\cv_{k;\cham}}_+}\vb{t}^{\lrb{-\sigma\cv_{k;\cham}}_+}
+ \vb{t}^{2 \lrb{-\sigma\cv_{k;\cham}}_+} \Xt_{k;\cham}^{-\sigma}
+ \vb{t}^{2 \lrb{\sigma\cv_{k;\cham}}_+} \Xt_{k;\cham}^{\sigma}
}\\
=& -\sigma d_k \lrc{e_{i;\cham}, e_{k;\cham}}
\lrc{e_{j;\cham}, e_{k;\cham}} 
}
Similarly,
\eqn{ \lrc{e_{i;\cham},e_{k;\cham}}\lrb{\epsilon_{jk}}_+
+\lrc{e_{k;\cham}, e_{j;\cham}}\lrb{\epsilon_{ik}}_+ &=
\lrc{e_{i;\cham},e_{k;\cham}}\lrb{\lrc{e_{j;\cham},e_{k;\cham}}d_k}_+
+\lrc{e_{k;\cham}, e_{j;\cham}}\lrb{\lrc{e_{i;\cham},e_{k;\cham}}d_k}_+\\
&=
d_k \lrc{e_{i;\cham},e_{k;\cham}}\lrc{e_{j;\cham},e_{k;\cham}}\lrp{\lrb{-\sigma}_+ -\lrb{\sigma}_+}\\
&= -\sigma d_k \lrc{e_{i;\cham},e_{k;\cham}}\lrc{e_{j;\cham},e_{k;\cham}} .}

{\bf{Subcase 2.b: }} $\epsilon_{ik} = 0$ or $\epsilon_{jk} = 0$

If both are $0$, the result is immediate.  Assume one is non-zero, say $\epsilon_{jk} \neq 0$. 
\eqn{
\tp\lrp{\Xt_{i;\cham'},\Xt_{j;\cham'}} =& 
\lrc{e_{i;\cham},e_{j;\cham}}\Xt_{i;\cham}\Xt_{j;\cham} \lrp{\vb{t}^{\lrb{\sgn\lrp{\epsilon_{jk}}\cv_{k;\cham}}_+} + \vb{t}^{\lrb{-\sgn\lrp{\epsilon_{jk}}\cv_{k;\cham}}_+}\Xt_{k;\cham}^{-\sgn\lrp{\epsilon_{jk}}}}^{-\epsilon_{jk}}\\
&+\lrc{e_{i;\cham},e_{k;\cham}}\Xt_{i;\cham}\Xt_{k;\cham} \lrp{ \cdots }\\
=&\lrc{e_{i;\cham},e_{j;\cham}}\mu_k^*\lrp{\Xt_{i;\cham'}} \mu_k^*\lrp{\Xt_{j;\cham'}}  \qquad \lrp{\text{since } \epsilon_{ik}=0 }\\
=& \lrc{e_{i;\cham'},e_{j;\cham'}}\mu_k^*\lrp{\Xt_{i;\cham'}} \mu_k^*\lrp{\Xt_{j;\cham'}} \\
=& \bp\lrp{\Xt_{i;\cham'},\Xt_{j;\cham'}}
}
This establishes the proposition.
\end{proof}


\section{Computation of \texorpdfstring{$\lrp{\fp_\gamma^1}^{-1}$}{wall crossing automorphism}}\label{app:wallx}
Here we compute $\lrp{\fp_\gamma^1}^{-1} =g_1^{-1} \circ g_2  \circ g_1 \circ g_2^{-1} $ to order 2, meaning we keep terms whose exponent vectors $m$ satisfy $d(m) \leq 2$.
We used this in Section~\ref{sec:non-pos} to compute $\widehat{\scat}_2^{\cA}$.

First note that for $u = u_1 f_1 + u_2 f_2$,

\eq{g_1 (A^u)
=& \lrp{ \prod_{\ell=1}^{\lrm{u_2}} \lrp{1+v^{\sgn(u_2) 2 \lrp{2 \ell -1}} A_1^{2} }^{\sgn(u_2)} } A^u\\
=&
\lrp{ \prod_{\ell = 1 }^{\lrm{u_2}} 
    \lrp{ 1 
        + \sgn(u_2) v^{\sgn(u_2) 2 (2 \ell -1) } A_1^2  
        + \lrb{-\sgn(u_2)}_{+}  v^{\sgn(u_2) 4 (2 \ell -1) } A_1^4
        + \cdots}    
}A^u\\
=&
\Bigg( 1
    + \sgn(u_2) \sum_{\ell=1}^{\lrm{u_2}} v^{\sgn(u_2) 2 (2 \ell -1) } A_1^2 \\ 
& \phantom{\Bigg(} + \bigg(
        \sum_{1\leq a < b \leq \lrm{u_2}} v^{\sgn(u_2) 4 \lrp{ a+b -1}}
        + \lrb{-\sgn(u_2)}_{+} \sum_{\ell=1}^{\lrm{u_2}} v^{\sgn(u_2) 4 (2 \ell -1) }
    \bigg) A_1^4 
    + \cdots
\Bigg)A^u
}
{eq:g1ap}
and
\eq{g_2 (A^u) =&  \lrp{ \prod_{\ell=1}^{\lrm{u_1}} \lrp{1+v^{\sgn(u_1) 3 \lrp{2 \ell -1}} A_2^{-3} }^{\sgn(u_1)} } A^u \\
=&\lrp{ \prod_{\ell = 1 }^{\lrm{u_1}} 
    \lrp{ 1 
        + \sgn(u_1) v^{\sgn(u_1) 3 (2 \ell -1) } A_2^{-3}  
        + \lrb{-\sgn(u_1)}_{+}  v^{\sgn(u_1) 3 (2 \ell -1) } A_2^{-6}
        + \cdots}    
}A^u\\
=&
\Bigg( 1
    + \sgn(u_1) \sum_{\ell=1}^{\lrm{u_1}} v^{\sgn(u_1) 3 (2 \ell -1) } A_2^{-3} \\ 
& \phantom{\Bigg(} + \bigg(
        \sum_{1\leq a < b \leq \lrm{u_1}} v^{\sgn(u_1) 6 \lrp{a+b -1}}
        + \lrb{-\sgn(u_1)}_{+} \sum_{\ell=1}^{\lrm{u_1}} v^{\sgn(u_1) 6 (2 \ell -1) }
    \bigg) A_2^{-6} 
    + \cdots
\Bigg)A^u
. }{eq:g2ap}
Then 
\eq{
g_1^{-1} (A^u) =& 
\Bigg( 1
    - \sgn(u_2) \sum_{\ell=1}^{\lrm{u_2}} v^{\sgn(u_2) 2 (2 \ell -1) } A_1^2 \\ 
& \phantom{\Bigg(} + \bigg(
        \sum_{1\leq a < b \leq \lrm{u_2}} v^{\sgn(u_2) 4 \lrp{a+b -1}}
        + \lrb{\sgn(u_2)}_{+} \sum_{\ell=1}^{\lrm{u_2}} v^{\sgn(u_2) 4 (2 \ell -1) }
    \bigg) A_1^4 
    + \cdots
\Bigg)A^u
}{eq:g1inv}
and
\eq{
g_2^{-1} (A^u) =& 
\Bigg( 1
    - \sgn(u_1) \sum_{\ell=1}^{\lrm{u_1}} v^{\sgn(u_1) 3 (2 \ell -1) } A_2^{-3} \\ 
& \phantom{\Bigg(} + \bigg(
        \sum_{1\leq a < b \leq \lrm{u_1}} v^{\sgn(u_1) 6 \lrp{a+b -1}}
        + \lrb{\sgn(u_1)}_{+} \sum_{\ell=1}^{\lrm{u_1}} v^{\sgn(u_1) 6 (2 \ell -1) }
    \bigg) A_2^{-6} 
    + \cdots
\Bigg)A^u.
}{eq:g2inv}
Using \eqref{eq:g2inv} and \eqref{eq:g1ap}, we compute:
\eqn{
g_1 \circ g_2^{-1} (A^u)
=&
\Bigg( 1
    - \sgn(u_1) \sum_{\ell=1}^{\lrm{u_1}} v^{\sgn(u_1) 3 (2 \ell -1) } \lrp{A_2^{-3} - \lrp{v^{-4} + 1 + v^4 } A^{2 f_1 - 3 f_2} + \cdots }\\ 
& \phantom{\Bigg(} + \bigg(
        \sum_{1\leq a < b \leq \lrm{u_1}} v^{\sgn(u_1) 6 \lrp{a+b -1}}
        + \lrb{\sgn(u_1)}_{+} \sum_{\ell=1}^{\lrm{u_1}} v^{\sgn(u_1) 6 (2 \ell -1) }
    \bigg) A_2^{-6}  
    + \cdots
\Bigg)\\
& \Bigg( 1
    + \sgn(u_2) \sum_{\ell=1}^{\lrm{u_2}} v^{\sgn(u_2) 2 (2 \ell -1) } A_1^2 \\ 
& \phantom{\Bigg(} + \bigg(
        \sum_{1\leq a < b \leq \lrm{u_2}} v^{\sgn(u_2) 4 \lrp{ a+b -1}}
        + \lrb{-\sgn(u_2)}_{+} \sum_{\ell=1}^{\lrm{u_2}} v^{\sgn(u_2) 4 (2 \ell -1) }
    \bigg) A_1^4 
    + \cdots
\Bigg)A^u\\
=&
\Bigg( 1 
    - \sgn(u_1) \sum_{\ell=1}^{\lrm{u_1}} v^{\sgn(u_1) 3 (2 \ell -1) } A_2^{-3} 
    + \sgn(u_2) \sum_{\ell=1}^{\lrm{u_2}} v^{\sgn(u_2) 2 (2 \ell -1) } A_1^2 \\
& \phantom{\Bigg(} + \bigg(
        \sum_{1\leq a < b \leq \lrm{u_1}} v^{\sgn(u_1) 6 \lrp{a+b -1}}
        + \lrb{\sgn(u_1)}_{+} \sum_{\ell=1}^{\lrm{u_1}} v^{\sgn(u_1) 6 (2 \ell -1) }
    \bigg) A_2^{-6}\\
& \phantom{\Bigg(} + \bigg(
        \sum_{1\leq a < b \leq \lrm{u_2}} v^{\sgn(u_2) 4 \lrp{a+b -1}}
        + \lrb{-\sgn(u_2)}_{+} \sum_{\ell=1}^{\lrm{u_2}} v^{\sgn(u_2) 4 (2 \ell -1) }
    \bigg) A_1^4 \\
& \phantom{\Bigg(} + \bigg(
    \sgn(u_1) \lrp{v^{-4} + 1 + v^4 } 
    \sum_{\ell=1}^{\lrm{u_1}} v^{\sgn(u_1) 3 (2 \ell -1) }\\
& \phantom{\Bigg( + \bigg(}   - \sgn(u_1)\sgn(u_2) v^{-6} \sum_{\ell_1=1}^{\lrm{u_1}} \sum_{\ell_2=1}^{\lrm{u_2}} v^{\sgn(u_1) 3 (2 \ell_1 -1) + \sgn(u_2) 2 (2 \ell_2 -1) } \bigg) A^{2 f_1 - 3 f_2}  + \cdots   
\Bigg)A^u.
}
Next, using \eqref{eq:g2ap} we compute:
\eqn{g_2 \circ g_1 \circ g_2^{-1} (A^u) =&
\Bigg( 1 
    - \sgn(u_1) \sum_{\ell=1}^{\lrm{u_1}} v^{\sgn(u_1) 3 (2 \ell -1) } A_2^{-3}\\ 
& \phantom{\Bigg(}  + \sgn(u_2) \sum_{\ell=1}^{\lrm{u_2}} v^{\sgn(u_2) 2 (2 \ell -1) } \lrp{A_1^2 + \lrp{v^{-3} + v^3} A^{2 f_1 - 3 f_2} + \cdots } \\
& \phantom{\Bigg(} + \bigg(
        \sum_{1\leq a < b \leq \lrm{u_1}} v^{\sgn(u_1) 6 \lrp{a+b -1}}
        + \lrb{\sgn(u_1)}_{+} \sum_{\ell=1}^{\lrm{u_1}} v^{\sgn(u_1) 6 (2 \ell -1) }
    \bigg) A_2^{-6}\\
& \phantom{\Bigg(} + \bigg(
        \sum_{1\leq a < b \leq \lrm{u_2}} v^{\sgn(u_2) 4\lrp{a+b -1}}
        + \lrb{-\sgn(u_2)}_{+} \sum_{\ell=1}^{\lrm{u_2}} v^{\sgn(u_2) 4 (2 \ell -1) }
    \bigg) A_1^4 \\
& \phantom{\Bigg(} + \bigg(
    \sgn(u_1) \lrp{v^{-4} +1 + v^4 } 
    \sum_{\ell=1}^{\lrm{u_1}} v^{\sgn(u_1) 3 (2 \ell -1) }\\
& \phantom{\Bigg( + \bigg(}   - \sgn(u_1)\sgn(u_2) v^{-6} \sum_{\ell_1=1}^{\lrm{u_1}} \sum_{\ell_2=1}^{\lrm{u_2}} v^{\sgn(u_1) 3 (2 \ell_1 -1) + \sgn(u_2) 2 (2 \ell_2 -1) } \bigg) A^{2 f_1 - 3 f_2}  + \cdots   
\Bigg)\\
&\Bigg( 1
    + \sgn(u_1) \sum_{\ell=1}^{\lrm{u_1}} v^{\sgn(u_1) 3 (2 \ell -1) } A_2^{-3} \\ 
& \phantom{\Bigg(} + \bigg(
        \sum_{1\leq a < b \leq \lrm{u_1}} v^{\sgn(u_1) 6 \lrp{a+b -1}}
        + \lrb{-\sgn(u_1)}_{+} \sum_{\ell=1}^{\lrm{u_1}} v^{\sgn(u_1) 6 (2 \ell -1) }
    \bigg) A_2^{-6} 
    + \cdots
\Bigg)A^u\\
=&
\Bigg( 1 + \sgn(u_2) \sum_{\ell=1}^{\lrm{u_2}} v^{\sgn(u_2) 2 (2 \ell -1) } A_1^2 \\
&\phantom{\Bigg(} +{\tc{red}{
\bigg(2 \sum_{1\leq a < b \leq \lrm{u_1}} v^{\sgn(u_1) 6 \lrp{ a+b-1}}
        +\sum_{\ell=1}^{\lrm{u_1}} v^{\sgn(u_1) 6 (2 \ell -1) }
 - \sum_{\substack{1\leq a \leq \lrm{u_1}\\ 1\leq b \leq \lrm{u_1} }} v^{\sgn(u_1) 6 \lrp{a+b -1}} \bigg)}}A_2^{-6}\\
&\phantom{\Bigg(} + \bigg(
        \sum_{1\leq a < b \leq \lrm{u_2}} v^{\sgn(u_2) 4\lrp{a+b -1}}
        + \lrb{-\sgn(u_2)}_{+} \sum_{\ell=1}^{\lrm{u_2}} v^{\sgn(u_2) 4 (2 \ell -1) }
    \bigg) A_1^4 \\
& \phantom{\Bigg(} + \bigg(
    \sgn(u_1) \lrp{v^{-4} +1 + v^4 } 
    \sum_{\ell=1}^{\lrm{u_1}} v^{\sgn(u_1) 3 (2 \ell -1) }
    + \sgn(u_2) \lrp{v^{-3} + v^3} \sum_{\ell=1}^{\lrm{u_2}} v^{\sgn(u_2) 2 (2 \ell -1) }  \\
& \phantom{\Bigg( +  \bigg(}   + \sgn(u_1)\sgn(u_2) \lrp{v^6 - v^{-6} }\sum_{\ell_1=1}^{\lrm{u_1}} \sum_{\ell_2=1}^{\lrm{u_2}} v^{\sgn(u_1) 3 (2 \ell_1 -1) + \sgn(u_2) 2 (2 \ell_2 -1) } \bigg) A^{2 f_1 - 3 f_2} \\
&\phantom{\Bigg(} + \cdots\Bigg)A^u.
}
Observe that the {\tc{red}{red}} portion cancels.
\eqn{g_2 \circ g_1 \circ g_2^{-1} (A^u) =&
\Bigg( 1 + \sgn(u_2) \sum_{\ell=1}^{\lrm{u_2}} v^{\sgn(u_2) 2 (2 \ell -1) } A_1^2 \\
&\phantom{\Bigg(}+ \bigg(
        \sum_{1\leq a < b \leq \lrm{u_2}} v^{\sgn(u_2) 4\lrp{a+b -1}}
        + \lrb{-\sgn(u_2)}_{+} \sum_{\ell=1}^{\lrm{u_2}} v^{\sgn(u_2) 4 (2 \ell -1) }
    \bigg) A_1^4 \\
& \phantom{\Bigg(} + \bigg(
    \sgn(u_1) \lrp{v^{-4} +1 + v^4 } 
    \sum_{\ell=1}^{\lrm{u_1}} v^{\sgn(u_1) 3 (2 \ell -1) }
    + \sgn(u_2) \lrp{v^{-3} + v^3} \sum_{\ell=1}^{\lrm{u_2}} v^{\sgn(u_2) 2 (2 \ell -1) }  \\
& \phantom{\Bigg( + \bigg(}   + \sgn(u_1)\sgn(u_2) \lrp{v^6 - v^{-6} }\sum_{\ell_1=1}^{\lrm{u_1}} \sum_{\ell_2=1}^{\lrm{u_2}} v^{\sgn(u_1) 3 (2 \ell_1 -1) + \sgn(u_2) 2 (2 \ell_2 -1) } \bigg) A^{2 f_1 - 3 f_2} \\
&\phantom{\Bigg(} + \cdots\Bigg)A^u.
}
Finally, we use \eqref{eq:g1inv} and compute:
\eqn{ \lrp{\fp_\gamma^1}^{-1}(A^u) =&
\Bigg( 1 + \sgn(u_2) \sum_{\ell=1}^{\lrm{u_2}} v^{\sgn(u_2) 2 (2 \ell -1) } A_1^2  \\
&\phantom{\Bigg(}+ \bigg(
        \sum_{1\leq a < b \leq \lrm{u_2}} v^{\sgn(u_2) 4 \lrp{a+b -1}}
        + \lrb{-\sgn(u_2)}_{+} \sum_{\ell=1}^{\lrm{u_2}} v^{\sgn(u_2) 4 (2 \ell -1) }
    \bigg) A_1^4 \\
& \phantom{\Bigg(} + \bigg(
    \sgn(u_1) \lrp{v^{-4} +1 + v^4 } 
    \sum_{\ell=1}^{\lrm{u_1}} v^{\sgn(u_1) 3 (2 \ell -1) }
    + \sgn(u_2) \lrp{v^{-3} + v^3} \sum_{\ell=1}^{\lrm{u_2}} v^{\sgn(u_2) 2 (2 \ell -1) }  \\
& \phantom{\Bigg( + \bigg(}   + \sgn(u_1)\sgn(u_2) \lrp{v^6 - v^{-6} }\sum_{\ell_1=1}^{\lrm{u_1}} \sum_{\ell_2=1}^{\lrm{u_2}} v^{\sgn(u_1) 3 (2 \ell_1 -1) + \sgn(u_2) 2 (2 \ell_2 -1) } \bigg) A^{2 f_1 - 3 f_2} \\
&\phantom{\Bigg(} + \cdots\Bigg)\\
&\Bigg( 1
    - \sgn(u_2) \sum_{\ell=1}^{\lrm{u_2}} v^{\sgn(u_2) 2 (2 \ell -1) } A_1^2 \\ 
& \phantom{\Bigg(} + \bigg(
        \sum_{1\leq a < b \leq \lrm{u_2}} v^{\sgn(u_2) 4 \lrp{a+b -1}}
        + \lrb{\sgn(u_2)}_{+} \sum_{\ell=1}^{\lrm{u_2}} v^{\sgn(u_2) 4 (2 \ell -1) }
    \bigg) A_1^4 
    + \cdots
\Bigg)A^u\\
=& \Bigg( 1 +{\tc{red}{
\bigg(2 \sum_{1\leq a < b \leq \lrm{u_2}} v^{\sgn(u_2) 4 \lrp{ a+b-1}}
        +\sum_{\ell=1}^{\lrm{u_2}} v^{\sgn(u_2) 4 (2 \ell -1) }
 - \sum_{\substack{1\leq a \leq \lrm{u_2}\\ 1\leq b \leq \lrm{u_2} }} v^{\sgn(u_2) 4 \lrp{a+b -1}} \bigg)}}A_1^{4}\\
& \phantom{\Bigg(} + \bigg(
    \sgn(u_1) \lrp{v^{-4} +1 + v^4 } 
    \sum_{\ell=1}^{\lrm{u_1}} v^{\sgn(u_1) 3 (2 \ell -1) }
    + \sgn(u_2) \lrp{v^{-3} + v^3} \sum_{\ell=1}^{\lrm{u_2}} v^{\sgn(u_2) 2 (2 \ell -1) }  \\
& \phantom{\Bigg( + \bigg(}   + \sgn(u_1)\sgn(u_2) \lrp{v^6 - v^{-6} }\sum_{\ell_1=1}^{\lrm{u_1}} \sum_{\ell_2=1}^{\lrm{u_2}} v^{\sgn(u_1) 3 (2 \ell_1 -1) + \sgn(u_2) 2 (2 \ell_2 -1) } \bigg) A^{2 f_1 - 3 f_2} \\
&\phantom{\Bigg(} + \cdots\Bigg)A^u.}
Once again the \tc{red}{red} portion cancels and we arrive at 
\eqn{ \lrp{\fp_\gamma^1}^{-1}(A^u) =&\Bigg( 1 + \bigg(
    \sgn(u_1) \lrp{v^{-4} +1 + v^4 } 
    \sum_{\ell=1}^{\lrm{u_1}} v^{\sgn(u_1) 3 (2 \ell -1) }
    + \sgn(u_2) \lrp{v^{-3} + v^3} \sum_{\ell=1}^{\lrm{u_2}} v^{\sgn(u_2) 2 (2 \ell -1) }  \\
& \phantom{\Bigg( }   + \sgn(u_1)\sgn(u_2) \lrp{v^6 - v^{-6} }\sum_{\ell_1=1}^{\lrm{u_1}} \sum_{\ell_2=1}^{\lrm{u_2}} v^{\sgn(u_1) 3 (2 \ell_1 -1) + \sgn(u_2) 2 (2 \ell_2 -1) } \bigg) A^{2 f_1 - 3 f_2}  + \cdots\Bigg)A^u.}

\end{appendices}

\subsubsection*{Acknowledgements} 
The authors would like to thank Mark Gross, Dylan Rupel and Elmar Wagner for helpful discussions. 
They would also like to thank the anonymous referee for a very careful review with many helpful suggestions.
The article has improved significantly as a result.

\subsubsection*{Funding}
M. Cheung is supported by NSF grant DMS-1854512, and AMS Simons Travel Grants. J.B. Fr\'ias-Medina was supported by Programa de Becas Posdoctorales 2019, DGAPA, UNAM during the preparation of the first version of this article and by ``Programa de Estancias Posdoctorales por M\'exico Convocatorias 2021 y 2022 de CONACYT" during the revisions.
T. Magee was supported by EPSRC grant EP/P021913/1 during the preparation of the first version of this article and by EPSRC grant EP/V002546/1 during revisions. 

\subsubsection*{Data Availability}
Data sharing not applicable to this article as no datasets were generated or analysed during the current study.

\subsection*{Compliance with Ethical Standards}
\textbf{Conflict of interest.} The authors declare that they have no conflict of interest.

\vspace{4ex}

\noindent{\sc{Man-Wai Cheung\\
Department of Mathematics, Kavli IPMU (WPI), UTIAS, The University of Tokyo, Kashiwa, Japan, 277-8583}}\\
{\it{e-mail:}} \href{mailto:manwai.cheung@ipmu.jp}{manwai.cheung@ipmu.jp} \medskip

\noindent{\sc{Corresponding author: Juan Bosco Fr\'ias-Medina\\
Instituto de F\'isica y Matem\'aticas, Universidad Michoacana de San Nicol\'as de Hidalgo. Edificio C-3, Ciudad Universitaria. Avenida Francisco J. M\'ugica s/n, Colonia Felicitas del R\'io,
C.P. 58040,  Morelia, Mich., Mexico}}\\
{\it{e-mail:}} \href{mailto:juan.frias@umich.mx}{juan.frias@umich.mx}\medskip

\noindent{\sc{Timothy Magee\\
Department of Mathematics, King's College London, Strand, London WC2R 2LS, UK}}\\
{\it{e-mail:}} \href{mailto:timothy.magee@kcl.ac.uk}{timothy.magee@kcl.ac.uk} \medskip

\end{document}